\newtheorem{thm}{Theorem}[section]
\newtheorem{cor}[thm]{Corollary}
\newtheorem{lem}[thm]{Lemma}
\newtheorem{defn}[thm]{Definition}
\newtheorem{remark}[thm]{Remark}
\newtheorem{example}[thm]{Example}
\makeatletter \@addtoreset{equation}{section} \makeatother
\renewcommand{\P}{\mathbb{P}}
\newcommand{\E}{\mathbb{E}}
\newcommand{\R}{\mathbb{R}}
\newcommand{\N}{\mathbb{N}}
\DeclareMathOperator{\Ric}{Ric}
\renewcommand{\d}{\mathrm{d}}
\newcommand{\1}{{\bf 1}}
\newcommand{\eps}{{\varepsilon}}
\title{\large\bf Liouville theorem for \boldmath$V$-harmonic maps under
non-negative \boldmath$(m, V)$-Ricci curvature for non-positive $m$}
\author{Kazuhiro Kuwae\thanks{Department of Applied Mathematics, Fukuoka University,
Fukuoka 814-0180, Japan ({\sf kuwae@}  {\sf fukuoka-u.ac.jp}). Supported in part by JSPS Grant-in-Aid for Scientific Research (KAKENHI) 22H04942 and by funds (No.: 197004 and 215001) from the Central Research Institute of Fukuoka University.},
\ \ \ \
Songzi Li\thanks{School of Mathematics, Renmin University of China, 59, Zhongguancun Da Jie, Beijing, 100872, China ({\sf sli@ruc.edu.cn}). Supported by NSFC No.~11901569 and by fund (No.:2018030249) from Renmin University of China.},
\ \ \ \
Xiang-Dong Li\thanks{Academy of Mathematics and Systems Science, Chinese Academy of Sciences, 55, Zhongguancun East Road, Beijing, 100190, China ({\sf xdli@amt.ac.cn}), and School of Mathematical Sciences, University of Chinese Academy of Sciences, Beijing, 100049, China. Supported by National Key R$\&$D Program of China (No.~2020YF0712700), NSFC No.~11771430, No.~11688101,
and Key Laboratory RCSDS, CAS, No.~2008DP173182.}
\ \ and\ \
Yohei Sakurai\thanks{Department of Mathematics, Saitama University, 255 Shimo-Okubo, Sakura-ku, Saitama-City, Saitama, 338-8570, Japan ({\sf ysakurai@rimath.saitama-u.ac.jp}). Supported in part by JSPS Grant-in-Aid for Scientific Research on Innovative Areas \lq\lq Discrete Geometric Analysis for Materials Design\rq\rq\; 17H06460).
}
}
\date{}
\begin{document}
\maketitle
\begin{abstract}
Let $V$ be a $C^1$-vector field on an $n$-dimensional complete Riemannian manifold $(M, g)$. 
We prove a Liouville theorem for 
$V$-harmonic maps satisfying various growth conditions from complete Riemannian manifolds with  non-negative $(m, V)$-Ricci curvature for $m\in\,[\,-\infty,\,0\,]\,\cup\,[\,n,\,+\infty\,]$ into Cartan-Hadam\-ard manifolds, which extends Cheng's Liouville theorem proved in \cite{SYCheng} for sublinear growth
harmonic maps  from complete Riemannian manifolds with  non-negative Ricci curvature into Cartan-Hadamard manifolds.
We also prove a Liouville theorem for
$V$-harmonic maps  from complete Riemannian manifolds with  non-negative $(m, V)$-Ricci curvature for $m\in\,[\,-\infty,\,0\,]\,\cup\,[\,n,\,+\infty\,]$ into regular geodesic balls of Riemannian manifolds 
with positive upper sectional curvature bound, 
which extends the  
results of 
Hildebrandt-Jost-Wideman~\cite{HildtJostWidemann}
and Choi~\cite{Choi}. 
Our probabilistic proof of Liouville theorem for several growth $V$-harmonic maps into Hadamard manifolds enhances an incomplete argument in \cite{Staff:Liouville}.
Our results 
extend  the results due to Chen-Jost-Qiu~\cite{ChenJostQiu} and Qiu~\cite{Qiu} in the case of $m=+\infty$
on the  Liouville theorem  for  bounded $V$-harmonic maps from complete Riemannian manifolds with non-negative $(\infty, V)$-Ricci curvature into regular geodesic balls of
Riemannian manifolds with positive sectional curvature upper bound. Finally, we establish a connection between the Liouville property of $V$-harmonic maps and the recurrence property of $\Delta_V$-diffusion processes on manifolds.  
Our results are new even in the case $V=\nabla f$ for $f\in C^2(M)$. 
\end{abstract}

{\it Keywords}: $(m, V)$-Ricci curvature, $V$-Laplacian,
 $\Delta_V$-diffusion process, radial process, sublinear growth, $V$-harmonic function, $V$-harmonic maps.
 
{\it Mathematics Subject Classification (2020)}: Primary  53C21, 53C43; Secondary  53C20, 58J65, 60J60.

\section{Statement of Main Theorem}
Let $(M,g,f)$ denote an $n$-dimensional weighted Riemannian manifold, namely, $(M,g)$ is an $n$-dimensional complete Riemannian
manifold, and $f\in C^{\infty}(M)$. {\color{black}{Let ${\sf d}:={\sf d}_g$ denote the Riemannian distance and 
let $B_r(p):=\{x\in M\mid {\sf d}(p,x)<r\}$ denote the open ball with center $p\in M$ and radius $r>0$.}} 
Throughout this paper,
we always assume $n\geq2$. Let
$\Delta_f:=\Delta-\langle\nabla f,\nabla\cdot\rangle$ be a diffusion operator called \emph{$f$-Laplacian} or \emph{Witten Laplacian}, which is
symmetric with respect to the measure $\mu(\d x):=e^{-f(x)}\nu_g(\d x)$, where
$\nu_g$ is the volume measure of $(M,g)$.
For $m\in\,[\,-\infty,\,+\infty\,]$, the {\it $m$-weighted Ricci curvature} for $(M,g,f)$ or $\Delta_f$ is defined as following (\cite{BE1}, \cite{Lich},  \cite{Xdli:Liouville}):
\begin{align*}
\Ric_f^m:=\Ric_g+{\rm Hess}\,f-\frac{\d f\otimes \d f}{m-n},
\end{align*}
where when $m=\pm\infty$,
the last term is interpreted as the limit $0$,
and when $m=n$,
we only consider a constant function $f$, and
set $\Ric_f^n:=\Ric_g$.

Under the classical curvature condition
\begin{align*}
\Ric_f^m\geq Kg
\end{align*}
for $K\in \mathbb{R}$ and $m\in [\,n,\,+\infty\,]$,
comparison geometry has been developed by many authors (see e.g. \cite{BE1},
\cite{BQ}, \cite{FLZ}, \cite{FLL},  \cite{Xdli:Liouville}, \cite{Lo},
\cite{Qi}, \cite{Qian}, \cite{WW},
and references therein). In this case, the parameter $m$ is 
interpreted as an upper bound for dimension $n$.
In recent years,
comparison geometry for the complementary case of $m\in ]-\infty,n\,[$ has begun to be investigated (see e.g., \cite{KL}, \cite{KSa}, \cite{KS}, \cite{Lim}, \cite{LMO:CompaFinsler}, \cite{MRS:2021}, \cite{Mai1}, \cite{Mai2}, \cite{Mineg}, \cite{Oh<0}, \cite{Sak}, \cite{Wy}, \cite{WyYero}).
In these papers, $m$ is considered as a parameter less than $1$ or non-positive, and it is called 
{\it effective dimension}. In this case, $m$ might not seem an upper bound for dimension $n$ if we keep the usual order for real line $\R$. In view of the coincidence ${\Ric}_V^{\infty}={\Ric}_V^{-\infty}$, if we change the order like $n\leq m_1\leq +\infty\text{\;\lq\lq$=$\rq\rq}\!-\infty\leq m_2\leq 1$, then the parameter $m_2$ can be understood as an upper bound for the dimension $n$. 
Wylie-Yeroshkin \cite{WyYero}  introduced a curvature condition.
\begin{align*}
\Ric_f^1\geq (n-1)\kappa e^{-\frac{4f}{n-1}}g
\end{align*}
for $\kappa \in \mathbb{R}$ in view of the study of weighted affine connection,
and obtained a Laplacian comparison theorem, Bonnet-Myers type diameter comparison theorem, Bishop-Gromov type volume comparison theorem, and rigidity results for the equality cases.
The first and the third named authors \cite{KL} have provided a generalized condition
\begin{align*}
\Ric_f^m\geq (n-m)\kappa e^{-\frac{4f}{n-m}}g
\end{align*}
with $m\in\, ]-\infty,\,1\,]$,
and extended the comparison geometric results in \cite{WyYero}.

In~\cite{Yau:Harmonic}, Yau developed the method of gradient estimate and proved the strong or bounded Liouville theorem on complete Riemannian manifolds. More precisely, let $(M,g)$ be  a complete Riemannian manifold with Ricci curvature bounded from below by $-K$, i.e., ${\Ric}_g\geq -Kg$, where $K\geq0$ is a constant, and $u$ be a harmonic function (i.e. a solution to $\Delta u=0$) which is  bounded from below, then the following gradient estimate holds
\begin{align}
|\nabla u|\leq\sqrt{(n-1)K}\left(u-\inf_Mu \right).\label{eq:YauGrad}
\end{align}
Taking $K=0$, the gradient estimate \eqref{eq:YauGrad}
implies the strong or bounded Liouville theorem, namely, every non-negative or bounded harmonic function on a complete Riemannian manifold with non-negative Ricci curvature must be constant.
In \cite{Xdli:Liouville}, the third named author 
extended Yau's gradient estimate to $f$-harmonic functions, namely, let $(M, g, f)$ be  a complete Riemannian manifold with ${\Ric}_f^m\geq -Kg$, where $K\geq 0$ and $m\in[\,n\,,+\infty\,[$ are two constants,   and
 $u$ be a solution to $\Delta_f u=0$ which is bounded from below, then
\begin{align}
|\nabla u|\leq\sqrt{(m-1)K}\left(u-\inf_Mu \right).\label{eq:YauGradWittenLaplacian}
\end{align}
Taking $K=0$, the gradient estimate \eqref{eq:YauGradWittenLaplacian}
implies an extension of Yau's strong or bounded Liouville theorem, namely, every non-negative or bounded $\Delta_f$-harmonic function on a complete Riemannian manifold with non-negative $m$-dimensional Bakry-Emery Ricci curvature (i.e.,
$\Ric_f^m\geq 0$) must be constant.

In the case $m=+\infty$, Brighton \cite{Brighton} proved the following
Liouville Theorem for bounded $\Delta_f$-harmonic functions
as a corollary of gradient estimate for $\Delta_f$-harmonic functions under non-negative lower bounds of ${\Ric}_f^{\infty}$.

\begin{thm}[{\cite[Corollary~2]{Brighton}}]\label{thm:Brighton}
Suppose ${\Ric}_f^{\infty}\geq 0$. If $u$ is a bounded $\Delta_f$-harmonic function, then $u$ is constant.
\end{thm}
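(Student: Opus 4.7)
The plan is to derive a Caccioppoli-type integral inequality for a power of $u$ and pass to a limit using a cutoff argument. Since $u$ is bounded, I may replace $u$ by $u-\inf_M u+1$ and assume $u>0$ and still bounded on $M$. For each $\eps\in(0,1)$ set $v_\eps:=u^\eps$; a direct chain-rule computation together with $\Delta_f u=0$ yields
\begin{equation*}
\Delta_f v_\eps=\eps(\eps-1)\,u^{\eps-2}|\nabla u|^2\leq 0,
\end{equation*}
so each $v_\eps$ is a bounded, positive, $\Delta_f$-superharmonic function on $M$. Notice that the curvature hypothesis $\Ric_f^\infty\geq 0$ is not yet used at this algebraic step.

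Next, for $p\in M$ and $R>0$, choose a cutoff $\ph\in C_c^\infty(M)$ with $0\leq\ph\leq 1$, $\ph\equiv 1$ on $B_R(p)$, $\supp\ph\subset B_{2R}(p)$, and $|\nabla\ph|\leq C/R$. Multiplying the identity $-\Delta_f v_\eps=\eps(1-\eps)u^{\eps-2}|\nabla u|^2$ by $\ph^2$, integrating against $\d\mu:=\e^{-f}\d\nu_g$, and integrating by parts using the $\mu$-symmetry of $\Delta_f$ together with the compact support of $\ph$ gives
\begin{equation*}
\eps(1-\eps)\int_M \ph^2 u^{\eps-2}|\nabla u|^2\,\d\mu
= 2\eps\int_M \ph\, u^{\eps-1}\langle\nabla\ph,\nabla u\rangle\,\d\mu.
\end{equation*}
Cauchy--Schwarz and absorption then yield the Caccioppoli bound
\begin{equation*}
\int_{B_R(p)} u^{\eps-2}|\nabla u|^2\,\d\mu
\leq \frac{4(\sup_M u)^\eps}{(1-\eps)^2\,R^2}\,\mu(B_{2R}(p)),
\end{equation*}
and the aim is to let $R\to\infty$ and then $\eps\to 0^+$ to conclude $|\nabla u|\equiv 0$.

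The main obstacle is that under $\Ric_f^\infty\geq 0$ alone the weighted volume $\mu(B_{2R}(p))$ need not grow polynomially in $R$, since the weight $\e^{-f}$ is not controlled by any Hessian-type bound on $f$; indeed $f$ linear on $\R^n$ produces exponential weighted volume growth while keeping $\Ric_f^\infty\equiv 0$. The cleanest way I see to close the gap is via the Bakry--\'Emery $\Gamma_2$-calculus: $\Ric_f^\infty\geq 0$ yields the pointwise gradient estimate $|\nabla P_t h|\leq P_t|\nabla h|$ for the heat semigroup $(P_t)$ of $\Delta_f$, which in turn gives stochastic completeness of the $\Delta_f$-diffusion, and, combined with the $\mu$-symmetry and the Caccioppoli estimate above, its recurrence. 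A Royden--Grigor'yan type identification of the Liouville property with recurrence then asserts that each bounded $\Delta_f$-superharmonic $v_\eps$ is constant, and letting $\eps\to 0^+$ forces $u$ itself to be constant. This probabilistic route is moreover in keeping with the connection, announced in the abstract, between the Liouville property and recurrence of the $\Delta_V$-diffusion.
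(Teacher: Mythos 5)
There is a genuine gap, and it sits exactly where you located the difficulty. Your algebraic step ($v_\eps=u^\eps$ is a bounded positive $\Delta_f$-superharmonic function) and the Caccioppoli inequality are both correct, but, as you yourself note, the curvature hypothesis has not been used at that point, and the inequality only closes if $\mu(B_{2R}(p))=o(R^2)$, i.e.\ if $(M,\mu)$ is parabolic. Your proposed repair --- that $\Ric_f^{\infty}\geq 0$ together with the gradient commutation $|\nabla P_th|\leq P_t|\nabla h|$ and the Caccioppoli bound yields \emph{recurrence} of the $\Delta_f$-diffusion --- is false. Take $M=\R^n$ with $n\geq 3$ and $f\equiv 0$: then $\Ric_f^{\infty}\equiv 0$, the semigroup commutation holds, the process is stochastically complete, yet Brownian motion is transient and $x\mapsto\min\{1,|x|^{2-n}\}$ is a bounded, nonconstant superharmonic function. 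Since (by the paper's Lemma~\ref{lem:recurrent}) ``every bounded finely $V$-superharmonic function is constant'' is \emph{equivalent} to recurrence, the Royden--Grigor'yan identification you invoke cannot be applied here, and the final step of your argument collapses. (Note also that for this route the exponent $\eps$ buys nothing: the plain Caccioppoli inequality for $u$ itself already proves the bounded Liouville property on parabolic weighted manifolds; the whole content of Brighton's theorem is precisely the non-parabolic case.)

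The hypothesis $\Ric_f^{\infty}\geq 0$ has to enter pointwise through the Bochner formula rather than through volume growth or potential theory. Brighton's own proof applies the Bochner identity \eqref{eq:BochnerIdentity*} to $h=u^{\eps}$ (your same substitution, but used differently): the term $\langle\nabla\Delta_f h,\nabla h\rangle$ is then controlled by $|\nabla h|^2$-quantities, and a Cheng--Yau type cutoff/maximum-principle argument on $B_{2R}(p)$ gives a pointwise bound on $\sup_{B_R(p)}|\nabla h|$ that tends to $0$ as $R\to\infty$. The route indicated in the paper is different again: one quotes the Hamilton-type Harnack inequality \eqref{Li-Harnack} for bounded positive solutions of $\partial_tu=\Delta_fu$, which at $K=0$ reads $|\nabla\log u|^2\leq t^{-1}\log(A/u)$, and lets $t\to\infty$; alternatively the statement is the special case $N=\R$ of Theorem~\ref{thm:Liouville1}. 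Either of these is a valid replacement for your last paragraph; the recurrence argument is not.
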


Theorem \ref{thm:Brighton} can be derived from the Hamilton type Harnack inequality. In \cite{Li-SPA16}, the third named author proved  the Hamilton type dimension free Harnack inequality for  bounded positive solutions to the heat equation of $\Delta_f$. More precisely, let $u$ be a positive solution to the
heat equation $\partial_t u=\Delta_f u$ with $u\leq A$ for some constant $A>0$. Suppose that $\Ric_{f}^{\infty}\geq -Kg$ for some constant $K\geq 0$. Then the following Harnack inequality holds
\begin{eqnarray}
|\nabla \log u|^2\leq {2K\over 1-e^{-2Kt}}\log(A/u),  \label{Li-Harnack}
\end{eqnarray}
which improves Hamilton type dimension free Harnack inequality for  bounded positive solutions to the heat equation of $\Delta_f$, i.e.,
\begin{eqnarray}
|\nabla \log u|^2\leq \left({1\over t}+K\right)\log(A/u). \label{Hamilton-Harnack}
\end{eqnarray}
In particular, when $K=0$, and letting $t\rightarrow \infty$,  we can derived that all  bounded $\Delta_f$-harmonic functions must be constant. That is to say, we can recapture Brighton's result from Harnack inequalities $(\ref{Li-Harnack})$ and
$(\ref{Hamilton-Harnack})$  for the heat equation of
$\Delta_f$. In  \cite{LL-JFA18}, 
the second and third named authors 
proved the Li-Yau-Hamilton
type Harnack inequality for positive solutions to the heat equation $\partial_t u=\Delta_f u$  on complete Riemannian manifolds with  ${\Ric}_f^m\geq -(m-1)Kg$ with 
$m\in[\,n,\,+\infty\,[$.
 In \cite{LL-AJM18}, the second and third named authors proved $(\ref{Li-Harnack})$ for bounded positive solutions to the heat equation $\partial_t u=\Delta_f u$  on complete $(K, \infty)$-super Ricci flow. See also \cite{LL-AJM18, LL-SCM19} for the Li-Yau type and the Li-Yau-Hamilton
type Harnack inequalities for positive solution to the heat equation $\partial_t u=\Delta_f u$ on Ricci flow and variant of super Ricci flows. Note that
Hua \cite{Hua2019} also provided another proof of Theorem \ref{thm:Brighton}.

On the other hand, S.~Y.~Cheng~\cite{SYCheng} generalized Yau's gradient
estimate to harmonic maps. Recall that
a  smooth map $u: M\rightarrow
 N$ between two Riemannian manifolds $(M, g)$ and $(N, g)$ is said to be a \emph{harmonic map} if
$\tau(u)=0$, where $\tau(u)$ is the tension field of $u$ and
$\d u:TM\to TN$ defined by $(\d u)_x:T_xM\to T_{u(x)}N$ is the differential of $u$,
see Section $3$ below or \cite{S-Y} and related references.

\begin{thm}[{\cite[Theorem]{SYCheng}}]\label{thm:Cheng}
Let $(M,g)$ be a complete Riemannian manifold with $\Ric_g\geq -Kg$, $K\geq0$, and $N$ a Cartan-Hadamard manifold, i.e., a complete connected and  simply connected Riemannian manifold having non-positive sectional curvature. Let $u:M\to N$ be a harmonic map. Assume $u(B_a(p))\subset B_b(o)$ for some $p\in M$ and $o\in N$ and some $a,b>0$. Then we have
\begin{align}
\sup_{B_{a/2}(p)}|\d u|^2\leq C_n\cdot\frac{b^4}{a^4}\max\left\{\frac{Ka^4}{\beta},\frac{a^2(1+Ka)}{\beta},\frac{a^2b^2}{\beta^2} \right\},\label{eq:Cheng}
\end{align}
where $C_n$ is a constant depending only on $n$ and $\beta=b^2-\sup_{B_a(p)}{\color{black}{{\sf d}}}_N^2(u,o)$. {\color{black}{Here ${\sf d}_N$ denotes the Riemannian distance of $(N,h)$. }} 
\end{thm}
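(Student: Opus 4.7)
The plan is to run the Cheng--Yau maximum principle argument, combining the Bochner--Eells--Sampson formula for harmonic maps on the source, the Hessian comparison for the distance-squared function on the Cartan--Hadamard target, and the Laplacian comparison for a cutoff on the source under the Ricci lower bound $\Ric_g\geq -Kg$.

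First I would record the Bochner identity for a harmonic map $u:M\to N$,
$$\tfrac{1}{2}\Delta|\d u|^2 = |\nabla \d u|^2 + \sum_i\bigl\langle \d u(\Ric^M(e_i)),\, \d u(e_i)\bigr\rangle - \sum_{i,j}\bigl\langle R^N(\d u(e_i),\d u(e_j))\d u(e_j),\, \d u(e_i)\bigr\rangle,$$
which, under $\Ric_g\geq -Kg$ on $M$ and non-positive sectional curvature on $N$, yields $\tfrac{1}{2}\Delta|\d u|^2 \geq |\nabla \d u|^2 - K|\d u|^2$; combined with the refined Kato inequality this gives $|\d u|\,\Delta|\d u|\geq -K|\d u|^2$ wherever $|\d u|>0$.

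Next I would exploit the Cartan--Hadamard hypothesis through the barrier $w(x):={\sf d}_N^2(u(x),o)$. Because $\mathrm{Hess}_N({\sf d}_N^2(\cdot,o))\geq 2g_N$ on $N$, the composition formula $\Delta(\varphi\circ u)=\langle\mathrm{Hess}\,\varphi,\d u\otimes \d u\rangle+\langle\nabla\varphi\circ u,\tau(u)\rangle$ together with $\tau(u)=0$ gives $\Delta w\geq 2|\d u|^2$. Setting $\psi:=b^2-w$, the hypothesis $u(B_a(p))\subset B_b(o)$ forces $\psi\geq\beta>0$ on $B_a(p)$, with $\Delta\psi\leq -2|\d u|^2$ and $|\nabla\psi|\leq 2b|\d u|$. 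On the source I would take a Li--Yau cutoff $\phi(x)=\chi({\sf d}(p,x)/a)$ with $\phi\equiv 1$ on $B_{a/2}(p)$ and $\supp\phi\subset B_a(p)$, satisfying $|\nabla\phi|^2/\phi\leq C_n/a^2$ and, by Laplacian comparison (with Calabi's trick along $\cut p$), $\Delta\phi\geq -C_n(1+\sqrt{K}a)/a^2$. Then I would form the auxiliary function
$$F := \frac{\phi^2\,|\d u|^2}{\psi^2},$$
the squared denominator being what generates both the $b^4$ factor and the $\beta^{-2}$ regime in the final bound, and apply the maximum principle at an interior maximum point $x_0\in B_a(p)$. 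Writing $\nabla \log F=0$ and $\Delta\log F\leq 0$ at $x_0$ and substituting the three ingredients above produces a pointwise quadratic inequality in $\sqrt{F(x_0)}$; the three error contributions --- the Ricci term $K|\d u|^2$ from Bochner, the cutoff term $(1+\sqrt{K}a)/a^2$ from Laplacian comparison, and the barrier-gradient term $|\nabla\psi|^2/\psi^2\leq 4b^2|\d u|^2/\psi^2$ --- are absorbed via Cauchy--Schwarz/Young into precisely the three quantities appearing in the maximum of \eqref{eq:Cheng}. Using $\psi\leq b^2$ and restricting to $B_{a/2}(p)$, where $\phi\equiv 1$, then yields the stated bound.

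The main technical obstacle is the bookkeeping at $x_0$: the exponents of $\phi$ and $\psi$ in $F$, together with the splitting parameters in Young's inequality, must be tuned so that (i) the bad-sign terms coming from $-2\Delta\psi/\psi$ and the cross gradient $\langle\nabla\phi,\nabla\psi\rangle/(\phi\psi)$ are dominated by the $|\nabla \d u|^2$ contribution produced by Bochner, and (ii) the three competing regimes --- curvature-dominated, cutoff-dominated, and barrier-gradient-dominated --- emerge cleanly as the three entries of the maximum. Handling the cut locus of $p$ via Calabi's trick is routine and poses no serious difficulty.
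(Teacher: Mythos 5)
This theorem is quoted in the paper from Cheng's original article and is not proved there, so there is no in-paper proof to compare against line by line; the closest analogue in the paper is Lemma~\ref{lem:lineargrowth}, which runs the same maximum-principle scheme for $V$-harmonic maps. Your plan is the standard Cheng--Yau argument and all the ingredients are correct: the Bochner--Eells--Sampson formula with Kato, the Greene--Wu Hessian comparison giving $\Delta\,{\sf d}_N^2(u,o)\geq 2|\d u|^2$ (this is Lemma~\ref{lem:LapalcianDistaHarm} of the paper with $V=0$), the Li--Yau cutoff with Calabi's trick, and the test function $\phi^2|\d u|^2/\psi^2$. You have also correctly located the only delicate point. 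Let me make it sharper, since a naive execution fails there: at the maximum, $-2\Delta\log\psi$ contributes the good terms $4|\d u|^2/\psi+2|\nabla\psi|^2/\psi^2$, while substituting the first-order condition into $-|\nabla e|^2/(2e^2)$ (with $e=|\d u|^2$) produces $-2(1+\eps)|\nabla\psi|^2/\psi^2$ among other terms. With the equal-weight Cauchy--Schwarz ($\eps=1$) the net barrier-gradient term is $-2|\nabla\psi|^2/\psi^2\geq -8b^2|\d u|^2/\psi^2$, which dominates the good term $4|\d u|^2/\psi$ because $\psi\leq b^2$, and the inequality does not close. The argument only works if the Young parameter is taken of size $\eps\sim\beta/b^2$: then the surviving good term is of order $\beta|\d u|^2/\psi^2$, and the price $(1+\eps^{-1})|\nabla\phi|^2/\phi^2\sim (b^2/\beta)\,a^{-2}$ is exactly what generates the $a^2b^2/\beta^2$ entry of the maximum in \eqref{eq:Cheng}. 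So the three regimes do not ``emerge'' from a symmetric splitting; the $\beta$-dependent tuning is forced. By contrast, the paper's Lemma~\ref{lem:lineargrowth} avoids this tuning altogether by using the first power $b^2-{\sf d}_N^2(u,o)$ in the denominator and enlarging $b$ so that ${\sf d}_N^2(u,o)/(b^2-{\sf d}_N^2(u,o))\leq 1/4$, at the cost of obtaining a bound in terms of $m_u(2a)$ rather than a fixed $\beta$; your route recovers Cheng's sharper, $\beta$-explicit estimate. With the $\eps$-tuning made explicit, your proposal is correct.
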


As a corollary, S.~Y.~Cheng~\cite{SYCheng} established a Liouville theorem for sublinear growth harmonic maps:

\begin{cor}[{\cite[Liouville theorem for harmonic maps]{SYCheng}}]\label{cor:Cheng}
Let $(M,g)$ be a complete Riemannian manifold with $\Ric_g\geq 0$, and $N$ a Cartan-Hadamard manifold, i.e., a complete connected and simply connected Riemannian manifold having non-positive sectional curvature. Let $u:M\to N$ be a  harmonic map.
If $u$ satisfies the sublinear growth
$$
\varlimsup_{a\to\infty}\frac{\sup_{x\in B_a(p)}{\color{black}{{\sf d}}}_N(u(x),o)}{a}=0
$$
for some $p\in M$ and $o\in N$.
Then $u$ is a constant map.
\end{cor}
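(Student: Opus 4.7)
The plan is to deduce Corollary \ref{cor:Cheng} directly from Theorem \ref{thm:Cheng} by specializing $K=0$, coupling the auxiliary radius $b$ to the scale $a$ in a balanced way, and then letting $a\to\infty$ to force $|\d u|\equiv 0$.

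First I would introduce $\phi(a):=\sup_{x\in B_a(p)}{\sf d}_N(u(x),o)$, so that the sublinear growth assumption reads $\phi(a)/a\to 0$ as $a\to\infty$. For any $b>\phi(a)$ we have $u(B_a(p))\subset B_b(o)$, so Theorem \ref{thm:Cheng} applies. A convenient choice is $b^2:=2\phi(a)^2$, which yields $\beta=b^2-\phi(a)^2=\phi(a)^2$. With $K=0$, the term $Ka^4/\beta$ inside the maximum in \eqref{eq:Cheng} vanishes, and after multiplying the remaining two terms by the prefactor $b^4/a^4$ a short computation shows that both are of order $\phi(a)^2/a^2$. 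Hence
\begin{align*}
\sup_{B_{a/2}(p)}|\d u|^2\leq C'_n\,\frac{\phi(a)^2}{a^2},
\end{align*}
for a constant $C'_n$ depending only on $n$.

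To conclude, I would fix an arbitrary $x\in M$; for every $a>2\,{\sf d}(p,x)$ the point $x$ lies in $B_{a/2}(p)$, so $|\d u|^2(x)\leq C'_n\phi(a)^2/a^2$, and letting $a\to\infty$ forces $|\d u|(x)=0$ by the sublinear growth hypothesis. Since $x$ is arbitrary, $\d u\equiv 0$ on $M$ and $u$ is constant. The only delicate point is the coupling of $b$ to $a$: if $b-\phi(a)\to 0$ too fast then $\beta\to 0$ too fast and the estimate diverges, while if $b\gg\phi(a)$ then the $b^4$ prefactor dominates. Any choice $b=c\,\phi(a)$ with a fixed constant $c>1$ produces the estimate above, so this balancing is really the only step that requires a moment's thought rather than a genuine obstacle.
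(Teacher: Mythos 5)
Your derivation is correct and is precisely the intended route: the paper presents Corollary~\ref{cor:Cheng} as an immediate consequence of Theorem~\ref{thm:Cheng}, obtained by setting $K=0$, coupling $b$ to $\phi(a):=\sup_{B_a(p)}{\sf d}_N(u(x),o)$ so that $\beta$ stays comparable to $b^2$, and letting $a\to\infty$; your choice $b^2=2\phi(a)^2$ indeed makes both surviving terms in \eqref{eq:Cheng} of size $C_n'\,\phi(a)^2/a^2$. The only (trivial) caveat is the degenerate case $\phi(a)=0$, which by monotonicity of $\phi$ either holds for every $a$ (so $u\equiv o$ and there is nothing to prove) or fails for all sufficiently large $a$, so your estimate applies exactly where it is needed.
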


Hildbrandt-Jost-Widemann~\cite{HildtJostWidemann} and Choi~\cite{Choi} further
extended Cheng's work \cite{SYCheng} as the following theorem:

\begin{thm}[{cf.~\cite[(11)]{Choi}}]\label{thm:Choi}
Let $(M,g)$ be a complete smooth Riemannian manifold with $\Ric_g\geq -Kg$, $K\geq0$, 
and $N$ a complete smooth Riemannian manifold having sectional curvature ${\rm Sect}_N\leq\kappa$, $\kappa>0$. 
Let $u:M\to N$ be a harmonic map. Assume $u(M)\subset B_b(o)$ lies inside the cut locus of $o\in N$ and some $b<\pi/(2\sqrt{\kappa})$. Then
\begin{align}
\sup_{B_{a/2}(p)}|\d u|^2&\leq 4\max\left\{\frac{2^{10}}{C_2^2a^2\beta^2},
\frac{2K}{C_2}+\frac{8C_1(1+a)}{C_2a^2}+\frac{64}{C_2a^2} \right\}.
\label{eq:Choi}
\end{align}
for some constants  $C_1,C_2>0$ independent of  $n,\kappa$ and $b$.  
Here $\beta:=b-\sup_{B_a(p)}\varphi\circ u$ and $\varphi(y)=1-\cos \sqrt{\kappa}{\color{black}{{\sf d}}}_N(y,o)$. 

\end{thm}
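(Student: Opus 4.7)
The plan is to follow the Cheng--Hildebrandt--Jost--Widman strategy: combine a Bochner identity for the energy density $|\d u|^2$ with a convex composite test function on the target, and apply Calabi's maximum-principle trick on the ball $B_a(p)$.

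First, introduce $\phi := \varphi \circ u$ with $\varphi(y) = 1 - \cos(\sqrt{\kappa}\, {\sf d}_N(y,o))$. Since $u(M) \subset B_b(o)$ lies inside the cut locus of $o$ with $b < \pi/(2\sqrt{\kappa})$, Hessian comparison for the upper sectional-curvature bound $\mathrm{Sect}_N \leq \kappa$ yields $\mathrm{Hess}_N\, \varphi \geq \kappa \cos(\sqrt{\kappa}\, b)\, g_N$ on $B_b(o)$. The composition formula $\Delta (\varphi \circ u) = \mathrm{tr}_g(u^* \mathrm{Hess}_N\, \varphi) + \langle \d \varphi, \tau(u)\rangle$ together with $\tau(u) = 0$ produces the key convexity inequality
\begin{equation*}
\Delta \phi \;\geq\; \kappa \cos(\sqrt{\kappa}\, b)\, |\d u|^2 \quad \text{on } M.
\end{equation*}

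Second, the Bochner formula for harmonic maps, combined with $\Ric_g \geq -Kg$ and $\mathrm{Sect}_N \leq \kappa$, yields
\begin{equation*}
\tfrac{1}{2}\, \Delta |\d u|^2 \;\geq\; |\nabla \d u|^2 - K\,|\d u|^2 - \kappa\,|\d u|^4.
\end{equation*}
Now introduce an auxiliary function of the form $F := \eta^2\, |\d u|^2 / (b - \phi)^\alpha$ for a suitable exponent $\alpha > 0$, where $\eta$ is a Laplacian-comparison cutoff with $\eta \equiv 1$ on $B_{a/2}(p)$, $\supp \eta \subset B_a(p)$, $|\nabla \eta| \leq C/a$, and $\Delta \eta \geq -C(1 + \sqrt{K}\, a)/a^2$. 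At an interior maximum point $x_0$ of $F$, combine $\nabla F(x_0) = 0$ and $\Delta F(x_0) \leq 0$ with the two pointwise inequalities above; the strict convexity of $\phi$ absorbs the quartic term $\kappa|\d u|^4$, while the cutoff contributions generate the terms $K/C_2$, $(1+a)/a^2$, and $1/(a^2 \beta^2)$ on the right-hand side of \eqref{eq:Choi}. Evaluating $F(x) \leq F(x_0)$ on $B_{a/2}(p)$ and using $\eta \equiv 1$ there yields the desired estimate.

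The main obstacle is the quartic term $\kappa|\d u|^4$ in the Bochner inequality. Without the composite test function $\phi$ this term cannot be absorbed, which is precisely why Hessian comparison on the regular target ball $B_b(o)$ must be invoked to manufacture a convex $\varphi$ with uniformly positive Hessian bounded below by $\kappa\cos(\sqrt{\kappa}b)$. Equally delicate is tuning the exponent $\alpha$ in $(b-\phi)^{-\alpha}$: when $\Delta F$ is expanded, the cross terms of the form $|\nabla(b-\phi)|^2 / (b-\phi)^{\alpha+2}$ interact with both $\Delta \phi$ and the quartic curvature term, and only a careful balancing of $\alpha$ against the constant $\kappa\cos(\sqrt{\kappa}b)$ converts the maximum-principle identity into a clean quadratic inequality in $|\d u|(x_0)$ that may be solved to produce \eqref{eq:Choi}.
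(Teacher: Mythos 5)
The paper does not actually prove Theorem~\ref{thm:Choi}; it is quoted as background from Hildebrandt--Jost--Widman and Choi (cf.~\cite[(11)]{Choi}), so there is no in-paper proof to compare against line by line. That said, your outline is precisely the argument of those sources, and it is also structurally identical to the argument the paper itself carries out in Lemma~\ref{lem:lineargrowth} for the Cartan--Hadamard target case (there with the test function $\varphi\,|\d u|^2/(b^2-{\sf d}_N^2(u,o))$ in place of your $\eta^2|\d u|^2/(b-\varphi\circ u)^\alpha$): Hessian comparison on the regular ball to get $\Delta(\varphi\circ u)\geq \kappa\cos(\sqrt{\kappa}b)\,|\d u|^2$, the Bochner inequality $\tfrac12\Delta|\d u|^2\geq|\nabla\d u|^2-K|\d u|^2-\kappa|\d u|^4$, Calabi's trick at the maximum of the localized quotient, and absorption of the quartic term by the convexity of $\varphi$. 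The steps you leave implicit are the standard but genuinely load-bearing ones: you must use $|\nabla|\d u|^2|^2\leq 4|\d u|^2|\nabla\d u|^2$ so that the $|\nabla\d u|^2$ term in Bochner controls the gradient terms produced by $\nabla F(x_0)=0$, and the exponent (Choi takes $\alpha=2$) must be chosen so that the coefficient $\alpha\kappa\cos(\sqrt{\kappa}b)/(b-\varphi\circ u)$ dominates $2\kappa$, which is where the restriction $b<\pi/(2\sqrt{\kappa})$ and the quantity $\beta$ enter. Your sketch identifies both issues correctly, so the approach is sound; only the explicit bookkeeping that yields the particular constants in \eqref{eq:Choi} is missing.
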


A geodesic ball $B_R(o)\subset N$ with ${\rm Sect}_N\leq\kappa$ is said to be a \emph{regular geodesic ball} if $B_R(o)\cap {\rm Cut}(o)=\emptyset$
and $R<\pi/2\sqrt{\kappa^+}$ with  $\kappa^+:=\max\{\kappa,0\}$.

\begin{cor}[{\cite[Theorem]{Choi}}]\label{cor:Choi}
Let $(M,g)$ be a complete Riemannian manifold with $\Ric_g\geq 0$,
and $N$ a complete Riemannian manifold with sectional curvature ${\rm Sect}_N\leq\kappa$, $\kappa>0$.
Let $u:M\to N$ be a harmonic map whose range is contained in a regular geodesic ball $B_b(o)$. 
Then $u$ is a constant map.
\end{cor}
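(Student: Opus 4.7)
The plan is to deduce Corollary~\ref{cor:Choi} directly from Theorem~\ref{thm:Choi} by setting $K=0$ (permitted since $\Ric_g \geq 0$) and letting the radius $a$ tend to infinity, in complete parallel with Cheng's derivation of Corollary~\ref{cor:Cheng} from Theorem~\ref{thm:Cheng}. The only extra input required is a uniform positive lower bound on the quantity $\beta$ appearing in \eqref{eq:Choi}.

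To obtain this lower bound, I would use that $u(M)$ lies in a regular geodesic ball $B_b(o)$ with $b < \pi/(2\sqrt{\kappa})$. Then $\sqrt{\kappa}\,{\sf d}_N(u(x),o) < \pi/2$ for every $x \in M$, whence
\[
\sup_{x\in M}\varphi\circ u(x) \;\leq\; 1-\cos(\sqrt{\kappa}\, b) \;<\; 1.
\]
Since this bound is independent of $a$ and of the base point $p$, one extracts a constant $\beta_0 = \beta_0(b,\kappa) > 0$ such that $\beta \geq \beta_0$ uniformly in $a > 0$ and $p \in M$.

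Next, I would fix an arbitrary $x \in M$, take $p = x$ so that $x \in B_{a/2}(p)$ trivially for every $a > 0$, and substitute $K=0$ together with $\beta \geq \beta_0$ into \eqref{eq:Choi} to obtain
\[
|\mathrm{d} u(x)|^2 \;\leq\; 4\max\!\left\{\frac{2^{10}}{C_2^2\, a^2\, \beta_0^2},\ \frac{8C_1(1+a)}{C_2\, a^2} + \frac{64}{C_2\, a^2}\right\}.
\]
Every term on the right-hand side tends to $0$ as $a \to \infty$, forcing $|\mathrm{d} u(x)|^2 = 0$. Since $x$ was arbitrary, $\mathrm{d} u \equiv 0$ on $M$, so $u$ is constant.

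There is no substantive obstacle in this argument: all the analytic heavy lifting has already been absorbed into Theorem~\ref{thm:Choi}, and the sole remaining point is the uniform positivity of $\beta$, which is immediate from the very definition of a regular geodesic ball.
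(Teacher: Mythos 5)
Your derivation is the classical one (essentially Choi's own): specialize the gradient estimate \eqref{eq:Choi} to $K=0$, recenter the ball at the point of interest, bound $\beta$ below uniformly, and let $a\to\infty$. That is a legitimate, self-contained route, but it is not the route this paper takes: Corollary~\ref{cor:Choi} is quoted here as background with a citation, and the paper's own proof of its generalization (Theorem~\ref{thm:Liouville2}, of which Corollary~\ref{cor:Choi} is the special case $V=0$, $m=n$) is probabilistic. There one composes $u$ with Kendall's convex function $\Phi$ on $B_R(o)\times B_R(o)$ to obtain a bounded subharmonic function on $M$ (Lemma~\ref{lem:submartingaleProperty}), runs the diffusion generated by the Laplacian, and uses martingale convergence together with the triviality of the invariant $\sigma$-field to force $u$ to be constant. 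The analytic route buys an explicit decay rate for $|\d u|$ on balls; the probabilistic route needs no gradient estimate for maps and is what allows the paper to handle a non-gradient drift $V$ and the weaker curvature hypotheses ${\Ric}_V^m\geq 0$.

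One step of your argument needs repair. You infer $\beta\geq\beta_0>0$ from $\sup_M\varphi\circ u\leq 1-\cos(\sqrt{\kappa}\,b)<1$, but with $\beta$ as literally defined in Theorem~\ref{thm:Choi}, namely $\beta=b-\sup_{B_a(p)}\varphi\circ u$, this only gives $\beta\geq b-(1-\cos\sqrt{\kappa}\,b)$, which can be negative (for instance $\kappa=100$ and $b=0.15<\pi/20$ give $b-(1-\cos(\sqrt{\kappa}\,b))\approx-0.78$), so the inference is a non sequitur as written; the quantity $b$ appearing in the definition of $\beta$ and the quantity $b$ bounding $\varphi\circ u$ live on different scales. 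The fix is to use the intended normalization of Choi's estimate: the relevant quantity is $c-\sup_{B_a(p)}\varphi\circ u$ for a constant $c$ exceeding $\sup_M\varphi\circ u$ (e.g.\ $c=1$), for which your bound immediately yields $\beta\geq\cos(\sqrt{\kappa}\,b)>0$ uniformly in $a>0$ and $p\in M$; this is exactly where the regularity of the ball, i.e.\ $b<\pi/(2\sqrt{\kappa})$, is used. With that adjustment the passage to the limit $a\to\infty$ goes through exactly as you describe.
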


The purpose of this paper is to extend Cheng's Liouville theorem for sublinear growth $V$-harmonic maps $u:M\to N$
between Riemannian manifolds $(M,g)$ and $(N,h)$ under the relaxed condition ${\Ric}_V^m\geq0$ for $m\in[-\infty,\,0\,]\,\cup\,[\,n,\,+\infty\,]$ and 
$(N,h)$ is a Cartan-Hadamard manifold, and to extend Hildbrandt-Jost-Widemann and Choi's Liouville theorem for  $V$-harmonic maps $u:M\to N$
between Riemannian manifolds $(M,g)$ and $(N,h)$ under reasonable condition.   Here, not only for the symmetric diffusion operator $\Delta_f$, we work on the more general setting of a Riemannian manifold $(M,g,V)$ with
 a $C^1$-vector field  $V$ which is not necessarily to be the gradient $V=\nabla f$ for some $f\in C^2(M)$.
Such a Riemannian manifold is equipped with canonical $V$-Laplacian and $V$-Ricci curvature. More precisely, the \emph{non-symmetric
 $V$-Laplacian} is defined by
$$
\Delta_V:=\Delta-\langle V,\nabla\cdot\rangle 
$$
and the $m$-dimensional \emph{$V$-Ricci curvature}, or the \emph{$(m, V)$-Ricci curvature},  is defined as follows:
$$
{\Ric}_V^m:={\Ric}_g+\frac12\mathcal{L}_Vg-\frac{V^*\otimes V^*}{m-n},
$$
where $m\in \,[\,-\infty, +\infty]$ and set $V=0$ under $m=n$,  $\mathcal{L}_Vg(X,Y):=\langle \nabla_XV,Y\rangle+\langle\nabla_YV,X\rangle$ is the Lie derivative of $g$ with respect to $V$, and $V^*$ denotes its dual $1$-form.  
We would like to point out that, in the case where $m\in[\,n,\,+\infty\,[$, $\Ric_{V}^m$ has been introduced in previous works
of Qian \cite{Qian} and Bakry-Qian \cite{BQ}.  In \cite{Xdli:Liouville} (Section 8.7, p.~1347),  for $V=-B$,
the third named author 
used the notation $\Ric_{m, n}^S(L)=\Ric_g-\nabla^S B-{B\otimes B\over m-n}$ (instead of $\Ric_V^m$) to denote the $m$-dimensional Bakry-Emery Ricci curvature for non-symmetric elliptic diffusion
operator $L=\Delta+B$, where $\nabla^S B(\xi, \eta)={1\over 2}\{\langle  \nabla_\xi B, \eta\rangle+\langle \nabla_\eta B, \xi\rangle\}$.
Moreover, it has been pointed out in  \cite{Xdli:Liouville} that  it would be very possible to extend the
various Liouville theorems and Harnack inequalities to non-symmetric diffusion operators. Indeed, as the Laplacian comparison theorem holds for $L=\Delta+B$ under the condition $\Ric_{m, n}^S(L)\geq Kg$ for $K\in \mathbb{R}$
and $m\in[\,n,\,+\infty\,[$, see Bakry-Qian \cite{BQ}, the proofs of
the strong Liouville theorem and the Li-Yau Harnack inequality for symmetric elliptic diffusion operators in  \cite{Xdli:Liouville}  can be extended to non-symmetric diffusion operators. For the detail, see \cite{YiLi:2015}.

In the case of $V=\nabla f$, it coincides with ${\Ric}_f^m$. For $m_1\in\,]-\infty,1\,]$ and $m_2\in\,[\,n,+\infty\,[$ we have the following order
\begin{align}
{\Ric}_V^1\geq{\Ric}_V^{m_1}\geq{\Ric}_V^{-\infty}={\Ric}_V^{\infty}\geq{\Ric}_V^{m_2}.\label{eq:order}
\end{align}
So the condition ${\Ric}_V^m\geq Kg$ is weaker than ${\Ric}_V^{\infty}\geq Kg$ for $m\in\,]\,-\infty,\,1\,]$ and $K\in\R$. 
\newline

We now describe more detail of our setting. Let $(M, g)$ and $(N,h)$ be two complete smooth Riemaniann manifolds. A smooth map $u: M\rightarrow
 N$ is said to be \emph{$V$-harmonic} if
$\tau_V(u):=\tau(u)-(\d u)(V)=0$. 
When $V=\nabla f$ for $f\in C^2(M)$, any $V$-harmonic map is called the \emph{$f$-harmonic map}. The notion of $V$-harmonic map covers the various notions of harmonicity for maps, e.g., Hermitian harmonic maps, Weyl harmonic maps and affine harmonic maps (see \cite{ChenJostWang} for these notions on harmonicity). In \cite[Theorem~2]{ChenJostQiu},  
Chen-Jost-Qiu proved the bounded Liouville property for $V$-harmonic maps from complete Riemannian manifolds with ${\Ric}_V^{\infty}\geq0$  into regular geodesic ball with 
${\rm Sect}_N\leq\kappa$ for $\kappa\geq0$ under the sublinear growth condition for $V$. 
Later, Qiu~\cite[Theorem~2]{Qiu}  
also proved the bounded Liouville property as in \cite[Theorem~2]{ChenJostQiu} 
without assuming the sublinear growth condition for $V$. 
We will prove not only the bounded Liouville property for $V$-harmonic maps from complete Riemannian manifolds 
into regular geodesic ball but also Liouville property for sublinear growth $V$-harmonic map into Cartan-Hadamard manifold $(N,h)$ under ${\Ric}_V^m\geq0$ with $m\in[\,-\infty,\,0\,]\,\cup\,[\,n,+\infty\,]$.
\\\quad\\
 Hereafter, we always fix points $p\in M$ and $o\in N$, and also $m\in\,[\,-\infty,\,0\,]\,\cup\,[\,n,\,+\infty\,[$.

\begin{defn}
{\rm
A map $u:M\to N$ is said to be of \emph{sublinear growth} if
\begin{align*}
\varlimsup_{a\to\infty}m_u(a)/a=0,
\end{align*}
for some/any $o\in N$, 
where $m_u(a):=\sup_{x\in B_a(p)}{\color{black}{{\sf d}}}_N(u(x),o)$, i.e., $m_u(a)=o(a)\;\; (a\to\infty)$. 
}
\end{defn}

We prepare several growth conditions for a smooth map $u:M\to N$:  
\begin{enumerate}
\item[{\bf {$\;$}(G1)}] $u$ is of sublinear growth, i.e., $m_u(a)=o(a),\quad (a\to\infty)$.  
\item[{\bf {$\;$}(G2)}] $m_u(a)=o(\sqrt{a}),\quad (a\to\infty)$.
\item[{\bf {$\;$}(G3)}] $m_u(a)=o(\sqrt{\log a}),\quad (a\to\infty)$.
\end{enumerate} 
It is easy to  see that $\text{\bf (G1)}$ (resp.~$\text{\bf (G2)}$) is weaker than $\text{\bf (G2)}$ 
(resp.~$\text{\bf (G3)}$). Note here that any bounded map $u:M\to N$ satisfies $\text{\bf (G3)}$, i.e., the boundedness of $u$ is stronger than any {\bf(G{\boldmath$i$})}.

Fix a point $p\in M$. 
We set $v(r):=\sup_{x\in B_r(p)}|V|_x$ for $r>0$ and $v(0):=|V|_p$. Then $v$ is a non-negative continuous function on $[0,+\infty[$ and 
$|V|_x\leq v(r_p(x))$ holds for $x\in M$. {\color{black}{Here $r_p(x):={\sf d}(x,p)$ is the radial function from a referene point $p\in M$.}}
Now we consider the following conditions: 
\begin{enumerate}
\item[{\bf (A1)}] $v\in L^1([0,+\infty[)$, or $\langle V,\dot{\gamma}_t\rangle_{\gamma_t}\geq0$ a.e.~$t$ holds for any unit speed geodesic $\gamma$ starting from $p$;
\item[{\bf (A2)}] There exists $D\geq1$ such that $\int_0^rv(s)\d s\leq \frac{n-m}{4}\log D(1+r)$;
\item[{\bf (A3)}] There exists $D\geq1$ such that $\int_0^rv(s)\d s\leq \frac{n-m}{4}\log D(1+r^2)$.
\end{enumerate}
The condition $\text{\bf (A2)}$ (resp.~$\text{\bf (A3)}$) with $D=1$ is satisfied if $v(r)\leq\frac{n-m}{4(1+r)}$ 
(resp.~$v(r)\leq \frac{(n-m)r}{2(1+r^2)}$) for $r\geq0$. 
\medskip

Our first main result extends Cheng's Liouville theorem to sublinear growth $V$-harmonic maps on manifolds with ${\Ric}_V^m\geq 0$ and ${\rm Sect}_N\leq 0$.

\begin{thm}\label{thm:Liouville1} Let $(M, g, V)$ be a complete Riemaniann manifold with 
 ${\Ric}_V^m\geq0$ for some $C^1$-vector field $V$ and some constant $m\in\,[-\infty,\,0\,]
 \,\cup\,[\,n,\,+\infty\,]$,  and
$(N,h)$ is a Cartan-Hadamard manifold, i.e., $N$ is a complete connected and simply connected Riemannian manifold with ${\rm Sect}_N\leq0$. Let $u:M\to N$ be a $V$-harmonic map.
For each $i=1,2,3$, we suppose {\bf (A{\boldmath$i$})} for $p\in M$ and {\bf (G{\boldmath$i$})} 
for $u$ provided $m\in[-\infty,\,0\,]\,\cup\,\{+\infty\}$, otherwise, we only suppose 
{\bf (G1)}. 
Then $u$ is a constant map. 
\end{thm}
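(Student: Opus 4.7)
My plan is to reduce the theorem to a Liouville property for non-negative $\Delta_V$-subharmonic functions on $(M,g,V)$, and then close the argument by a Caccioppoli-type estimate combined with the weighted Laplacian/volume comparison available under ${\Ric}_V^m\geq 0$ together with the drift conditions $\text{\bf (A{\boldmath$i$})}$.

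First, I would reduce to a scalar problem via the target distance. Put $\rho_o:={\sf d}_N(\cdot,o)$ and $\phi_o:=\rho_o^2/2$. Since $(N,h)$ is Cartan-Hadamard, the Hessian comparison gives ${\rm Hess}\,\phi_o\geq h$ globally on $N$, so the standard composition formula
$$\Delta_V(\phi_o\circ u)={\rm tr}_g\!\left(({\rm d}u)^{\ast}\,{\rm Hess}\,\phi_o\,({\rm d}u)\right)+\langle\nabla\phi_o,\tau_V(u)\rangle$$
together with the $V$-harmonicity $\tau_V(u)=0$ yields $\Delta_V\Phi\geq|{\rm d}u|^2\geq 0$ for $\Phi:=\phi_o\circ u$. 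Thus $\Phi$ is non-negative, $\Delta_V$-subharmonic, and satisfies the pointwise bound $\Phi(x)\leq\tfrac12\,m_u(r_p(x))^2$.

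Next, I would run a cutoff argument. Choose $\eta_a\in C_c^\infty(M)$ with $\eta_a\equiv 1$ on $B_{a/2}(p)$, $\supp\eta_a\subset B_a(p)$, and $|\nabla\eta_a|\leq C/a$, multiply $\Delta_V\Phi\geq|{\rm d}u|^2$ by $\eta_a^2$, and integrate against $\nu_g$. Integration by parts produces the usual gradient term $-2\int\eta_a\langle\nabla\eta_a,\nabla\Phi\rangle\,\d\nu_g$, absorbed by Cauchy-Schwarz, plus a drift term $-\int\eta_a^2\langle V,\nabla\Phi\rangle\,\d\nu_g$, which I would bound using $|V|\leq v(r_p)$ and the integrated control on $v$ coming from $\text{\bf (A{\boldmath$i$})}$. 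The outcome is a Caccioppoli-type estimate
$$\int_{B_{a/2}(p)}|{\rm d}u|^2\,\d\nu_g \;\leq\; \frac{C}{a^2}\int_{B_a(p)}\Phi\,\d\nu_g\;+\;(\text{drift remainder}),$$
where the remainder is of lower order. The weighted Bishop-Gromov/volume comparisons under ${\Ric}_V^m\geq 0$ from \cite{KL} (covering $m\in[-\infty,0]\cup\{+\infty\}$ via $\text{\bf (A{\boldmath$i$})}$) and the Qian/Bakry-Qian theory (for $m\in[\,n,+\infty[\,$) then bound $\nu_g(B_a(p))$ by a polynomial in $a$ of a degree precisely matched to $\text{\bf (A{\boldmath$i$})}$. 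Substituting the pointwise bound $\Phi\leq\tfrac12\,m_u(a)^2$ from $\text{\bf (G{\boldmath$i$})}$ makes the right-hand side vanish as $a\to\infty$, forcing $|{\rm d}u|\equiv 0$, hence $u$ is constant.

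The main obstacle is the non-gradient case $V\neq\nabla f$: the operator $\Delta_V$ is not symmetric with respect to any obvious measure, and $\operatorname{div}_g V$ has no a priori sign, so the divergence term produced by integration by parts must be tracked explicitly and absorbed via the bounds on $v$ from $\text{\bf (A{\boldmath$i$})}$. My backup strategy, suggested by the authors' allusion to a probabilistic argument, would be to recast subharmonicity of $\Phi$ as the submartingale property of $\Phi(X_t)$ for the $\Delta_V$-diffusion $(X_t)$, and combine the pointwise bound $\Phi(X_t)\leq\tfrac12\,m_u(r_p(X_t))^2$ with radial-process comparison estimates for $r_p(X_t)$ to force $|{\rm d}u|\equiv 0$. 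Either way, the matching between the three pairings $(\text{\bf A1,G1})$, $(\text{\bf A2,G2})$, $(\text{\bf A3,G3})$ is sharp, and the regime $m\in[-\infty,0]$ requires the exponential-weight $e^{-4f/(n-m)}$-type comparison of Wylie-Yeroshkin and \cite{KL} rather than the classical Bakry-Émery one, which complicates the bookkeeping.
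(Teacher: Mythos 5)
Your reduction to the scalar subsolution $\Phi=\tfrac12\,{\sf d}_N^{\,2}(u,o)$ with $\Delta_V\Phi\geq|\d u|^2$ is correct (it is the paper's Lemma~\ref{lem:LapalcianDistaHarm}), but the Caccioppoli step that is supposed to finish the proof does not close. After absorbing the gradient term, your estimate reads
\begin{align*}
\int_{B_{a/2}(p)}|\d u|^2\,\d\nu_g\;\lesssim\;\frac{1}{a^2}\int_{B_a(p)}\Phi\,\d\nu_g+(\text{drift})\;\lesssim\;\frac{m_u(a)^2}{a^2}\,\nu_g(B_a(p))+(\text{drift}),
\end{align*}
and the right-hand side does \emph{not} vanish as $a\to\infty$: the factor $m_u(a)^2/a^2$ tends to $0$ under {\bf (G1)}, but it is multiplied by $\nu_g(B_a(p))\to\infty$. (On $\R^n$ a linear-growth harmonic function saturates this bound with both sides of order $a^n$, so the inequality cannot by itself detect constancy.) To extract $|\d u|\equiv0$ one would need, in addition, a mean value inequality for the $\Delta_V$-subharmonic function $|\d u|^2$ together with volume doubling for a measure adapted to $\Delta_V$; neither is available off the shelf here, since $\Delta_V$ has no invariant measure for general non-gradient $V$, and for $m\in[-\infty,0]$ the relevant volume comparison is the distorted Wylie--Yeroshkin/\cite{KL} one, not Bishop--Gromov for $\nu_g$. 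The drift term $\int\eta_a^2\langle V,\nabla\Phi\rangle\,\d\nu_g$ is likewise not controlled by {\bf (A{\boldmath$i$})}, which only bounds the one-dimensional radial integral $\int_0^r v(s)\,\d s$, not $\int_{B_a}v^2\,\d\nu_g$.

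Your backup strategy is essentially the route the paper actually takes, but as sketched it still misses the decisive ingredient. The paper's argument needs two halves: an upper bound ${\E}_x[{\sf d}_N^{\,2}(u(X_{t\wedge\tau_{G_n}}),o)]\lesssim \sup_{B_A}{\sf d}_N^{\,2}(u,o)+\eps^2(r_p^2(x)+2(1+D)t)$ coming from {\bf (G{\boldmath$i$})} and the radial-process estimates under {\bf (B{\boldmath$i$})} (derived from {\bf (A{\boldmath$i$})} via the Laplacian comparison of Theorem~\ref{thm:(A)0}), and a lower bound $2t|\d u|^2(x)\leq\sup_n{\E}_x[{\sf d}_N^{\,2}(u(X_{t\wedge\tau_{G_n}}),o)]$. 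The lower bound requires that $|\d u|^2(X_t)$ be a genuine submartingale, i.e.\ $|\d u|^2(x)\leq{\E}_x[|\d u|^2(X_t)]$, and passing from the local submartingale (from the Bochner inequality $\Delta_V|\d u|^2\geq0$, Corollary~\ref{cor:BochnerIdentity}) to this global statement needs uniform integrability. That is supplied by the Cheng-type gradient estimate of Lemma~\ref{lem:lineargrowth}, which shows that {\bf (G{\boldmath$i$})} for $u$ forces the \emph{same} growth for $|\d u|$ under {\bf (B3)}; this is exactly the step the paper identifies as missing in Stafford's argument, and it is absent from your proposal as well. Without it, neither your primary nor your backup route is complete.
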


For each $i=1,2,3$, 
we consider the following condition ${\bf (B\text{\boldmath$i$})}$ on the shape of Laplacian comparison. 
\begin{enumerate}
\item[{\bf (B\text{\boldmath$i$})}] There exists $D\geq1$ such that $r_p(x)\Delta_Vr_p(x)\leq D(1+r_p(x)^{i-1})$ for $x\notin {\rm Cut}(p)$.
\end{enumerate}
{\color{black}{Note here that the radial function $x\mapsto r_p(x)$ is secondly differentiable at $x\notin {\rm Cut}(p)$.}} 
The condition {\bf (A\text{\boldmath$i$})} implies 
{\bf (B\text{\boldmath$i$})} for each $i=1,2,3$ (see Theorem~\ref{thm:(A)0} below). 
\begin{remark}
{\rm 
The statement of Theorem~\ref{thm:Liouville1} remains valid by replacing 
{\bf (A{\boldmath$i$})} with {\bf (B{\boldmath$i$})} for each $i=1,2,3$ (see 
the proof of Theorem~\ref{thm:(A)0}). 
}
\end{remark}

Our second main result extends Hildbrandt-Jost-Widemann and Choi's Liouville theorem as follows: 

\begin{thm}\label{thm:Liouville2} Let $(M, g, V)$ be a complete Riemaniann manifold with ${\Ric}_V^m\geq0$ for some $C^1$-vector field $V$ and some constant $m\in\,[-\infty,\,0\,]
\,\cup\,[\,n,+\infty\,]
$, and $N$ be a complete Riemannian manifold with ${\rm Sect}_N\leq\kappa$ with $\kappa>0$. 
Suppose {\bf (A3)} for  $p\in M$ when $m\in[-\infty,\,0\,]\,\cup\,\{+\infty\}$.   
Let $u:M\to N$ be a $V$-harmonic map whose range is contained in a regular geodesic ball $B_R(o)$.
Then $u$ is a constant map.
\end{thm}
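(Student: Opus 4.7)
The plan is to reduce the theorem to a bounded Liouville property for $\Delta_V$-subharmonic functions via a strictly convex composition on the target. Following Hildebrandt--Jost--Widman and Choi, I would set
\begin{equation*}
\varphi(y):=1-\cos\bigl(\sqrt{\kappa}\,{\sf d}_N(y,o)\bigr),\qquad y\in B_R(o).
\end{equation*}
The Hessian comparison theorem for $\mathrm{Sect}_N\le\kappa$ together with Gauss's lemma gives
\begin{equation*}
\mathrm{Hess}_N\,\varphi\ \ge\ \kappa\cos\bigl(\sqrt{\kappa}\,{\sf d}_N(\cdot,o)\bigr)\,h\ \ge\ c_1\,h,\qquad c_1:=\kappa\cos(\sqrt{\kappa}R)>0,
\end{equation*}
on $B_R(o)$, using $R<\pi/(2\sqrt{\kappa})$.

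Next set $w:=\varphi\circ u$. The composition formula for the non-symmetric $V$-Laplacian,
\begin{equation*}
\Delta_V(\phi\circ u)\ =\ \mathrm{tr}_g\bigl(u^{*}\mathrm{Hess}_N\,\phi\bigr)+\langle\nabla\phi,\tau_V(u)\rangle,
\end{equation*}
combined with the $V$-harmonicity $\tau_V(u)=0$, yields $\Delta_V w\ge c_1|\d u|^2\ge 0$. Since $u(M)\subset B_R(o)$, $w$ is bounded by $1-\cos(\sqrt{\kappa}R)<1$, so $w$ is a bounded $\Delta_V$-subharmonic function on $M$.

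The core of the argument is then a bounded Liouville theorem for $\Delta_V$-subharmonic functions under ${\rm Ric}_V^m\ge 0$ with $m\in[-\infty,0]\cup[n,+\infty]$ and, when needed, condition {\bf (A3)}. I would establish this as a subharmonic companion to Brighton's Theorem~\ref{thm:Brighton} via a probabilistic route: {\bf (A3)} delivers the Laplacian comparison $r_p\Delta_V r_p\le D(1+r_p^{2})$ of the shape {\bf (B3)} (cf.~Theorem~\ref{thm:(A)0}), which then serves as a Lyapunov function for the $\Delta_V$-diffusion $X_t$, forcing non-explosion and recurrence. A bounded $\Delta_V$-subharmonic $w$ makes $w(X_t)$ a bounded submartingale converging a.s.~by Doob; recurrence together with the continuity of $w$ then forces $w\equiv\mathrm{const}$. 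Equivalently, one can work with the non-negative $\Delta_V$-superharmonic $\sup_M w-w$ and invoke the parabolicity of $M$ guaranteed by {\bf (B3)}. Once $w$ is constant, $0=\Delta_V w\ge c_1|\d u|^2$ forces $|\d u|\equiv 0$, hence $u$ is constant.

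The main obstacle is the bounded subharmonic Liouville in the full generality $m\in[-\infty,0]$ with $V$ not necessarily a gradient: non-symmetry of $\Delta_V$ obstructs the usual symmetric heat-semigroup and integration-by-parts techniques, while the enlarged range of $m$ demands the refined Laplacian comparison developed in the Wylie--Yeroshkin~\cite{WyYero} and Kuwae--Li~\cite{KL} framework, suitably extended to non-gradient drift. The probabilistic route through the $\Delta_V$-diffusion, to which the abstract alludes as enhancing an argument in~\cite{Staff:Liouville}, appears to be the cleanest way to treat the classical regime $m\in[n,+\infty]$ and the effective-dimension regime $m\in[-\infty,0]$ in a single framework.
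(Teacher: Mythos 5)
Your reduction to a bounded $\Delta_V$-subharmonic function $w=\varphi\circ u$ with $\Delta_V w\ge c_1|\d u|^2$ is the classical Hildebrandt--Jost--Widman/Choi step and is fine, but the core of your argument contains a genuine gap: you claim that {\bf (A3)}/{\bf (B3)} forces \emph{recurrence} of the $\Delta_V$-diffusion, and hence a bounded Liouville theorem for $\Delta_V$-\emph{subharmonic} functions. This is false. The condition {\bf (B3)} (a quadratic upper bound on $r_p\Delta_V r_p$) is a Lyapunov-type condition that yields only conservativeness (non-explosion), as in Corollary~\ref{cor:conservative}; it does not yield recurrence. Already $M=\R^n$ with $n\ge 3$ and $V=0$ satisfies ${\Ric}_V^m\ge 0$ and {\bf (B3)}, yet Brownian motion is transient and bounded non-constant subharmonic functions exist (e.g.\ a smoothing of $\max\{-|x|^{-1},-2\}$). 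Indeed, by Lemma~\ref{lem:recurrent} the bounded subharmonic Liouville property is \emph{equivalent} to recurrence, so it cannot be a consequence of the hypotheses of Theorem~\ref{thm:Liouville2}; the recurrence-based statement is deliberately separated out as Theorem~\ref{thm:Liouville3}, where recurrence is an explicit hypothesis.

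What the theorem's hypotheses actually deliver is the much weaker bounded Liouville property for $V$-\emph{harmonic} functions (Theorem~\ref{thm:Liouville1} applied with $N=\R$), and the paper's proof is built so that only this is needed. It uses Kendall's convex function $\Phi$ on $B_R(o)\times B_R(o)$ (nonnegative, bounded, vanishing exactly on the diagonal, with $\Delta_V\Phi(u(\cdot),q)\ge 0$ by Lemma~\ref{lem:submartingaleProperty}); martingale convergence then produces an a.s.\ limit $L$ of $u(X_{\tau_{G_n}})$ which is shift-invariant. The triviality of the invariant $\sigma$-field is obtained by noting that $x\mapsto\P_x(A)$ is a bounded finely $V$-harmonic (hence, by Kendall's regularity results, genuinely $V$-harmonic) function, which Theorem~\ref{thm:Liouville1} forces to be a constant $c$ with $c^2=c$. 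This gives $L\equiv L_\infty$ a.s., and the submartingale property of $\Phi(u(X_{\tau_{G_n}}),L_\infty)$ together with $\Phi(L,L_\infty)=0$ yields $u\equiv L_\infty$. If you want to keep your function $\varphi\circ u$ instead of $\Phi$, you must replace the recurrence step by an argument of this 0--1-law type (or by a genuine gradient estimate \`a la Choi adapted to $\Delta_V$ and ${\Ric}_V^m$ with $m\le 0$); as written, the proposal does not prove the theorem.
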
 

\begin{cor}[{Chen-Jost-Qiu~\cite[Theorem~2]{ChenJostQiu} and 
Qiu~\cite[Theorem~2]{Qiu}}]\label{cor:ChenJostQiu}\quad\\
Let $(M, g, V)$ be a complete Riemaniann manifold with ${\Ric}_V^{\infty}\geq0$ for some $C^1$-vector field $V$, and $N$ be a complete Riemannian manifold with ${\rm Sect}_N\leq\kappa$ with $\kappa>0$. 
Let $u:M\to N$ be a $V$-harmonic map whose range is contained in a regular geodesic ball $B_R(o)$.
Then $u$ is a constant map.
\end{cor}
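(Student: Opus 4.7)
The plan is to apply Theorem~\ref{thm:Liouville2} with the effective dimension specialized to $m=-\infty$. The key structural observation is the identity ${\Ric}_V^{-\infty}={\Ric}_V^{\infty}$ recorded in~\eqref{eq:order}, which is dictated by the convention that the term $V^{\ast}\otimes V^{\ast}/(m-n)$ vanishes at the limits $m=\pm\infty$. Under this identification, the standing hypothesis ${\Ric}_V^{\infty}\geq 0$ is literally ${\Ric}_V^{-\infty}\geq 0$, and since $-\infty\in[-\infty,0]$ the curvature assumption of Theorem~\ref{thm:Liouville2} is satisfied.

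Next I would verify condition {\bf (A3)} at $m=-\infty$. The prefactor $\tfrac{n-m}{4}$ equals $+\infty$, so for any fixed $D>1$ the right-hand side $\tfrac{n-m}{4}\log D(1+r^2)$ is $+\infty$ for every $r\geq 0$ (using $\log D>0$), and the inequality $\int_0^r v(s)\,\d s\leq \tfrac{n-m}{4}\log D(1+r^2)$ holds trivially. In particular, no growth or boundedness hypothesis on the auxiliary vector field $V$ is needed. With the curvature bound and {\bf (A3)} both secured, Theorem~\ref{thm:Liouville2} applies directly and forces the $V$-harmonic map $u\colon M\to N$, whose image lies in the regular geodesic ball $B_R(o)$, to be constant.

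There is no substantive obstacle in the argument: the corollary is a clean specialization of Theorem~\ref{thm:Liouville2}. The conceptual point worth underlining is that the $m=-\infty$ branch of our framework is strictly more powerful here than the $m=+\infty$ branch, because {\bf (A3)} degenerates to a vacuous statement precisely at $m=-\infty$. This is the mechanism by which both Chen-Jost-Qiu~\cite{ChenJostQiu} and Qiu's~\cite{Qiu} subsequent removal of the sublinear-growth assumption on $V$ are recovered in a single stroke.
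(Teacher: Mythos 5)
Your conclusion is right, and reducing to Theorem~\ref{thm:Liouville2} is also what the paper does, but the way you discharge the hypothesis on $V$ has a genuine gap. In the proof of Theorem~\ref{thm:Liouville2} the condition {\bf (A3)} enters only as a means to obtain the Laplacian comparison {\bf (B3)}, via Theorem~\ref{thm:(A)0}; and Theorem~\ref{thm:(A)0} is stated (and proved) only for finite $m\in\,]-\infty,\,1\,]$, with output $r_p\Delta_V r_p\leq (n-m)D(1+r_p^2)$, a bound whose constant $(n-m)D$ blows up as $m\to-\infty$. So declaring {\bf (A3)} vacuously true at $m=-\infty$ does not produce the non-vacuous estimate {\bf (B3)} on which the whole argument runs (conservativeness in Corollary~\ref{cor:conservative}, the moment bounds of Lemma~\ref{lem:squaredistance}, the gradient estimate of Lemma~\ref{lem:lineargrowth}): a hypothesis with no content cannot yield a conclusion with content. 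A further sign that the conditions {\bf (A{\boldmath$i$})} are not meant to be evaluated literally at $m=\pm\infty$ is that the same reading at $m=+\infty$ gives the prefactor $\tfrac{n-m}{4}=-\infty$ and an unsatisfiable condition, which is incompatible with the identification ${\Ric}_V^{-\infty}={\Ric}_V^{+\infty}$ that you yourself invoke.

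The missing ingredient --- and the paper's actual one-line proof --- is Theorem~\ref{thm:LaplacianCompaInfinity} (Qiu's comparison estimate): ${\Ric}_V^{\infty}\geq -Kg$ alone gives $\Delta_V r_p\leq C+Kr_p$ off a small ball, hence $\text{\bf (B3)}^{*}$ and so {\bf (B3)}, with no growth or boundedness assumption on $V$; combined with Remark~\ref{rem:Liouville3}(ii), which records that Theorem~\ref{thm:Liouville2} remains valid with {\bf (A3)} replaced by {\bf (B3)}, this yields the corollary. That, rather than any degeneration of {\bf (A3)}, is the mechanism by which the Chen--Jost--Qiu and Qiu results (in particular the removal of the sublinear-growth assumption on $V$) are recovered. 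Your write-up should route through Theorem~\ref{thm:LaplacianCompaInfinity} and Remark~\ref{rem:Liouville3}(ii) explicitly.
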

\begin{cor}\label{cor:New}
Let $(M, g, V)$ be a complete Riemaniann manifold with ${\Ric}_V^m\geq0$ for some $C^1$-vector field $V$ and some constant $m\in\,]-\infty,\,0\,]\,\cup\,[\,n,+\infty\,[
$, and $N$ be a complete Riemannian manifold with ${\rm Sect}_N\leq\kappa$ with $\kappa>0$. Suppose that $V$ is bounded. 
Let $u:M\to N$ be a $V$-harmonic map whose range is contained in a regular geodesic ball $B_R(o)$.
Then $u$ is a constant map.
\end{cor}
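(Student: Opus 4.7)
The strategy is to reduce Corollary~\ref{cor:New} to Theorem~\ref{thm:Liouville2} in two disjoint regimes of $m$.

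\textbf{Case 1:} For $m\in[n,+\infty[$, Theorem~\ref{thm:Liouville2} requires no hypothesis on $V$ at all, since the condition {\bf (A3)} is only imposed when $m\in[-\infty,0]\cup\{+\infty\}$. The conclusion that $u$ is constant therefore follows immediately.

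\textbf{Case 2:} For $m\in\,]-\infty,0]$, the boundedness $|V|\le C$ gives only $\int_0^r v(s)\,\d s\le Cr$, which is linear in $r$, so condition {\bf (A3)} (whose right-hand side is logarithmic in $r$) cannot be inferred. Instead I would verify the Laplacian comparison condition {\bf (B3)} directly; as noted in the Remark following Theorem~\ref{thm:Liouville1}, this is the quantity actually used in the argument, and the analogous replacement is available for Theorem~\ref{thm:Liouville2}.

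To produce {\bf (B3)} I would apply the Bochner identity to the radial function $r_p$ along a unit-speed geodesic $\gamma$ issuing from $p$. Writing $h(t):=\Delta_Vr_p(\gamma(t))$ and $a(t):=\langle V,\dot\gamma(t)\rangle_{\gamma(t)}$, the Cauchy--Schwarz bound $|\nabla^2 r_p|^2\ge(\Delta r_p)^2/(n-1)$ together with the decomposition $\Ric_V^\infty=\Ric_V^m-V^*\otimes V^*/(n-m)$ yields the coupled Riccati-type inequality
\begin{align*}
h'(t)+\frac{(h(t)+a(t))^2}{n-1}\le\frac{a(t)^2}{n-m}.
\end{align*}
Expanding the square, absorbing the cross term by Young's inequality $2|a(t)h(t)|\le h(t)^2/2+2a(t)^2$, and using $|a(t)|\le C$ together with $m\le 0$, one decouples to the standard scalar Riccati inequality
\begin{align*}
h'(t)+\frac{h(t)^2}{2(n-1)}\le\frac{2C^2}{n-1}.
\end{align*}
Comparison with the model Riccati ODE produces a bound of the form $h(t)\le 2(n-1)/t+2C$ for all $t>0$, and multiplying by $t$ gives $r_p(x)\,\Delta_Vr_p(x)\le D(1+r_p(x)^2)$ outside the cut locus of $p$ for a suitable constant $D=D(n,m,C)$, which is precisely {\bf (B3)}. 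The cut locus is handled by Calabi's standard upper-barrier trick, interpreting the inequality distributionally. With {\bf (B3)} in hand, Theorem~\ref{thm:Liouville2} concludes that $u$ is constant.

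\textbf{Main obstacle:} The decisive step is decoupling the drift $a(t)$ from the auxiliary function $h(t)$ in the Riccati inequality. For unbounded $V$ the cross term $2a(t)h(t)/(n-1)$ propagates along $\gamma$ and destroys the polynomial comparison, which is exactly why a boundedness hypothesis on $V$ is required in this range of $m$; the assumption $|V|\le C$ is what permits the Young-type absorption into the quadratic term. Making the upper-barrier argument distributionally rigorous across the cut locus is a further, but standard, technical point.
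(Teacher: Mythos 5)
Your proposal is correct and follows essentially the same route as the paper: for $m\in[\,n,+\infty\,[$ no hypothesis on $V$ is needed, and for $m\in\,]-\infty,\,0\,]$ one verifies {\bf (B3)} from the boundedness of $V$ and then invokes the {\bf (B3)}-version of Theorem~\ref{thm:Liouville2}. The only difference is in how {\bf (B3)} is obtained: where you re-run the Riccati comparison along radial geodesics and absorb the drift term by Young's inequality, the paper (Corollary~\ref{cor:Vbounded}) simply notes that $\Ric_V^m\geq 0$ together with $|V|\leq C$ gives $\Ric_V^{\infty}\geq -\frac{C^2}{n-m}\,g$, and then cites the Laplacian comparison under a lower $\Ric_V^{\infty}$-bound (Theorem~\ref{thm:LaplacianCompaInfinity}, due to Qiu) to get $\Delta_V r_p\leq C'+Kr_p$ off the cut locus — the same estimate your ODE comparison produces.
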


We now discuss the connection between the Liouville property of harmonic maps and the recurrence property of diffusion processes on manifolds. In the case of harmonic functions, it is well-known that all bounded subharmonic functions must be constant if and only if  the Brownian motion on manifold is recurrent. This  remains true for general $V$-subharmonic functions and the $\Delta_V$-diffusion 
process on manifolds.  To help the reader who is not familiar in this result, let us recall some notions and facts in
potential theory.  Let ${\bf X}^V=(\Omega, X_t,{\P}_x)$ be the $\Delta_V$-diffusion process associated to
the $V$-Laplacian $\Delta_V$ and $\zeta:=\inf\{t>0\mid X_t\notin M\}$ its life time (see Section~\ref{sec:diffusion} below). A Borel function $g$ on $M$ is said to be \emph{excessive} if
$g$ is non-negative on $M$ and
${\E}_x[g(X_t):t<\zeta]\leq g(x)$ for any $t>0$ and $\lim_{t\to0}
{\E}_x[g(X_t):t<\zeta]=g(x)$  and $x\in M$.
${\bf X}^V$ is said to be \emph{recurrent} if
any excessive function is constant (see \cite[(2.4) Proposition]{Get:TranRec}). Our third main result is the following:

\begin{thm}\label{thm:Liouville3}
Let $(M,g)$, $(N,h)$ be two complete  Riemannian manifolds.
Suppose that the $\Delta_V$-diffusion process ${\bf X}^V$ is recurrent and ${\rm Sect}_N\leq\kappa$ with $\kappa>0$.
Let $u:M\to N$ be a $V$-harmonic map whose range is contained in a regular geodesic ball $B_R(o)$.
Then $u$ is a constant map.
\end{thm}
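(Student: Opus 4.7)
The plan is to compose $u$ with a strictly convex function on the target ball to produce a bounded $\Delta_V$-subharmonic function on $M$, and then to exploit the recurrence of ${\bf X}^V$ to conclude that such a function must be constant. Fix the auxiliary function $\varphi(y):=1-\cos(\sqrt{\kappa}\,{\sf d}_N(y,o))$ on the regular geodesic ball $B_R(o)$, which is smooth there because $B_R(o)\cap\mathrm{Cut}(o)=\emptyset$ and $R<\pi/(2\sqrt{\kappa})$. Hessian comparison under ${\rm Sect}_N\leq\kappa$ gives ${\rm Hess}\,r\geq\sqrt{\kappa}\cot(\sqrt{\kappa}\,r)(g_N-\d r\otimes\d r)$ with $r(y):={\sf d}_N(y,o)$, and a direct computation from ${\rm Hess}\,\varphi=\varphi''(r)\,\d r\otimes\d r+\varphi'(r)\,{\rm Hess}\,r$ yields
\[{\rm Hess}\,\varphi\geq\kappa\cos(\sqrt{\kappa}\,r)\,g_N\geq c_0\,g_N\quad\text{on }B_R(o),\qquad c_0:=\kappa\cos(\sqrt{\kappa}R)>0.\]
Combining the chain rule $\Delta_V(\varphi\circ u)={\rm Hess}\,\varphi(\d u,\d u)+\d\varphi(\tau_V(u))$ with $\tau_V(u)=0$, the function $h:=\varphi\circ u$ is non-negative, bounded by $C:=1-\cos(\sqrt{\kappa}R)$, and satisfies
\[\Delta_V h\geq c_0\,|\d u|^2\geq 0\quad\text{on }M.\]

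Next I convert $h$ into an excessive function for ${\bf X}^V$. Applying It\^o's formula to $h\in C^\infty(M)$ along the $\Delta_V$-diffusion, stopped at the exit times $\tau_k$ from an exhaustion $U_k\uparrow M$ by relatively compact open sets, gives $\E_x[h(X_{t\wedge\tau_k})]\geq h(x)$. Because $0\leq h\leq C$, splitting this expectation into the contributions on $\{t\leq\tau_k\}$ and $\{t>\tau_k\}$ and letting $k\to\infty$ (so $\tau_k\uparrow\zeta$) yields
\[\E_x\bigl[h(X_t):t<\zeta\bigr]\geq h(x)-C\,\P_x(\zeta\leq t),\]
which is equivalent to $\E_x[(C-h)(X_t):t<\zeta]\leq C-h(x)$. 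Path-continuity of ${\bf X}^V$ together with bounded convergence furnishes $\lim_{t\downarrow 0}\E_x[(C-h)(X_t):t<\zeta]=C-h(x)$, so the non-negative bounded function $C-h$ is excessive for ${\bf X}^V$. Recurrence, via the criterion of \cite[(2.4) Proposition]{Get:TranRec} recalled in the excerpt, forces $C-h$, and hence $h=\varphi\circ u$, to be constant.

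From $\varphi\circ u\equiv\text{const}$ we read off $\Delta_V h\equiv 0$, and the pointwise inequality $\Delta_V h\geq c_0|\d u|^2$ then gives $\d u\equiv 0$, so $u$ is constant on the connected manifold $M$. The one delicate point is justifying that a bounded $C^\infty$-function $h$ with $\Delta_V h\geq 0$ gives rise to an excessive function $C-h$ for ${\bf X}^V$ when the diffusion may be non-conservative; the exhaustion argument together with the uniform bound on $h$ controls the contribution of paths exiting any relatively compact set and dispatches this point. Every other step is a direct consequence of Hessian comparison on a regular geodesic ball and the composition rule for $\Delta_V$ along a $V$-harmonic map, so no curvature assumption on $(M,g)$ or growth assumption on $V$ is required beyond the recurrence of ${\bf X}^V$ itself.
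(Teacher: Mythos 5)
Your proof is correct, and it takes a genuinely different route from the paper's. The paper composes $u$ with Kendall's convex function $\Phi$ on $B_R(o)\times B_R(o)$ (Lemma~9.1), whose convexity inequality only yields $\Delta_V\Phi(u,q)\geq 0$ with no quantitative lower bound in terms of $|\d u|^2$; after recurrence forces $\Phi(u,q)$ to be constant, the paper must therefore finish with a geometric argument, intersecting two geodesic spheres $\partial B_r(o)$ and $\partial B_{\tilde r}(\hat o)$ to pin down $u(M)$ to a single point. You instead use the classical Hildebrandt--Jost--Widman/Choi function $\varphi=1-\cos(\sqrt{\kappa}\,{\sf d}_N(\cdot,o))$, whose Hessian comparison gives the uniform bound ${\rm Hess}\,\varphi\geq\kappa\cos(\sqrt{\kappa}R)\,g_N>0$ on the regular ball, so the composition formula with $\tau_V(u)=0$ yields $\Delta_V(\varphi\circ u)\geq c_0|\d u|^2$; once recurrence makes $\varphi\circ u$ constant, $\d u\equiv 0$ follows immediately with no further geometry. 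Your conversion of the bounded $V$-subharmonic function into an excessive function via the exhaustion and the uniform bound is the same mechanism as the paper's Lemma~9.3 (and your care about non-conservativeness is actually superfluous, since recurrence already forces $\P_x(\zeta=+\infty)=1$, as the paper notes via Getoor). What the paper's heavier machinery buys is reusability: the $\Phi$-based submartingale argument is the one that also drives the proof of Theorem~\ref{thm:Liouville2}, where recurrence is unavailable and one must instead analyze the a.s.\ limit of $u(X_{\tau_{G_n}})$ and the invariant $\sigma$-field. For Theorem~\ref{thm:Liouville3} alone, your argument is shorter and more self-contained.
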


\begin{remark}\label{rem:Liouville3} 
{\rm We give the following remark.

\begin{enumerate}

\item As far as we know, Theorem \ref{thm:Liouville1}, Theorem~\ref{thm:Liouville2} and Theorem ~\ref{thm:Liouville3}  are new in the literature, even in the symmetric case 
$V=\nabla f$ for some $f\in C^2(M)$. 

\item Under ${\rm Ric}_V^m\geq0$, the condition {\bf (A3)} implies {\bf (B3)} (see 
Theorem~\ref{thm:(A)0}). 
Moreover, the statement of Theorem~\ref{thm:Liouville2} remains valid by replacing {\bf (A3)} 
with {\bf (B3)}, because Theorem~\ref{thm:Liouville2} is  based on Theorem~\ref{thm:Liouville1}. 
\item The assertion of Theorem~\ref{thm:Liouville3} under $V=0$ is well-known. Indeed, if
the Brownian motion ${\bf X}^V$ is recurrent, then every bounded harmonic function is constant. So the bounded Liouville property into regular geodesic balls of
Riemannian manifolds having  positive upper sectional curvature bounds holds by
Kendall~\cite{Kend:martingalemanifold, Kend:probconvI, Kend:hemisphere}.
\item Theorem~\ref{thm:Liouville3} is an extension of \cite[Theorem~5]{ChenJostQiu}, which states that  if $(M, g)$ is a compact Riemannian manifold without boundary and $(N, h)$
is a complete Riemannian manifold with sectional curvature bounded above by a positive
constant $\kappa$, then every $V$-harmonic map $u: M\rightarrow N$ with range in a regular ball  must be a constant map.
We would like to point out that the requirement of the compactness of $M$ is indeed for the validity of the following strong maximum principle: If a smooth $\Delta_V$-harmonic function $u$ attains a maximum $u(x_0)$ at $x_0\in M$, then
$u\equiv u(x_0)$. This is nothing but the bounded Liouville property for $\Delta_V$-harmonic functions. The proof of  Theorem~\ref{thm:Liouville2} shows  that the
  bounded Liouville property holds for $\Delta_V$-harmonic functions on complete Riemannian manifolds with recurrent $\Delta_V$-diffusion process. In view of this, Theorem~\ref{thm:Liouville3} gives 
  the connection between the bounded Liouville property for 
$V$-harmonic maps with range in regular geodesic ball
of complete Riemannian manifolds with positive upper sectional curvature bounds and the recurrence property of $\Delta_V$-diffusion process  on the complete  source 
 Riemannian manifolds.

\end{enumerate}
}
\end{remark}

As a corollary of Theorem~\ref{thm:Liouville3}, the recurrence of $\Delta_f$-diffusion process on complete Riemannian surface proved in \cite[Theorem~2.6]{KL}
under the lower boundedness of $f$ and ${\Ric}_f^1\geq0$
 leads us to the following:
\begin{cor}\label{cor:Liouville3}
Let $(M,g)$ be a \,complete  Riemannian surface with a lower bounded function $f\in C^2(M)$, and $(N,h)$ a complete smooth Riemannian manifold. Assume ${\Ric}_f^1\geq0$ and ${\rm Sect}_N\leq\kappa$ with $\kappa>0$.
Let $u:M\to N$ be an $f$-harmonic map whose range is contained in a regular geodesic ball $B_R(o)$.
Then $u$ is a constant map.
\end{cor}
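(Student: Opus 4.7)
The plan is to deduce the corollary directly from Theorem~\ref{thm:Liouville3} by verifying its hypotheses for $V=\nabla f$. Since $f\in C^2(M)$, any $f$-harmonic map $u:M\to N$ is, by definition, a $V$-harmonic map with $V=\nabla f$, so the target-side hypothesis (range in a regular geodesic ball $B_R(o)$ of $(N,h)$ with ${\rm Sect}_N\leq\kappa$ for $\kappa>0$) is already part of our assumption. Hence the only thing to check is the source-side hypothesis, namely, the recurrence of the $\Delta_V$-diffusion process ${\bf X}^V={\bf X}^{\nabla f}$, which is the $\Delta_f$-diffusion process on $(M,g)$.

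First I would invoke \cite[Theorem~2.6]{KL}, which was explicitly cited in the paragraph preceding the corollary. The setting there is exactly ours: $(M,g)$ is a complete Riemannian surface (i.e.\ $n=2$), $f\in C^2(M)$ is bounded from below, and ${\Ric}_f^1\geq 0$. Under these three conditions \cite[Theorem~2.6]{KL} yields the recurrence of the $\Delta_f$-diffusion process on $M$. All three hypotheses appear verbatim in the statement of Corollary~\ref{cor:Liouville3}, so there is nothing extra to verify.

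With recurrence in hand, I would then apply Theorem~\ref{thm:Liouville3} with $V=\nabla f$: the assumption ${\rm Sect}_N\leq\kappa$ with $\kappa>0$, the regular-geodesic-ball condition on the image of $u$, and the recurrence of ${\bf X}^V$ are all satisfied. Theorem~\ref{thm:Liouville3} then forces $u$ to be constant. No extra growth assumption on $V=\nabla f$ is needed, since Theorem~\ref{thm:Liouville3} is stated purely in terms of the recurrence of the diffusion, which absorbs all analytic/geometric information about the weight $f$.

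In short, the corollary is a direct composition of two external ingredients, and the main (and only) conceptual point is the recognition that the surface assumption $n=2$ together with ${\Ric}_f^1\geq 0$ and lower boundedness of $f$ are precisely what \cite[Theorem~2.6]{KL} needs to produce recurrence. There is no genuine obstacle: once recurrence is quoted, Theorem~\ref{thm:Liouville3} finishes the argument immediately.
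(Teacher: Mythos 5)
Your proposal is correct and follows exactly the paper's own (implicit) argument: the paper derives Corollary~\ref{cor:Liouville3} precisely by quoting the recurrence of the $\Delta_f$-diffusion process from \cite[Theorem~2.6]{KL} under $n=2$, lower boundedness of $f$, and ${\Ric}_f^1\geq 0$, and then applying Theorem~\ref{thm:Liouville3} with $V=\nabla f$. Nothing is missing.
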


\begin{example}\label{ex:Liouville1}
{\rm Let $M=\R^n$ be an $n$-dimensional Euclidean space. Consider a smooth lower bounded function 
$f(x):=(n-m)\log(n-m+|x|^2)$ for $m\in\,]-\infty,\,1\,]$. Then we see that for $x,v\in\R^n$,
\begin{align*}
{\Ric}_f^{\infty}(v,v)(x)&=\frac{2(n-m)}{(n-m+|x|^2)^2}\left((n-m+|x|^2)|v|^2-\langle x,v\rangle^2 
\right),\\
{\Ric}_f^m(v,v)(x)&=\frac{2(n-m)}{(n-m+|x|^2)^2}\left((n-m+|x|^2)|v|^2-\langle x,v\rangle^2\right)\geq0.
\end{align*}
When $f(x):=n\log(n+|x|^2)$, then ${\Ric}_f^0(v,v)(x)\geq0$, hence 
one can apply Theorem~\ref{thm:Liouville1} for any $f$-harmonic maps having {\bf(G1)} condition into Hadamard manifolds. 
In this case, the condition $\text{\bf (A1)}^{*}$ below holds, because $f$ is radially increasing and 
rotationally symmetric around $0$. 
Since $\text{\bf (A1)}^{*}$ implies $\text{\bf (A3)}^{*}$, hence ${\bf (B3)}$ holds (see the prof of Theorem~\ref{thm:(A)0} below), Theorem~\ref{thm:Liouville2} can be applied. 
}
\end{example}

\begin{example}\label{ex:Liouville3}
{\rm Let $M=\R^n$ be an $n$-dimensional Euclidean space. Consider a smooth lower bounded function $f(x):=(n-1)\log(n-1+|x|^2)$.
Then we see that for $x,v\in\R^n$
\begin{align*}
{\Ric}_f^1(v,v)(x)&=\frac{2(n-1)}{(n-1+|x|^2)^2}\left((n-1+|x|^2)|v|^2-\langle x,v\rangle^2 \right)\geq0.
\end{align*}
Then by \cite[Theorem~2.6]{KL}, the $\Delta_f$-diffusion process is recurrent
under the lower boundedness of $f$, $n=2$ and ${\Ric}_f^1\geq0$. In this case, any $f$-harmonic map into regular geodesic ball in a complete
Riemannian manifold $(N,h)$ with ${\rm Sect}_N\leq \kappa$ ($\kappa>0$) is a constant map by 
Theorem~\ref{thm:Liouville3}.
}
\end{example}

When $N=\R$, Theorem~\ref{thm:Liouville1} extends Theorem~\ref{thm:Brighton}, because of the order ${\Ric}_V^m\geq{\Ric}_V^{\infty}$ for $m\in]-\infty,\,0\,]$. Moreover, it extends the classical S.~Y.~Cheng's Liouville Theorem~\cite{SYCheng}  for sublinear growth harmonic maps between Riemannian manifolds. Indeed,  based on the Laplacian comparison theorem due to Bakry and Qian \cite{BQ}, S.~Y.~Cheng's Liouville Theorem for sublinear growth $f$-harmonic functions has been extended in a non-submitted paper in the 2007 Habilitation Thesis of the third named author 
to symmetric diffusion operator $\Delta_f$ on complete
Riemannian manifolds with $\Ric_f^m\geq 0$ for $m\in[\,n,\,+\infty\,[$. The content of \cite{SYCheng} is extended to the setting of metric measure space (see Hua-Kell-Xia \cite{HuaKellXia} and Kuwada-Kuwae~\cite{KK}) for sublinear growth harmonic functions. But our result is not covered by them.
Theorem~\ref{thm:Liouville2} also extends the classical results on
the bounded Liouville property for harmonic maps from complete smooth Riemannian manifolds with non-negative Ricci curvature into regular geodesic balls of
complete smooth Riemannian manifolds with positive upper sectional curvature bounds
proved by Hildebrandt-Jost-Wideman~\cite{HildtJostWidemann}
and Choi~\cite{Choi}. 
Moreover, Theorems~\ref{thm:Liouville1} and \ref{thm:Liouville2}  extend the bounded Liouville properties for $V$-harmonic maps proved in
\cite[Theorem~2]{ChenJostQiu,Qiu}
under ${\rm Ric}_V^{\infty}\geq0$. Note that there was no such Liouville theorem for sublinear growth $V$-harmonic maps into Cartan-Hadamard manifolds in the past literature.
In Theorem~\ref{thm:Liouville1}, we do not assume the boundedness condition for the $V$-harmonic map 
as in \cite[Theorem~2]{ChenJostQiu,Qiu}
when $(N,h)$ is a Cartan-Hadamard manifold. As in Remark~\ref{rem:Liouville3}(iii), Theorem~\ref{thm:Liouville3} is an extension of  \cite[Theorem~5]{ChenJostQiu}. Note that Corollary~\ref{cor:Liouville3} can not be deduced from
Theorem~\ref{thm:Liouville2}. 

Our proof of Theorem~\ref{thm:Liouville1} based on the method by \cite{Staff:Liouville} is stochastic, but the proof in
\cite[\S 2 Lower bound]{Staff:Liouville} is questionable, 
because in \cite{Staff:Liouville} there is no explicit proof for the sublinear 
growth condition for the norm $|\d u|$ of 
differential map $\d u:TM\to TN$ for harmonic map $u:M\to N$,  
and
the method of \cite[\S3 Upper bound]{Staff:Liouville} does not work well by using the Laplacian comparison theorem
under ${\Ric}_V^m\geq0$ with $m\in ]-\infty,\,1\,]$ (see Section~\ref{sec:(A)} 
for such Laplacian comparison theorems). 
To overcome this difficulty,  
in addition to the non-negative bound for $(m,V)$-Ricci curvature, 
we consider conditions {\bf (A{\boldmath$i$})} for $V$-Laplacian acting on the radial function (see Section~\ref{sec:(A)}). 
Moreover, we prove that for each $i=1,2,3$ the growth condition {\bf (G{\boldmath$i$})} for the $V$-harmonic map $u$ implies the same growth condition {\bf (G{\boldmath$i$})} for $|\d u|$ under {\bf (B3)} (see Lemma~\ref{lem:lineargrowth}).  
This fills the incompleteness of the arguments in
\cite[\S2 Lower bound]{Staff:Liouville}. The proof of
Theorem~\ref{thm:Liouville2} is based on Theorem~\ref{thm:Liouville1} and
the inheritance of the bounded Liouville property for functions to maps firstly
shown by W.~Kendall (see \cite{Kend:martingalemanifold, Kend:probconvI, Kend:hemisphere}). Finally, we emphasize that our results obtained by stochastic method are 
new, that is, there were no such results obtained by geometric analysis in the past literature.

\section{Laplacian comparison theorem}\label{sec:(A)}
Let $V$ be a $C^1$-vector field on a complete Riemannian manifold $(M,g)$. 
In general, $V$ is not necessarily a gradient type like  
$V=\nabla f$. However, we can make a function $f_V$ which plays a similar role like $f$ provided 
$V=\nabla f$.
Define 
\begin{align*}
V_{\gamma}(r):&=\int_0^r\langle V_{\gamma_s},\dot{\gamma}_s\rangle\d s
\end{align*}
for a unit speed geodesic $\gamma:[\,0,\,+\infty\,[\to M$, 
and 
\begin{align}
f_V(x):&=\inf\left\{\left.
\int_0^{r_p(x)}\langle V_{\gamma_s},\dot{\gamma}_s\rangle \d s
\;\right| \left.\begin{array}{ll}&\gamma: \text{unit speed geodesic}\\ & \gamma_0=p, \gamma_{r_p(x)}=x\end{array}\right.\right\}.\label{eq:ModifiedPhi}
\end{align} 
Note that $V_{\gamma}$ depends on the choice of unit speed geodesic $\gamma$, and 
$f_V(x)$ depends on $p$ with $f_V(p)=0$ and it is well-defined for $x\in M$. 
It is easy to see that $f_V(x)=\int_0^{r_p(x)}Vr_p(\gamma_s)\d s$ for $x\notin {\rm Cut}(p)$, where 
$\gamma$ is the unique unit speed geodesic with $\gamma_0=p$ and $\gamma_{r_p(x)}=x$. 
Moreover, for $x\notin {\rm Cut}(p)$, $f_V(x)=V_{\gamma}(r_p(x))$ for the unique unit speed geodesic $\gamma$ with $\gamma_0=p$ and $\gamma_{r_p(x)}=x$. Hence $f_V(\gamma_t)=V_{\gamma}(t)$ for any unit speed geodesic $\gamma$ with $\gamma_0=p$ and $\gamma_t\notin {\rm Cut}(p)$.
When $V=\nabla f$ is a gradient vector field for some $ f\in C^2(M)$, then one can see 
\begin{align*}
V_{\gamma}(t)&=\int_0^t\langle \nabla f,\dot{\gamma}_s\rangle\d s
=\int_0^t\frac{\d}{\d s}f(\gamma_s)\d s=f(\gamma_t)-f(\gamma_0).
\end{align*}
Throughout this section, 
we fix a point $p\in M$ and a constant $C=C_p>0$, which may depend on $p$. 
For $x\in M$, we define
\begin{align*}
s_p(x):=\inf\left\{\left. C_p 
\int_0^{r_p(x)}e^{-\frac{2V_{\gamma}(t)}{n-m}}\d t\;\right|\left.\begin{array}{ll}&\gamma: \text{unit speed geodesic}\\ & \gamma_0=p, \gamma_{r_p(x)}=x\end{array}\right.\right\}.
\end{align*}
When $V=\nabla f$ with $f\in C^2(M)$ and $C_p=\exp\left(-\frac{2f(p)}{n-m} \right)$, then 
\begin{align*}
s_p(x)=\inf\left\{\left.  
\int_0^{r_p(x)}e^{-\frac{2f(\gamma_t)}{n-m}}\d t\;\right|\left.\begin{array}{ll}&\gamma: \text{unit speed geodesic}\\ & \gamma_0=p, \gamma_{r_p(x)}=x\end{array}\right.\right\}.
\end{align*}

Since $(M,g)$ is complete, $s_p(x)$ is finite  and 
well-defined from the basic properties of Riemannian geodesics.

\medskip

The following theorem is a special case of \cite[Theorem~2.5]
{KS} (see also \cite[Theorem~2.3]{KSa}).

\begin{thm}[Laplacian Comparison Theorem]\label{thm:GlobalLapComp}
Fix a point $p\in M$, $\kappa\in\R$ and $m\in]-\infty, \,1\,]$. 
Suppose 
\begin{align}
{\Ric}_V^m\geq (n-m)\kappa e^{-\frac{4f_V(x)}{n-m}} C_p^2 g\label{eq:RicciLowerf_V}
\end{align} 
for $x\notin {\rm Cut}(p)\cup\{p\}$. 
Then  
\begin{align}
(\Delta_V r_p)(x)\leq (n-m)\cot_{\kappa}(s_p(x))
e^{-\frac{2f_V(x)}{n-m}}  C_p \label{eq:GloLapComp}
\end{align}
holds for such $x\notin({\rm Cut}(p)\cup \{p\})$. 
When $V=\nabla f$ for some $f\in C^2(M)$ and $C_p=\exp\left(-\frac{2f(p)}{n-m}\right)$, then 
\begin{align}
{\Ric}_V^m\geq (n-m)\kappa e^{-\frac{4f(x)}{n-m}}g\label{eq:RicciLowerf}
\end{align} 
for $m\in]-\infty,\,1\,]$
yields that 
\begin{align}
(\Delta_V r_p)(x)\leq (n-m)\cot_{\kappa}(s_p(x))
e^{-\frac{2f(x)}{n-m}} \label{eq:GloLapCompGradf}
\end{align}
holds for $x\notin({\rm Cut}(p)\cup \{p\})$. Here $\cot_{\kappa}(r)=\frac{\mathfrak{s}_{\kappa}'(r)}{\mathfrak{s}_{\kappa}(r)}$ with 
\begin{align*}
\mathfrak{s}_{\kappa}(r):=\left\{\begin{array}{cc}\frac{\sin(\sqrt{\kappa}r)}{\sqrt{\kappa}} & \kappa>0, \\ r & \kappa=0, \\\frac{\sinh(\sqrt{-\kappa}r)}{\sqrt{-\kappa}} & \kappa<0.\end{array}\right.
\end{align*}
\end{thm}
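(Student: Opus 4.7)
The plan is to reduce the comparison to a one-dimensional Riccati inequality along a minimizing geodesic, then to match this with the model Riccati for a space form of dimension $n-m$ by means of an exponential conformal reparameterization of arc length.

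First, I would fix $x\notin \mathrm{Cut}(p)\cup\{p\}$ and let $\gamma\colon[0,r_p(x)]\to M$ be the unique minimizing unit-speed geodesic from $p$ to $x$; on this geodesic $\nabla r_p=\dot\gamma$ and $f_V(\gamma_t)=V_\gamma(t)=\int_0^t a(u)\,\d u$ with $a(t):=\langle V_{\gamma_t},\dot\gamma_t\rangle$. Applying the $V$-Bochner identity
\[
\tfrac{1}{2}\Delta_V|\nabla u|^2=|\nabla^2 u|^2+\langle\nabla\Delta_V u,\nabla u\rangle+\Ric_V^{\infty}(\nabla u,\nabla u)
\]
to $u=r_p$ (using $\Delta_V|\nabla r_p|^2=0$ since $|\nabla r_p|\equiv 1$) and the Cauchy-Schwarz bound $|\nabla^2 r_p|^2\geq (\Delta r_p)^2/(n-1)$ on the $(n-1)$-dimensional orthogonal complement of $\dot\gamma$, and rewriting $\Ric_V^{\infty}=\Ric_V^m-V^*\otimes V^*/(n-m)$, I obtain for $\phi(t):=(\Delta_V r_p)(\gamma_t)$ the Riccati-type inequality
\[
\phi'(t)+\frac{(\phi(t)+a(t))^2}{n-1}-\frac{a(t)^2}{n-m}+\Ric_V^m(\dot\gamma_t,\dot\gamma_t)\leq 0.
\]

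Next, I introduce the conformal arc length $s(t):=s_p(\gamma_t)=C_p\int_0^t e^{-2V_\gamma(u)/(n-m)}\,\d u$ and the rescaled mean-curvature-like function
\[
\Phi(s):=\phi(t(s))\,e^{2V_\gamma(t(s))/(n-m)}/C_p.
\]
Using $\d s/\d t=C_p e^{-2V_\gamma/(n-m)}$ and the hypothesis $\Ric_V^m(\dot\gamma,\dot\gamma)\geq (n-m)\kappa(\d s/\d t)^2$, a direct manipulation transforms the Riccati inequality for $\phi$ into
\[
\frac{\d\Phi}{\d s}+\frac{\Phi^2}{n-m}+(n-m)\kappa+\frac{1-m}{(n-1)(n-m)}\bigl(b(s)+\Phi(s)\bigr)^2\leq 0,
\]
where $b(s)=a(t(s))/(C_p e^{-2V_\gamma(t(s))/(n-m)})$. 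The underlying algebraic identity is the rearrangement $(\Phi+b)^2/(n-1)-b^2/(n-m)=\Phi^2/(n-m)+\frac{1-m}{(n-1)(n-m)}(b+\Phi)^2-2b\Phi/(n-m)-b^2/(n-m)$, which regroups the Bochner-denominator $n-1$ into the target denominator $n-m$ at the cost of a single residual quadratic. Since $m\leq 1$ this residual term has the correct sign and can be discarded, leaving the clean model inequality
\[
\frac{\d\Phi}{\d s}+\frac{\Phi^2}{n-m}+(n-m)\kappa\leq 0.
\]

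Finally, the model function $\tilde\Phi(s):=(n-m)\cot_{\kappa}(s)$ satisfies the corresponding equation with equality and has asymptotic expansion $(n-m)/s$ at $s=0^+$, whereas from the standard expansion $(\Delta_V r_p)(\gamma_t)\sim (n-1)/t$ together with $s(t)\sim C_p t$ we get $\Phi(s)\sim (n-1)/s\leq (n-m)/s$. A standard Riccati comparison argument applied to $\Phi-\tilde\Phi$, using that $\Phi+\tilde\Phi>0$ on an initial interval, then yields $\Phi(s)\leq\tilde\Phi(s)$ throughout $(0,s_p(x)]$; unwinding the definition of $\Phi$ at $t=r_p(x)$ gives exactly the stated inequality. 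For the gradient case $V=\nabla f$ one has $V_\gamma(t)=f(\gamma_t)-f(p)$, independent of $\gamma$, so $f_V(x)=f(x)-f(p)$; combined with $C_p=e^{-2f(p)/(n-m)}$ this absorbs cleanly to yield the $f$-version.

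The principal technical obstacle is precisely the algebraic rearrangement in the second paragraph: the Bochner formula forces the denominator $n-1$, whereas the desired comparison requires $n-m$, and the identification only succeeds modulo a nonnegative residual whose sign is controlled by the factor $1-m$. This is what constrains the argument to $m\leq 1$; for $m\in\,]\,1,n\,[\,$ the residual would have the wrong sign and the model Riccati inequality fails.
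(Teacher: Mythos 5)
Your proposal is essentially correct, and it supplies an argument where the paper itself gives none: in the text this theorem is simply quoted as a special case of \cite[Theorem~2.5]{KS} (see also \cite[Theorem~2.3]{KSa}), so there is no in-paper proof to compare against. The route you take --- Bochner/Riccati along the minimizing geodesic, rewriting $\Ric_V^{\infty}=\Ric_V^m-\frac{V^*\otimes V^*}{n-m}$, then the conformal reparameterization $s=s_p(\gamma_t)$ with $\Phi=\phi/s'$ and comparison against $(n-m)\cot_{\kappa}$ --- is precisely the mechanism behind the cited results (the ``$\varepsilon$-range'' technique of Wylie--Yeroshkin and Lu--Minguzzi--Ohta, here with $\varepsilon=0$). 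I checked the key computation: with $s''=-\frac{2a}{n-m}s'$ the chain rule produces a term $-\frac{2b\Phi}{n-m}$ in $\frac{\d\Phi}{\d s}$ which exactly cancels the cross term from expanding $\frac{(\Phi+b)^2}{n-m}$, and your displayed transformed inequality
\begin{align*}
\frac{\d\Phi}{\d s}+\frac{\Phi^2}{n-m}+(n-m)\kappa+\frac{1-m}{(n-1)(n-m)}\bigl(b+\Phi\bigr)^2\leq 0
\end{align*}
is correct, with the residual nonnegative exactly when $m\leq 1$. The initial asymptotics $\Phi\sim (n-1)/s\leq (n-m)/s\sim\tilde\Phi$ and the unwinding at $t=r_p(x)$, as well as the absorption of $C_p=e^{-2f(p)/(n-m)}$ in the gradient case, all check out.

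Two small blemishes. First, the ``underlying algebraic identity'' you record in the second paragraph is not an identity as written: the correct rearrangement is
\begin{align*}
\frac{(\Phi+b)^2}{n-1}-\frac{b^2}{n-m}=\frac{\Phi^2}{n-m}+\frac{2b\Phi}{n-m}+\frac{1-m}{(n-1)(n-m)}(\Phi+b)^2,
\end{align*}
and the term $+\frac{2b\Phi}{n-m}$ here is what cancels against the $-\frac{2b\Phi}{n-m}$ coming from $s''$; your version has the signs of the cross terms garbled, although the final displayed inequality (which is all that is used) is right. Second, for $\kappa>0$ you should note that the Riccati comparison also forces $s_p(x)\leq \pi/\sqrt{\kappa}$ for $x\notin\mathrm{Cut}(p)$ (otherwise $\Phi$ would blow down to $-\infty$ at an interior point, contradicting smoothness of $r_p$ there), so that $\cot_{\kappa}(s_p(x))$ is actually defined; this is standard but worth a sentence.
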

Recall that $f_V$ depends on $p$. 
We define $\overline{f}_V(r):=\sup_{B_r(p)}f_V$ and $\underline{f}_V(r):=\inf_{B_r(p)}f_V$. Then $\overline{f}_V(r_p(x))$ and $\underline{f}_V(r_p(x))$ are rotationally symmetric functions around $p$. 
We now consider the following conditions, which are weaker than the conditions 
{\bf (A1)}, {\bf (A2)} and {\bf (A3)}, respectively:
\begin{enumerate}
\item[{\bf (A1)$^{*}$}\hspace{-0.2cm}]\; $f_V$ is bounded, or $t\mapsto f_V(\gamma_t)$ is increasing for any unit speed geodesic $\gamma$ with $\gamma_0=p$, 
\item[{\bf (A2)$^{*}$}\hspace{-0.2cm}]\; 
There exists $D\geq1$ such that $\overline{f}_V(r)-\underline{f}_V(r)\leq\frac{n-m}{2}\log D(1+r)$.
\item[{\bf (A3)$^{*}$}\hspace{-0.2cm}]\; 
There exists $D\geq1$ such that $\overline{f}_V(r)-\underline{f}_V(r)\leq\frac{n-m}{2}\log D(1+r^2)$.\end{enumerate}
Indeed, in view of $|f_V(x)|\leq\int_0^{r_p(x)}v(s)\d s$, 
$f_V(\gamma_t)=\int_0^t\langle V,\dot\gamma_s\rangle_{\gamma_s}\d s$ for unit speed geodesic 
$\gamma$ joining $\gamma_0=p$ and $\gamma_{r_p(x)}=x\notin{\rm Cut}(p)$, 
and $-\int_0^rv(s)\d s\leq \underline{f}_V(r)\leq0\leq\overline{f}_V(r)\leq \int_0^rv(s)\d s$, we see that 
$\text{\bf (A1)}$, (resp.~$\text{\bf (A2)}$, $\text{\bf (A3)}$) implies $\text{\bf (A1)}^{*}$, (resp.~$\text{\bf (A2)}^{*}$, $\text{\bf (A3)}^{*}$).

\medskip

Recall the condition $\text{\bf (B\text{\boldmath$i$})}$ for each $i=1,2,3$. 

\begin{lem}\label{lem:ConditionsEquivalence}
The condition {\bf (B3)} is equivalent to the following condition 
$\text{\bf (B3)}^{*}$.
\begin{enumerate}
\item[$\text{\bf (B3)}^{*}$] There exist $a\in]\,0,\,{\color{black}{{\sf d}}}(p,{\rm Cut}(p))\,[$ and $D\geq1$ such that $\Delta_Vr_p(x)\leq D(1+r_p(x))$ for all $x\in M\setminus ({\rm Cut}(p)\cup B_a(p))$.
\end{enumerate}
\end{lem}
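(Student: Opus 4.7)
The plan is to prove both implications by purely elementary estimates on $r_p \Delta_V r_p$ versus $\Delta_V r_p$, splitting the manifold (minus the cut locus) into the regions $\{r_p\geq a\}$ and $\{r_p<a\}$.

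For the direction $\text{\bf (B3)} \Rightarrow \text{\bf (B3)}^{*}$, I would fix any $a\in{}]\,0,\,{\sf d}(p,{\rm Cut}(p))\,[$ (which exists since $(M,g)$ is complete, so the cut locus does not meet $\{p\}$). For $x\in M\setminus({\rm Cut}(p)\cup B_a(p))$ one has $r_p(x)\geq a>0$, so dividing the inequality in $\text{\bf (B3)}$ by $r_p(x)$ yields
\begin{align*}
\Delta_V r_p(x)\leq \frac{D(1+r_p(x)^2)}{r_p(x)}=\frac{D}{r_p(x)}+Dr_p(x)\leq \frac{D}{a}+Dr_p(x)\leq D'(1+r_p(x)),
\end{align*}
where $D':=\max\{D/a,D\}\geq 1$. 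This gives $\text{\bf (B3)}^{*}$.

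For the reverse direction $\text{\bf (B3)}^{*}\Rightarrow \text{\bf (B3)}$, I would split into two regions. On $M\setminus({\rm Cut}(p)\cup B_a(p))$, multiplying the estimate in $\text{\bf (B3)}^{*}$ by $r_p(x)$ produces
\begin{align*}
r_p(x)\Delta_V r_p(x)\leq Dr_p(x)(1+r_p(x))=Dr_p(x)+Dr_p(x)^2\leq \frac{D}{a}r_p(x)^2+Dr_p(x)^2,
\end{align*}
using $r_p(x)\geq a$ in the last step, which is bounded by $D''(1+r_p(x)^2)$ for a suitable $D''\geq 1$. On the remaining region $B_a(p)\setminus ({\rm Cut}(p)\cup\{p\})$, the radial function $r_p$ is smooth and the classical asymptotics give $\Delta r_p(x)\sim (n-1)/r_p(x)$ as $x\to p$, so $r_p(x)\Delta r_p(x)$ extends continuously to $p$ with value $n-1$. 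Combined with the $C^1$ continuity of $V$, which yields $|r_p(x)(Vr_p)(x)|\leq r_p(x)|V|_x\to 0$ as $x\to p$, one sees that $r_p\Delta_V r_p$ is bounded on the compact set $\overline{B_a(p)}\setminus({\rm Cut}(p)\cup\{p\})$ (after removing a small punctured neighborhood of $p$ and taking a limit). Enlarging $D''$ if necessary then yields $r_p(x)\Delta_Vr_p(x)\leq D''(1+r_p(x)^2)$ on this region as well, completing $\text{\bf (B3)}$.

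The only minor delicacy is the behavior of $r_p\Delta_Vr_p$ near the base point $p$, which is handled by the standard fact that $r_p\Delta r_p$ admits a finite limit at $p$ together with the local boundedness of $V$. There is no serious obstacle: the whole lemma is a change of form of the factor $r_p(x)$ on the two sides of the inequality, and the inadmissible region $B_a(p)$ in $\text{\bf (B3)}^{*}$ is precisely what lets one bypass the singularity of $1/r_p$ at $p$ when passing between the two formulations.
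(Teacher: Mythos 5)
Your proof is correct and takes essentially the same route as the paper's: away from $p$ one passes between the two inequalities by dividing or multiplying by $r_p(x)\geq a$, and the ball $B_a(p)$ is handled via the limit $\lim_{x\to p}r_p(x)\Delta_V r_p(x)=n-1$ together with continuity and compactness of $\overline{B_a(p)}$. The paper's proof is merely a two-sentence sketch of exactly this argument, so no further comparison is needed.
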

\begin{proof}[{\bf Proof}]
It is easy to see that {\bf (B3)} implies $\text{\bf (B3)}^{*}$ by taking any small $a\in]\,0,\,{\color{black}{{\sf d}}}(p,{\rm Cut}(p))\,[$. 
Since $\lim_{x\to p}r_p(x)\Delta_Vr_p(x)=n-1$, taking a sufficiently small $a\in]\,0,\,{\color{black}{{\sf d}}}(p,{\rm Cut}(p))\,[$, we can obtain the converse implication 
$\text{\bf (B3)}^{*}$$\Longrightarrow${\bf (B3)}.   
\end{proof}

\medskip

Theorem~\ref{thm:GlobalLapComp} tells us the following: 
\begin{thm}\label{thm:(A)0}
Suppose ${\rm Ric}_V^m\geq0$ for $m\in]-\infty,\,1\,]$. 
If $\text{\bf (A1)}$ {\rm(}resp.~$\text{\bf (A2)}$, $\text{\bf (A3)}${\rm)} holds, then the condition {\bf (B1)} 
{\rm(}resp.~{\bf (B2)}, {\bf (B3)}{\rm)} holds. 
\end{thm}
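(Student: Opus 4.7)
My plan is to invoke the Laplacian comparison Theorem~\ref{thm:GlobalLapComp} with $\kappa=0$ and then bound the resulting auxiliary quantities $s_p(x)$ and $e^{-2f_V(x)/(n-m)}$ in terms of $r_p(x)$ using the conditions $\text{\bf (A\text{\boldmath$i$})}^{*}$, which by the paragraph preceding Lemma~\ref{lem:ConditionsEquivalence} follow from $\text{\bf (A\text{\boldmath$i$})}$. The curvature hypothesis enters only through this single application of Theorem~\ref{thm:GlobalLapComp}; after that the argument is elementary once the relation $s_p(x)\asymp r_p(x)$ (up to an exponential correction governed by $\overline{f}_V-\underline{f}_V$) is exploited.

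Since ${\rm Ric}_V^m\geq 0$ trivially satisfies the hypothesis \eqref{eq:RicciLowerf_V} of Theorem~\ref{thm:GlobalLapComp} with $\kappa=0$ for any $C_p>0$, and $\cot_0(s)=1/s$, I obtain for $x\notin{\rm Cut}(p)\cup\{p\}$,
\begin{equation*}
\Delta_V r_p(x)\;\leq\;\frac{(n-m)\,C_p}{s_p(x)}\,e^{-\frac{2f_V(x)}{n-m}}.
\end{equation*}
Let $\gamma$ be the minimizing unit speed geodesic from $p$ to $x$. Then $\gamma_t\in B_{r_p(x)}(p)$ and $f_V(\gamma_t)=V_\gamma(t)\leq\overline{f}_V(r_p(x))$ for $t\in[\,0,\,r_p(x)\,]$. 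Because $n-m>0$ this yields
\begin{equation*}
s_p(x)\;\geq\;C_p\,r_p(x)\,e^{-\frac{2\overline{f}_V(r_p(x))}{n-m}},
\end{equation*}
which combined with $f_V(x)\geq\underline{f}_V(r_p(x))$ produces the master inequality
\begin{equation}\label{eq:propmaster}
r_p(x)\,\Delta_V r_p(x)\;\leq\;(n-m)\,\exp\!\left(\frac{2\,(\overline{f}_V(r_p(x))-\underline{f}_V(r_p(x)))}{n-m}\right).
\end{equation}

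To close the argument I insert each condition into \eqref{eq:propmaster}. Under $\text{\bf (A2)}^{*}$ (resp.\ $\text{\bf (A3)}^{*}$) the right-hand side is bounded by $(n-m)D(1+r_p(x))$ (resp.\ $(n-m)D(1+r_p(x)^2)$), which is $\text{\bf (B2)}$ (resp.\ $\text{\bf (B3)}$) after renaming the constant. Under the first alternative of $\text{\bf (A1)}^{*}$, $f_V$ is bounded so the exponential in \eqref{eq:propmaster} is bounded uniformly in $x$, giving $\text{\bf (B1)}$. The only real subtlety is the second alternative of $\text{\bf (A1)}^{*}$: if $t\mapsto f_V(\gamma_t)$ is only non-decreasing then $\overline{f}_V-\underline{f}_V$ need not be bounded, so \eqref{eq:propmaster} alone is insufficient. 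I would handle this by using the monotonicity directly in the integrand defining $s_p$, giving $f_V(\gamma_t)\leq f_V(\gamma_{r_p(x)})=f_V(x)$ along the minimizing segment, hence the sharper estimate
\begin{equation*}
s_p(x)\;\geq\;C_p\,r_p(x)\,e^{-\frac{2f_V(x)}{n-m}}
\end{equation*}
holds; inserting this into the Laplacian comparison cancels the exponential and yields the dimension-type bound $r_p(x)\,\Delta_V r_p(x)\leq n-m$, which is $\text{\bf (B1)}$. All these estimates are valid precisely on $M\setminus({\rm Cut}(p)\cup\{p\})$, which is where $\text{\bf (B\text{\boldmath$i$})}$ is required, so no further treatment of the cut locus is necessary, and the main obstacle is identifying this refined estimate in the second alternative of $\text{\bf (A1)}$.
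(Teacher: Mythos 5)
Your proposal is correct and follows essentially the same route as the paper's proof: apply Theorem~\ref{thm:GlobalLapComp} with $\kappa=0$, bound $s_p(x)$ from below along the minimizing geodesic via the estimates on $f_V(\gamma_t)-f_V(x)$ coming from $\text{\bf (A\text{\boldmath$i$})}^{*}$, and read off $\text{\bf (B\text{\boldmath$i$})}$; your separate treatment of the monotone alternative of $\text{\bf (A1)}^{*}$ (using $f_V(\gamma_t)\leq f_V(x)$ to get the clean bound $n-m$) is exactly what the paper does as well.
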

\begin{proof}[\bf Proof]
Note first that $\text{\bf (A1)}$ (resp.~$\text{\bf (A2)}$, $\text{\bf (A3)}$) implies 
$\text{\bf (A1)}^{*}$ (resp.~$\text{\bf (A2)}^{*}$, $\text{\bf (A3)}^{*}$). 
Take $x\notin {\rm Cut}(p)$. Then there exists a unique unit speed geodesic $\gamma_t$ with $\gamma_0=p$ and $\gamma_{r_p(x)}=x$ such that 
\begin{align*}
s_p(x)=C_p\int_0^{r_p(x)}\exp\left(-\frac{2f_V(\gamma_t)}{n-m}\right)\d t.
\end{align*}
It is easy to see that 
\begin{align*}
f_V(\gamma_t)-f_V(x)\leq 
\left\{\begin{array}{ll} 
2\|f_V\|_{\infty}\quad\text{ or }\quad 0 & \text{ under 
$\text{\bf (A1)}^{*}$},
\\
\overline{f}_V(r_p(x))-\underline{f}_V(r_p(x))\leq \frac{n-m}{2}\log D(1+r_p(x))
 & \text{ under $\text{\bf (A2)}^{*}$}, \\
 \overline{f}_V(r_p(x))-\underline{f}_V(r_p(x))\leq \frac{n-m}{2}\log D(1+r_p^2(x))
 & \text{ under $\text{\bf (A3)}^{*}$}.
 \end{array}\right.
\end{align*}

The comparison \eqref{eq:GloLapComp} becomes 
\begin{align*}
r_p(x)\Delta_Vr_p(x)&\leq \frac{n-m}{\frac{1}{r_p(x)}\int_0^{r_p(x)}\exp\left(\frac{2(f_V(x)-f_V(\gamma_t))}{n-m}\right)\d t}\\
&\leq \left\{\begin{array}{lc}
(n-m)e^{\frac{2\|f_V\|_{\infty}}{n-m}}\quad\text{ or }\quad n-m & \text{ under $\text{\bf (A1)}^{*}$},
\\
(n-m)D(1+r_p(x)) & \text{ under $\text{\bf (A2)}^{*}$,}\\
(n-m)D(1+r_p^2(x)) & \text{ under $\text{\bf (A3)}^{*}$} \end{array}\right.
\end{align*}
for $x\notin {\rm Cut}(p)$.
\end{proof}

\begin{thm}\label{thm:(A)-K}
Suppose ${\rm Ric}_V^m\geq- Kg$ with $K>0$ for $m\in]-\infty,\,1\,]$. 
Assume that $f_V$ is bounded. Then we have {\bf (B2)}. 
\end{thm}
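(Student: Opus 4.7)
The plan is to invoke Theorem~\ref{thm:GlobalLapComp} with $\kappa<0$ and a suitably chosen constant $C_p>0$, and then to exploit the boundedness of $f_V$ to linearize the resulting estimate. Write $L:=\|f_V\|_{\infty}<\infty$. Because $n\ge 2$ and $m\le 1$ one has $n-m\ge 1$, so both $e^{-\frac{4f_V(x)}{n-m}}$ and $e^{-\frac{2f_V(x)}{n-m}}$ lie in the fixed interval $[e^{-\frac{4L}{n-m}},\,e^{\frac{4L}{n-m}}]$. I would set $\kappa:=-1$ and
\[
C_p := \sqrt{\tfrac{K}{n-m}}\,e^{\frac{2L}{n-m}},
\]
which makes $(n-m)\kappa\,e^{-\frac{4f_V(x)}{n-m}}C_p^{2}\le -K$ pointwise. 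The hypothesis ${\rm Ric}_V^{m}\ge -Kg$ then implies \eqref{eq:RicciLowerf_V} at every $x\in M$, and Theorem~\ref{thm:GlobalLapComp} delivers
\[
\Delta_V r_p(x)\le (n-m)\coth\!\bigl(s_p(x)\bigr)\,e^{-\frac{2f_V(x)}{n-m}}\,C_p,\qquad x\notin{\rm Cut}(p)\cup\{p\}.
\]

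The next step is to bound $s_p(x)$ from below by a linear multiple of $r_p(x)$. For $x\notin{\rm Cut}(p)$ there is a unique minimizing unit-speed geodesic $\gamma:[0,r_p(x)]\to M$ from $p$ to $x$; every initial segment $\gamma|_{[0,t]}$ is still minimizing with interior disjoint from ${\rm Cut}(p)$, so $f_V(\gamma_t)=V_{\gamma}(t)$ and $|V_{\gamma}(t)|\le L$ for $0\le t\le r_p(x)$. Since this $\gamma$ realises the infimum in the definition of $s_p(x)$,
\[
s_p(x)=C_p\int_0^{r_p(x)}e^{-\frac{2f_V(\gamma_t)}{n-m}}\,\d t\ge C_p\,e^{-\frac{2L}{n-m}}\,r_p(x).
\]

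Finally, I would apply the elementary inequality $\coth(s)\le 1+\tfrac{1}{s}$ for $s>0$ (equivalent to $e^{2s}\ge 1+2s$). Multiplying the Laplacian comparison by $r_p(x)$ and inserting this inequality, the upper bound $e^{-\frac{2f_V(x)}{n-m}}\le e^{\frac{2L}{n-m}}$, and the lower bound on $s_p(x)$ just obtained yields
\[
r_p(x)\,\Delta_V r_p(x)\le (n-m)\,C_p\,e^{\frac{2L}{n-m}}\,r_p(x)+(n-m)\,e^{\frac{4L}{n-m}},
\]
which is of the form $D\,(1+r_p(x))$ for some constant $D\ge 1$ depending only on $n,\,m,\,K$ and $L$. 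This is precisely condition \textbf{(B2)}. The only non-routine ingredient is the linearisation of $\coth(s_p(x))$: under the non-negative Ricci lower bound of Theorem~\ref{thm:(A)0} the factor $\cot_0(s_p)=1/s_p$ cancels against $r_p$ and yields the shapes \textbf{(B1)}--\textbf{(B3)} according to growth of $f_V$, whereas here the large-$s$ asymptotics $\coth(s)\to 1$ produces the additional term linear in $r_p(x)$, and boundedness of $f_V$ is exactly what is needed to prevent the exponential weights from destroying this linearity.
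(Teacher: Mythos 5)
Your proof is correct and follows essentially the same route as the paper: both verify the hypothesis \eqref{eq:RicciLowerf_V} of Theorem~\ref{thm:GlobalLapComp} for a negative $\kappa$ using the boundedness of $f_V$, apply the comparison \eqref{eq:GloLapComp}, linearize via $\coth s\le 1+s^{-1}$, and absorb the exponential weights and the lower bound $s_p(x)\ge C_p e^{-\frac{2\|f_V\|_\infty}{n-m}}r_p(x)$ into the constant. The only (harmless) difference is that you normalize $\kappa=-1$ and choose $C_p$ explicitly, whereas the paper keeps $\kappa$ and $C_p$ general and solves the relation $K=-(n-m)\kappa e^{-\frac{4\inf_M f_V}{n-m}}C_p^2$; your version in fact spells out the pointwise inequality $(n-m)\kappa e^{-\frac{4f_V(x)}{n-m}}C_p^2\le -K$ more carefully.
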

\begin{proof}[\bf Proof]
Since $f_V$ is bounded, we have \eqref{eq:RicciLowerf_V} with $K=-(n-m)\kappa e^{-\frac{4\inf_Mf_V}{n-m}}C_p^2$. Then, for $x\notin {\rm Cut}(p)$, \eqref{eq:GloLapComp} implies
\begin{align*}
r_p(x)\Delta_Vr_p(x)&\leq (n-m)\sqrt{-\kappa}\,r_p(x)\coth(\sqrt{-\kappa}s_p(x))e^{-\frac{2f_V(x)}{n-m}}C_p\\
&\leq (n-m)r_p(x)\left(\sqrt{-\kappa}+\frac{1}{s_p(x)} \right)e^{-\frac{2f_V(x)}{n-m}}C_p\quad  (\because\; \coth x\leq 1+x^{-1}, \;x>0)\\
&=(n-m)\left(\sqrt{-\kappa}\,r_p(x)+\frac{r_p(x)}{s_p(x)} \right)e^{-\frac{2f_V(x)}{n-m}}C_p\\
&\leq (n-m)\left(\sqrt{-\kappa}\,e^{-\frac{2\inf_M f_V}{n-m}}C_pr_p(x)+\frac{r_p(x)}{s_p(x)}e^{-\frac{2f_V(x)}{n-m}}C_p \right)\\
&\leq (n-m)\left(\sqrt{-\kappa}\,e^{-\frac{2\inf_M f_V}{n-m}}C_pr_p(x)+
e^{\frac{2(\sup_Mf_V-\inf_M f_V)}{n-m}}
 \right).
\end{align*}
This yields the condition {\bf (B2)}. 
\end{proof}

\begin{cor}\label{cor:(A)}
Assume that $V=\nabla f$ for some $f\in C^2(M)$ and $f$ is bounded. 
Suppose ${\rm Ric}_f^m\geq- Kg$ with $K>0$ for $m\in]-\infty,\,1\,]$. 
Then the condition {\bf (B2)} holds. 
\end{cor}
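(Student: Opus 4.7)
The plan is to deduce the corollary directly from Theorem~\ref{thm:(A)-K} by verifying that, in the gradient case $V=\nabla f$, the hypothesis ``$f_V$ is bounded'' reduces to the hypothesis ``$f$ is bounded''. Once this reduction is in place, there is nothing left to prove.

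First I would compute $f_V$ explicitly when $V=\nabla f$. For any unit-speed geodesic $\gamma$ with $\gamma_0=p$, the fundamental theorem of calculus gives
\begin{align*}
V_\gamma(r) = \int_0^r \langle \nabla f, \dot\gamma_s\rangle_{\gamma_s}\,\d s
= \int_0^r \tfrac{\d}{\d s} f(\gamma_s)\,\d s
= f(\gamma_r)-f(p),
\end{align*}
so the integral appearing in the definition \eqref{eq:ModifiedPhi} of $f_V$ depends only on the endpoint and not on the choice of minimizing geodesic. Consequently $f_V(x) = f(x)-f(p)$ for every $x\in M$, and in particular $\|f_V\|_\infty \leq 2\|f\|_\infty < \infty$ whenever $f$ is bounded.

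With this identification, the boundedness of $f$ guarantees the boundedness of $f_V$, which is exactly the standing hypothesis of Theorem~\ref{thm:(A)-K}. Since moreover $V=\nabla f$ places us in the setting $m\in\,]-\infty,1\,]$ with ${\Ric}_V^m = {\Ric}_f^m\geq -Kg$, Theorem~\ref{thm:(A)-K} applies and yields condition \textbf{(B2)}.

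I do not expect any real obstacle here: the only subtlety is the path-independence of $V_\gamma$ in the gradient case, which removes the infimum in \eqref{eq:ModifiedPhi} and equates the two notions of boundedness. Everything else is an invocation of the preceding theorem.
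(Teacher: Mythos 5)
Your proof is correct and follows the same route the paper intends: the identity $V_\gamma(r)=f(\gamma_r)-f(p)$ gives $f_V=f-f(p)$, so boundedness of $f$ yields boundedness of $f_V$ and Theorem~\ref{thm:(A)-K} applies directly. This matches the paper, which treats the corollary as an immediate specialization of that theorem.
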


The following is shown in \cite[(2.6)]{Qiu}.

\begin{thm}[{{\cite[(2.6)]{Qiu}}}]\label{thm:LaplacianCompaInfinity}
Suppose ${\rm Ric}_V^{\infty}\geq -Kg$ for $K\geq0$. Then for any sufficiently small 
$a\in]\,0,\,{\color{black}{{\sf d}}}(p,{\rm Cut}(p))\,[$
there exists $C>0$ such that 
$\Delta_V r_p(x)\leq C+Kr_p(x)$ for all $x\in M\setminus ({\rm Cut}(p)\cup B_a(p))$. 
In particular, $\text{\bf (B3)}^{*}$, hence {\bf (B3)} holds by Lemma~\ref{lem:ConditionsEquivalence}. 
\end{thm}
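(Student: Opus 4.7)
The plan is to apply the Bochner formula for the $V$-Laplacian to the radial function $r_p$ and then to use the simplest possible lower bound on the Hessian term, namely $|\mathrm{Hess}\,r_p|^2\geq 0$. Fix $x\in M\setminus(\cut(p)\cup\{p\})$ and let $\gamma:[0,r_p(x)]\to M$ be the unique unit speed minimizing geodesic from $p$ to $x$; by minimality $\gamma_t\notin \cut(p)$ for $t\in\,]0,\,r_p(x)\,]$, so $r_p$ is smooth along $\gamma$ and $|\nabla r_p|^2\equiv 1$ there. The standard Bochner identity for $\Delta_V$,
\begin{equation*}
\tfrac{1}{2}\Delta_V|\nabla u|^2=|\mathrm{Hess}\,u|^2+\langle\nabla\Delta_V u,\nabla u\rangle+{\Ric}_V^\infty(\nabla u,\nabla u),
\end{equation*}
applied with $u=r_p$ therefore yields, along $\gamma$,
\begin{equation*}
\frac{\d}{\d t}(\Delta_V r_p)(\gamma_t)=\langle\nabla\Delta_V r_p,\dot\gamma_t\rangle=-|\mathrm{Hess}\,r_p|^2(\gamma_t)-{\Ric}_V^\infty(\dot\gamma_t,\dot\gamma_t).
\end{equation*}

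Next I would discard the non-negative Hessian term and invoke ${\Ric}_V^\infty\geq -Kg$ to obtain the crude inequality $\frac{\d}{\d t}(\Delta_V r_p)(\gamma_t)\leq K$ on $]0,r_p(x)\,]$. Unlike the situation for $m\in[\,n,\,+\infty\,[$, I would deliberately \emph{not} use the Cauchy-Schwarz refinement $|\mathrm{Hess}\,r_p|^2\geq (\Delta r_p)^2/(n-1)$: its trace is $\Delta r_p=\Delta_V r_p+\langle V,\nabla r_p\rangle$, and reinjecting the drift $\langle V,\nabla r_p\rangle$ would spoil the desired bound without an additional assumption on $V$. This is precisely why the statement only asserts a linear bound with slope $K$ rather than a dimensional bound.

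Finally, pick any $a\in\,]0,\,{\sf d}(p,\cut(p))\,[$. Then $\partial B_a(p)$ is compact and disjoint from $\cut(p)$, hence $C_0:=\sup_{y\in\partial B_a(p)}\Delta_V r_p(y)$ is finite by continuity of $\Delta_V r_p$ off the cut locus. Integrating the differential inequality along $\gamma$ from $t=a$ to $t=r_p(x)$ gives, for every $x\in M\setminus(\cut(p)\cup B_a(p))$,
\begin{equation*}
\Delta_V r_p(x)\leq C_0+K\bigl(r_p(x)-a\bigr)\leq C+Kr_p(x),
\end{equation*}
with $C:=C_0$, which is the main estimate. Setting $D:=\max\{C,K\}$ yields $\Delta_V r_p(x)\leq D(1+r_p(x))$ on the same set, i.e.\ the condition $\text{\bf (B3)}^{*}$, and Lemma~\ref{lem:ConditionsEquivalence} upgrades it to $\text{\bf (B3)}$.

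The argument is essentially routine, so no genuine obstacle arises: the Bochner identity is classical, the curvature hypothesis enters only through a pointwise sign, and the initial constant $C_0$ is finite because $a$ lies below the injectivity radius at $p$. The only point requiring care is to ensure the second-order differentiability of $r_p$ along the chosen geodesic, which is exactly why one restricts the comparison to $M\setminus\cut(p)$.
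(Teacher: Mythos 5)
Your argument is correct: applying the Bochner identity for $\Delta_V$ to $r_p$ (where $|\nabla r_p|\equiv 1$ off the cut locus), discarding the non-negative term $|\mathrm{Hess}\,r_p|^2$, and integrating the resulting Riccati-type inequality $\frac{\d}{\d t}(\Delta_V r_p)(\gamma_t)\le K$ from $t=a$ with the initial constant $C_0=\sup_{\partial B_a(p)}\Delta_V r_p$ is precisely the standard derivation of the estimate that the paper imports from Qiu's (2.6) without reproducing the proof. The only cosmetic point is that condition $\text{\bf (B3)}^{*}$ requires $D\ge 1$, so one should take $D:=\max\{C_0,K,1\}$.
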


\begin{remark}\label{rem:LaplacianCompaInfinity}
{\rm As a special case of Theorem~\ref{thm:LaplacianCompaInfinity}, ${\rm Ric}_V^{\infty}\geq0$ implies ${\bf (B3)}$. In particular, our Theorem~\ref{thm:Liouville2} 
extends Chen-Jost-Qiu~\cite[Theorem~2]{ChenJostQiu} and Qiu~\cite[Theorem~2]{Qiu}.  
}
\end{remark}
\begin{cor}\label{cor:Vbounded}
Suppose that ${\rm Ric}_V^m\geq -Kg$ for $K\geq0$ with $m\in]-\infty,\,1\,]$ and $V$ is a bounded $C^1$-vector field. Then the condition {\bf (B3)} holds.
\end{cor}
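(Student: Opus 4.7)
The plan is to reduce the hypothesis to a lower bound for ${\rm Ric}_V^{\infty}$ and then invoke Theorem~\ref{thm:LaplacianCompaInfinity}. The natural alternative route via Theorem~\ref{thm:(A)-K} is blocked: the boundedness of $V$ only yields $|f_V(x)|\leq (\sup_M|V|)\cdot r_p(x)$, so $f_V$ may grow linearly and cannot be assumed bounded. This is really the only obstacle, and it is overcome by a one-line Cauchy--Schwarz estimate, exploiting the fact that $m\leq 1<n$.

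First I would use the identity
$$
{\rm Ric}_V^{\infty}={\rm Ric}_V^m-\frac{V^*\otimes V^*}{n-m},
$$
which is valid since $m\in\,]-\infty,\,1\,]$ ensures $n-m>0$. Evaluating at $X\in T_xM$ and applying $\langle V,X\rangle_x^2\leq|V|_x^2|X|^2$ gives
$$
{\rm Ric}_V^{\infty}(X,X)={\rm Ric}_V^m(X,X)-\frac{\langle V,X\rangle_x^2}{n-m}\geq -K|X|^2-\frac{|V|_x^2}{n-m}|X|^2.
$$
Setting $M_0:=\sup_{x\in M}|V|_x<\infty$, this yields the uniform bound
$$
{\rm Ric}_V^{\infty}\geq -\left(K+\frac{M_0^2}{n-m}\right)g \quad\text{on all of }M.
$$

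Then I would apply Theorem~\ref{thm:LaplacianCompaInfinity} with $K':=K+M_0^2/(n-m)$ in place of $K$. Its conclusion is precisely $\text{\bf (B3)}^*$, which by Lemma~\ref{lem:ConditionsEquivalence} is equivalent to \textbf{(B3)}. The restriction $m\leq 1$ (so $m<n$, since $n\geq 2$) is what makes the extra term $V^*\otimes V^*/(n-m)$ enter with the correct sign, so that the $(m,V)$-Ricci lower bound can be transferred to an $(\infty,V)$-Ricci lower bound at the uniform cost $M_0^2/(n-m)$; the argument would break immediately in the complementary regime $m\geq n$, where the sign is opposite.
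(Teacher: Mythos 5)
Your proof is correct and is essentially the paper's own argument: the paper likewise converts ${\rm Ric}_V^m\geq -Kg$ into ${\rm Ric}_V^{\infty}\geq -\bigl(K+\frac{|V|_{\infty}^2}{n-m}\bigr)g$ via Cauchy--Schwarz on the term $\frac{\langle V,v\rangle^2}{m-n}$ and then applies Theorem~\ref{thm:LaplacianCompaInfinity}. Your additional remarks (why the route through Theorem~\ref{thm:(A)-K} is unavailable, and the sign issue for $m\geq n$) are accurate but not needed.
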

\begin{proof}[\bf Proof]
${\rm Ric}_V^m\geq -Kg$ implies ${\rm Ric}_V^{\infty}(v,v)\geq -K|v|^2+\frac{\langle V,v\rangle^2}{m-n}\geq-
\left(K+\frac{|V|_{\infty}^2}{n-m}\right)|v|^2$, i.e., ${\rm Ric}_V^{\infty}\geq -\left(K+\frac{|V|_{\infty}^2}{n-m}\right)g$. Then the conclusion follows from 
Theorem~\ref{thm:LaplacianCompaInfinity}.
\end{proof}

\begin{remark}\label{rem:classicalLaplacian}
{\rm If ${\Ric}_V^m\geq (m-1)\kappa g$ with $\kappa<0$ for $m\in[\,n,\,+\infty\,[$, then we know the following Laplacian comparison 
\begin{align*}
r_p(x)\Delta_Vr_p(x)&\leq (m-1)\sqrt{-\kappa}\,r_p(x)\coth(\sqrt{-\kappa}\,r_p(x))\\
&\leq (m-1)(1+\sqrt{-\kappa}\,r_p(x))
\quad\text{ for }\quad x\notin {\rm Cut}(p)
\end{align*}
by \cite[Theorem~4.2]{BQ}, \cite[Theorem~1.1]{Xdli:Liouville}, \cite[Theorem~2.3]{KSa} and  \cite[Theorem~3.9]{LMO:CompaFinsler}. In particular, the condition {\bf (B2)} holds under ${\Ric}_V^m\geq Kg$ for $m\in[\,n,\,+\infty\,[$ and $K\in\R$. 
} 
\end{remark}

\section{Construction of $\Delta_V$-diffusion processes and
Kendall's expression of radial processes}\label{sec:diffusion}

In this section, we summarize the construction of
$\Delta_V$-diffusion processes on $M$ and note that the Kendall's expression for radial processes
remains valid for $\Delta_V$-diffusion process.

Let $V$ be a $C^1$-vector field and $\Delta_V:=\Delta-\langle V,\nabla\cdot\rangle$ denote the $V$-Laplacian. The construction of
$\Delta_V$-diffusion process can be done by way of the \emph{horizontal diffusion process} $(U_t)_{t\geq 0}$ generated by $\Delta_{O(M)}-{\bf H}_V$ on $O(M)$ by solving the Stratonovich stochastic differential equation (SDE)
\begin{align*}
\d U_t=\sqrt{2}\sum_{i=1}^n H_{e_i}(U_t)\circ \d B_t^i-{\bf H}_V(U_t)\d t,\quad U_0=U\in O(M),
\end{align*}
where $B_t:=(B_t^1,B_t^2,\cdots, B_t^n)$ is the $n$-dimensional Brownian motion on a complete filtered probability space
$(\Omega,(\mathscr{F}_t)_{t\geq0},\mathscr{F}, \mathbb{P})$.
Here $(e_i)_{i=1}^n$ is the canonical orthonormal basis in $\R^n$, $(H_{e_i})_{i=1}^n$ the corresponding family of horizontal vector fields in the frame bundle $\mathscr{F}(M)$, and
${\bf H}_V(U):=H_{U^{-1}V}(U)$, $U\in  O(M)$ denotes the horizontal lift of $V$, where $U^{-1}V$ is the unique vector $e\in\R^n$ such that $V_{\pi(U)}=Ue$. Moreover,
\begin{align*}
\Delta_{O(M)}:= \sum_{i=1}^n H_{e_i}^2
\end{align*}
is the  \emph{horizontal Laplace operator}.
Since
${\bf H}_V$ is $C^1$, it is  well known that the equation has a unique solution up to the life time $\zeta:=\lim_{k\to\infty}\zeta_k$, where
\begin{align*}
\zeta_k:=\inf\{t>0\mid {\color{black}{{\sf d}}}(\pi(U),\pi(U_t))\geq k\},\quad  k\in\mathbb{N}
\end{align*}
and $\pi:O(M)\to M$ is the projection from the orthonormal frame to its point on $M$.
Let $X_t:=\pi(U_t)$. Then $X_t$ solves the equation
\begin{align*}
\d X_t=\sqrt{2} U_t\circ \d B_t-V(X_t)\d t, \quad
X_0=x:=\pi(u_0)
\end{align*}
up to the life time $\zeta$. By It\^o's formula, for any $f\in C_0^2(M)$,
\begin{align}
m_t^f:=f(X_t)-f(X_0)-\int_0^t \Delta_Vf(X_s)\d s=\sqrt{2}\int_0^t\langle U_s^{-1}\nabla f(X_s),\d B_s\rangle\label{eq:Ito}
\end{align}
is a martingale; i.e., $X_t$ is the \emph{diffusion process} generated by $\Delta_V$, and we call it the \emph{$\Delta_V$-diffusion process}. 
The expression \eqref{eq:Ito} can be extended for $f\in C^{\infty}(M)$. 
In this case, $m_t^f$ is no longer a martingale, but for any relatively compact open set $G$, $t\mapsto m^f_{t\land \tau_G}$ 
is a martingale. Indeed, there exists $\rho_G\in C_0^{\infty}(M)$ such that $\rho_G\equiv 1$ on $G$, and $f_G\in C_0^{\infty}(M)$ defined by $f_G:=f\rho_G$ satisfies that $m^f_{t\land\tau_G}=m^{f_G}_{t\land\tau_G}$ is a martingale. 

From now on, the probability measure $\mathbb{P}$
is taken for the underlying process starting from point $x$, i.e. $\mathbb{P}:=\mathbb{P}_x$.
The details of the construction for $\Delta_V$-diffusion process can be found in \cite[\S 2.1]{FYWang}. We denote the $\Delta_V$-diffusion process
by ${\bf X}^V:=(\Omega,X_t,(\mathscr{F}_t)_{t\geq0},\mathscr{F},{\P}_x)$.

The following is shown in \cite[Proposition~1.1.5]{Hsu:2001}.

\begin{thm}[{\cite[Proposition~1.1.5]{Hsu:2001}}]\label{thm:tight}
The diffusion process ${\bf X}^V$
on $M$ is tight in the sense that for any increasing sequence $\{G_n\}$ of relatively compact open sets  satisfying $\overline{G_n}\subset G_{n+1}\subset M$ for any $n\in\N$ and $M=\bigcup_{n=1}^{\infty}G_n$, 
\begin{align}
{\P}_x\left(\lim_{n\to\infty}\tau_{G_n}=\zeta\right)=1, \label{eq:nest}
\end{align}
{\color{black}{equivalently
\begin{align}
{\P}_x\left(\lim_{n\to\infty}\sigma_{M\setminus G_n}\geq \zeta\right)=1 \label{eq:nest1}
\end{align}
for all $x\in M$. Note here that $\tau_{G_n}=\sigma_{M\setminus G_n}\land\zeta$ and $\sigma_{M\setminus G_n}\geq\zeta$ is equivalent to $\sigma_{M\setminus G_n}=+\infty$. 
}} 
\end{thm}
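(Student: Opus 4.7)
The plan is to reduce the statement to proving \eqref{eq:nest}, since \eqref{eq:nest} and \eqref{eq:nest1} are formally equivalent through the identity $\tau_{G_n}=\sigma_{M\setminus G_n}\wedge\zeta$ noted in the statement: if $\lim_n\tau_{G_n}=\zeta$ then $\lim_n\sigma_{M\setminus G_n}\geq\lim_n\tau_{G_n}=\zeta$, and conversely $\lim_n\sigma_{M\setminus G_n}\geq\zeta$ forces $\lim_n\tau_{G_n}=\lim_n(\sigma_{M\setminus G_n}\wedge\zeta)=\zeta$. Since $G_n\subset G_{n+1}$ and each $\tau_{G_n}\leq\zeta$ by construction, the nondecreasing limit $\tau_{\infty}:=\lim_{n\to\infty}\tau_{G_n}\leq\zeta$ exists $\P_x$-almost surely, so the content of the theorem is to rule out the event $\{\tau_{\infty}<\zeta\}$.

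I would argue by contradiction. Assume $\P_x(\tau_{\infty}<\zeta)>0$. Sample paths of ${\bf X}^V$ are continuous on the stochastic interval $[0,\zeta)$, as a consequence of the SDE construction via the horizontal process on $O(M)$. Hence on $\{\tau_{\infty}<\zeta\}$ the state $X_{\tau_{\infty}}\in M$ is well defined, and $X_{\tau_{G_n}}\to X_{\tau_{\infty}}$ in $M$. By path continuity together with the definition of first exit time, $X_{\tau_{G_n}}\in\partial G_n$ on $\{\tau_{G_n}<\zeta\}$. Invoking the nested condition $\overline{G_n}\subset G_{n+1}$, for every $m\geq n+1$ one has $\partial G_m\subset M\setminus\overline{G_n}\subset M\setminus G_n$, so $X_{\tau_{G_m}}\in M\setminus G_n$. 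Since $M\setminus G_n$ is closed in $M$, letting $m\to\infty$ yields $X_{\tau_{\infty}}\in M\setminus G_n$ for every $n\in\N$. But $\bigcap_{n\in\N}(M\setminus G_n)=M\setminus\bigcup_n G_n=\emptyset$, contradicting $X_{\tau_{\infty}}\in M$. Therefore $\tau_{\infty}=\zeta$ $\P_x$-a.s., proving \eqref{eq:nest}.

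The main subtlety is that path continuity at $\tau_{\infty}$ is available only on the event $\{\tau_{\infty}<\zeta\}$; this is precisely why the argument is cast as a contradiction on that event, while on its complement $\{\tau_{\infty}=\zeta\}$ the desired identity is already in hand. The compactness of each $\overline{G_n}$ enters only to ensure that $\tau_{G_n}$ is a legitimate stopping time and that its value on $\{\tau_{G_n}<\zeta\}$ genuinely lies on $\partial G_n$; compactness plays no direct role in the final topological step, which is purely a consequence of the exhaustion hypothesis $M=\bigcup_n G_n$. No ingredient specific to the drift $V$ is needed beyond the fact that the horizontal SDE produces a continuous $M$-valued process with life time $\zeta$, so the argument is essentially that of the driftless case in \cite[Proposition~1.1.5]{Hsu:2001}.
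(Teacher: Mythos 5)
Your argument is correct and is essentially the standard proof of the cited result: the paper itself gives no proof of Theorem~\ref{thm:tight}, deferring entirely to \cite[Proposition~1.1.5]{Hsu:2001}, and your contradiction argument (continuity of paths on $[\,0,\,\zeta\,[$, $X_{\tau_{G_m}}\in\partial G_m\subset M\setminus G_n$ for $m\geq n+1$, and $\bigcap_n(M\setminus G_n)=\emptyset$) is exactly the argument used there. The only cosmetic point is that for the finitely many $n$ with $x\notin G_n$ one has $\tau_{G_n}=0$ and $X_{\tau_{G_n}}=x$, which need not lie on $\partial G_n$; since your limiting step only uses large $m$, this changes nothing.
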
 
{\color{black}{
\begin{lem}\label{lem:noinsidekilling}
Let $\{G_n\}$ be an increasing sequence $\{G_n\}$ of relatively compact open sets  satisfying $\overline{G_n}\subset G_{n+1}\subset M$ for any $n\in\N$ and $M=\bigcup_{n=1}^{\infty}G_n$. 
The following assertions are equivalent and hold. 
\begin{enumerate} 
\item\label{item:1} The diffusion process ${\bf X}^V$
on $M$ has no killing inside, i.e., 
\begin{align}
\P_x(X_{\zeta-}\in M,\zeta<\infty)=0\label{eq:insidekilling}
\end{align}
for all $x\in M$. 
\item\label{item:2} We have 
\begin{align}
\P_x(\sigma_{M\setminus G_n}<\zeta \quad\text{ for all }\quad n\in\mathbb{N})=1\label{eq:insidebehavior}
\end{align} 
for all $x\in M$. 
\item\label{item:3} We have  
\begin{align}
{\P}_x\left(\lim_{n\to\infty}\sigma_{M\setminus G_n}= \zeta\right)=1 \label{eq:nest2}
\end{align}
for all $x\in M$. 
\end{enumerate}
\end{lem}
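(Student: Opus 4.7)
The plan is to establish the cycle (i) $\Rightarrow$ (ii) $\Rightarrow$ (iii) $\Rightarrow$ (i), which yields the equivalence, and to verify (i) directly from the SDE construction of $\mathbf{X}^V$; the other two assertions then follow automatically. The implication (ii) $\Rightarrow$ (iii) is essentially immediate from Theorem~\ref{thm:tight}: under (ii) one has $\tau_{G_n}=\sigma_{M\setminus G_n}\wedge\zeta=\sigma_{M\setminus G_n}$ almost surely, and the tightness relation $\tau_{G_n}\to\zeta$ becomes $\sigma_{M\setminus G_n}\to\zeta$.

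For (iii) $\Rightarrow$ (i) I would argue by contradiction. Suppose $\P_x(\zeta<\infty,\ X_{\zeta-}\in M)>0$; on this event set $p:=X_{\zeta-}$ and pick $n_0$ with $p\in G_{n_0}$. Completeness of $(M,g)$ and Hopf--Rinow ensure that every closed ball $\overline{B(p,R)}$ is compact and therefore eventually contained in some $G_n$, so ${\sf d}(p,M\setminus G_n)\to+\infty$. The SDE, having $C^1$ (hence locally Lipschitz) coefficients, cannot keep its solution inside a compact set up to a finite lifetime --- a standard Picard--Lindel\"of-type continuation would extend the solution strictly past $\zeta$ --- so necessarily $\sigma_{M\setminus G_n}<\zeta$ for every $n$ on this event. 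Condition (iii) together with left-continuity of the path then yields $X_{\sigma_{M\setminus G_n}}\to X_{\zeta-}=p$, whereas $X_{\sigma_{M\setminus G_n}}\in M\setminus G_n$ forces ${\sf d}(p,X_{\sigma_{M\setminus G_n}})\to+\infty$, a contradiction.

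For (i) $\Rightarrow$ (ii) I would split according to the value of $\zeta$. On $\{\zeta<\infty\}$, condition (i) says $X_{\zeta-}\notin M$, so path continuity makes $X_t$ escape every compact set $\overline{G_n}$ strictly before $\zeta$, giving $\sigma_{M\setminus G_n}<\zeta$. On $\{\zeta=\infty\}$, the non-degenerate (uniformly elliptic on compacts) character of $\Delta_V$ forces the diffusion to exit every relatively compact open set in finite time almost surely, a classical fact for elliptic diffusions on complete Riemannian manifolds; hence $\sigma_{M\setminus G_n}<\infty=\zeta$. Finally, assertion (i) itself follows from the very definition of $\zeta=\lim_k\zeta_k$ as the explosion time: on $\{\zeta<\infty\}$ the path leaves each ball $B(\pi(U),k)$ before $\zeta$, so $X_{\zeta-}$ cannot lie in $M$.

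The main obstacle is the step (i) $\Rightarrow$ (ii) restricted to $\{\zeta=\infty\}$: this is not a formal consequence of (i) alone and must invoke the non-degenerate (irreducibility-type) property of $\Delta_V$ in order to exclude the pathological scenario where the diffusion becomes trapped in some relatively compact $G_n$ forever. All remaining implications reduce to a careful use of path continuity together with the defining property of the explosion time of an SDE with $C^1$ coefficients.
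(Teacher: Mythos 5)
Your proposal is correct in substance and reaches all three assertions, but it takes a genuinely different route to the key item (i) than the paper does. The implications (ii)$\Rightarrow$(iii) and (iii)$\Rightarrow$(i) are essentially the paper's: the paper likewise notes that $\sigma_{M\setminus G_n}\geq\zeta$ forces $\sigma_{M\setminus G_n}=+\infty$, so under $\zeta<+\infty$ one has $\sigma_{M\setminus G_n}<\zeta$ for all $n$, and then derives $r_p(X_{\zeta-})=\lim_n r_p(X_{\sigma_{M\setminus G_n}})\geq\lim_n{\sf d}(p,G_n^c)=+\infty$, exactly your contradiction. The real divergence is in proving (i) itself. You read it off the SDE construction: $\zeta=\lim_k\zeta_k$ is the explosion time, and a localization/continuation argument for the locally Lipschitz coefficients shows that on $\{\zeta<\infty\}$ the path must leave every metric ball, so $X_{\zeta-}$ cannot exist in $M$; this gives the conclusion for every fixed starting point at once. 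The paper instead realizes the part process on each $G_n$ as the process of a regular semi-Dirichlet form, invokes Oshima's theorem that such part processes have no inside killing, obtains \eqref{eq:insidekilling} for $\nu_g$-a.e.\ $x$, and then upgrades to all $x$ by showing that $x\mapsto\P_x(X_{\zeta-}\in M,\zeta<\infty)$ is excessive and combining Meyer's hypothesis (L) with the strong Feller property. Your route is shorter and entirely avoids the a.e.-to-everywhere regularization, at the price of leaning on the (standard but not trivial) fact that explosion of a locally Lipschitz SDE entails convergence to the point at infinity of the one-point compactification; you should cite or prove that, since the tightness statement of Theorem~\ref{thm:tight} alone does not exclude a path trapped in some $G_n$ up to a finite lifetime. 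Finally, your observation that (i)$\Rightarrow$(ii) on $\{\zeta=\infty\}$ is not a formal consequence of (i) and requires the non-degeneracy of $\Delta_V$ is well taken: the needed input is $\P_x(\sigma_{M\setminus G_n}<\infty)=1$, which follows from $\E_x[\tau_{G_n}]<\infty$ via Dynkin's formula applied to a barrier $\phi\in C_0^\infty(M)$ with $\Delta_V\phi\geq1$ on $\overline{G_n}$; the paper disposes of this case only by the terse phrase ``applying \eqref{eq:insidekilling} again,'' so your explicit treatment is a point where the proposal is more transparent than the original.
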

\begin{proof}[\bf Proof]
We first prove \ref{item:1}. 
Let $\{G_n\}$ be the exhaustion as stated above. 
Let us consider a pre-semi-Dirichlet form $(\mathscr{E}_{G_n},C_0^1(G_n))$ on $L^2(G_n;\nu_g)$ defined by 
\begin{align}
\mathscr{E}_{G_n}(f,g)=\int_{G_n}\langle \nabla f,\nabla g\rangle\d\nu_g+\int_{G_n}\langle V,\nabla f\rangle \,g\,\d\nu_g\quad \text{ for }\quad f,g\in C_0^1(G_n).\label{eq:presemiDir}
\end{align}
Thanks to the boundedness $V$ on $G_n$, there exists $\alpha_0=\alpha_0(G_n)$ such that  
\begin{align}
(\mathscr{E}_{G_n})_{\alpha_0}(u,u)\geq0\quad \text{ for }\quad u\in C_0^1(G_n),\label{eq:lowerBdd}
\end{align}
where $(\mathscr{E}_{G_n})_{\alpha_0}(u,u):=\mathscr{E}_{G_n}(u,u)+\alpha_0\int_{G_n}|u|^2\d\nu_g$,  
and there exists $K=K(G_n)\geq1$ such that
\begin{align}
|\mathscr{E}_{G_n}(u,v)|\leq K(\mathscr{E}_{G_n})_{\alpha_0}(u,u)^{\frac12}
(\mathscr{E}_{G_n})_{\alpha_0}(v,v)^{\frac12}\quad\text{ for all }\quad u,v\in C_0^1(G_n).
\label{eq:Presector}
\end{align}
Moreover, for any $\alpha>\alpha_0$, there exists $0<\lambda_1\leq\lambda_2$ depending on $\alpha$ such that 
\begin{align}
\lambda_1{\bf D}_1(u,u)\leq (\mathscr{E}_{G_n})_{\alpha}(u,u)\leq\lambda_2{\bf D}_1(u,u)\quad\text{ for all }\quad u\in  C_0^1(G_n),\label{eq:EquivalenceEnergy}
\end{align}
where ${\bf D}_1(u,u):=\int_{G_n}|\nabla u|^2\d\nu_g+\int_{G_n}|u|^2\d\nu_g$. 
From \eqref{eq:EquivalenceEnergy}, $(\mathscr{E}_{G_n},C_0^1(G_n))$ is closable on $L^2(G_n;\nu_g)$ and denote its closure by $(\mathscr{E}_{G_n},D(\mathscr{E}_{G_n}))$ and it is a regular semi-Dirichleet form on 
$L^2(G_n;\nu_g)$ (cf. \cite[Theorems~1.5.2 and 1.5.3]{Oshima} in the Euclidean setting).  
The part process ${\bf X}_{G_n}^V$ defined by 
$X_t^{G_n}:=X_t$ if $t<\tau_{G_n}$, $X_t^{G_n}:=\partial$ if $t\geq\tau_{G_n}$, is a diffusion process 
associated with $(\mathscr{E}_{G_n},D(\mathscr{E}_{G_n}))$ on $L^2(G_n;\nu_g)$. 
Thanks to \cite[Theorem~3.5.16]{Oshima}, ${\bf X}_{G_n}^V$ has no killing inside in the sense that  
\begin{align}
\P_x(X_{\tau_{G_n}-}\in G_n,\tau_{G_n}<+\infty)=0\quad \nu_g\text{-a.e.~}x\in G_n.\label{eq:noinsidekillingG_n}
\end{align}
From \eqref{eq:noinsidekillingG_n}, we can deduce
\begin{align}
\P_x(X_{\zeta-}\in M,\zeta<+\infty)=0\quad \nu_g\text{-a.e.~}x\in M.\label{eq:noinsidekillingM}
\end{align}
Indeed, under $\zeta<+\infty$, we have $\lim_{n\to\infty}X_{\tau_{G_n}-}=X_{\zeta-}$ immediately from \eqref{eq:nest}. 
Take $n>\ell>k$. Then 
\begin{align*}
\P_x(X_{\tau_{G_n}-}\in G_{\ell},\zeta<+\infty)&\leq \P_x(X_{\tau_{G_n}-}\in G_{\ell},\tau_{G_n}<+\infty)\\
&\leq \P_x(X_{\tau_{G_n}-}\in G_n,\tau_{G_n}<+\infty)=0\quad \text{ for }\quad \nu_g\text{-a.e.~}x\in G_k.
\end{align*}
Thus 
\begin{align*}
\E_x[\1_{G_{\ell}}(X_{\tau_{G_n}-}):\zeta<+\infty]=0\quad\nu_g\text{-a.e.~}x\in G_k.
\end{align*}
From this, 
\begin{align*}
\E_x[\1_{G_{\ell}}(X_{\zeta-}):\zeta<+\infty]&\leq \E_x[\varliminf_{n\to\infty}\1_{G_{\ell}}(X_{\tau_{G_n}-}):\zeta<+\infty]\\
&\leq 
\varliminf_{n\to\infty}\E_x[\1_{G_{\ell}}(X_{\tau_{G_n}-}):\zeta<+\infty]=0\quad\nu_g\text{-a.e.~}x\in G_k.
\end{align*}
Letting $\ell\to\infty$ and $k\to\infty$, we obtain \eqref{eq:noinsidekillingM}. 
Next we prove that $\P_x(X_{\zeta-}\in M,\zeta<+\infty)$ is an excessive function with respect to ${\bf X}^V$. 
Indeed, by $\zeta\circ\theta_t=(\zeta-t)_+$, 
\begin{align*}
p_t(\P_{\text{\tiny$\bullet$}}(X_{\zeta-}&\in M,\zeta<+\infty))(x)\\&=\E_x[\P_{X_t}(X_{\zeta-}\in M,\zeta<+\infty)]\\
&=\P_x\left(X_{(t+\zeta\circ \theta_t)-}\in M, t+\zeta\circ \theta_t<+\infty\right)\\
&=\P_x(X_{\zeta-}\in M,t+\zeta\circ \theta_t<+\infty, t<\zeta)+
\P_x(X_{t-}\in M,t+\zeta\circ \theta_t<+\infty, t\geq\zeta)\\
&=\P_x(X_{\zeta-}\in M, t\leq\zeta<+\infty)\\
&\nearrow \P_x(X_{\zeta-}\in M, 0<\zeta<+\infty)=\P_x(X_{\zeta-}\in M, \zeta<+\infty)
\quad\text{ as }\quad t\searrow 0.
\end{align*}
On the other hand, ${\bf X}^V$ satisfies the Meyer's hypotheses (L), i.e., 
$R_{\alpha}(x,\d y)\ll \nu_g(\d y)$ for each $\alpha>0$ and $x\in M$, where 
$R_{\alpha}f(x)=\E_x\left[\int_0^{\infty}e^{-\alpha t}f(X_t)\d t\right]$ for $f\in \mathscr{B}(M)_+$. 
Indeed, since ${\bf X}_{G_n}^V$ is associated to $(\mathscr{E}_{G_n},D(\mathscr{E}_{G_n}))$, 
for  any $f\in \mathscr{B}(M)_+$ with $f=0$ $\nu_g$-a.e.
\begin{align*}
\lambda_1\| R_{\alpha}^{G_n}f\|_{L^2(G_n;\nu_g)}^2&\leq 
\lambda_1{\bf D}_1(R_{\alpha}^{G_n}f,R_{\alpha}^{G_n}f)\\
&\leq (\mathscr{E}_{G_n})_{\alpha}(R_{\alpha}^{G_n}f,R_{\alpha}^{G_n}f)\\
&=(f,R_{\alpha}^{G_n}f)_{L^2(G_n;\nu_g)}=0
\end{align*}
implies $R_{\alpha}^{G_n}f=0$ $\nu_g$-a.e. Here 
$R_{\alpha}^{G_n}f(x):=\E_x\left[\int_0^{\tau_{G_n}}e^{-\alpha t}f(X_t)\d t\right]$.  
Letting $n\to\infty$, 
we get $R_{\alpha}f=0$ $\nu_g$-a.e.~on $M$. Hence $R_{\alpha}f=0$ on $M$ by the strong Feller property of 
${\bf X}^V$ (see \cite{Molchanov},\cite[Proposition~1.11]{Azencott:Behavi}). 
Thus we obtain Meyer's hypotheses (L) for ${\bf X}^V$. 
Then, for any excessive function $h$ on $M$, 
$h=0$ $\nu_g$-a.e. implies $h=0$ on $M$ by \cite[(10.25)]{Shar:gene}. Therefore, applying this to the excessive function $h:=\P_{\text{\tiny$\bullet$}}(X_{\zeta-}\in M;\zeta<\infty)$, we obtain \ref{item:1} from \eqref{eq:noinsidekillingM}. 

Next we prove \ref{item:1}$\Longrightarrow$\ref{item:2}.  By \eqref{eq:nest1}, 
$\lim_{n\to\infty}\sigma_{M\setminus G_n}>\zeta$ or 
$\lim_{n\to\infty}\sigma_{M\setminus G_n}=\zeta$ $\P_x$-a.s.~on  $\{\zeta<+\infty\}$. 
This implies that  
$\sigma_{M\setminus G_n}=+\infty$ for some $n\in\mathbb{N}$ or 
$\sigma_{M\setminus G_n}<\zeta$ for all $n\in\mathbb{N}$ $\P_x$-a.s.~on  $\{\zeta<+\infty\}$, because 
$\sigma_{M\setminus G_n}\geq\zeta$ is equivalent to $\sigma_{M\setminus G_n}=+\infty$. 
Thanks to \eqref{eq:insidekilling} for all $x\in M$,
under $\zeta<+\infty$, the former case, i.e., $\sigma_{M\setminus G_n}=+\infty$ for some $n\in\mathbb{N}$  does not occur. Under $\zeta=+\infty$, by \eqref{eq:nest1}, $\lim_{n\to\infty}\sigma_{M\setminus G_n}=+\infty$. Applying \eqref{eq:insidekilling} for all $x\in M$ again, $\sigma_{M\setminus G_n}=+\infty$ for some $n\in\mathbb{N}$  does not occur in this case.
Therefore, we obtain \ref{item:2}. The implication \ref{item:2}$\Longrightarrow$\ref{item:3} is clear from 
\eqref{eq:nest1}. 

Finally, we prove \ref{item:3}$\Longrightarrow$\ref{item:1}. 
Suppose $\lim_{n\to\infty}\sigma_{M\setminus G_n}=\zeta$. If $\sigma_{M\setminus G_n}=\zeta$ for some $n\in\mathbb{N}$, then $\sigma_{M\setminus G_n}=\zeta=+\infty$. So under $\zeta<+\infty$, this does not occur. Hence $\sigma_{M\setminus G_n}<\zeta$ for all $n\in\mathbb{N}$ under $\zeta<+\infty$. 
Then, we see $\lim_{n\to\infty}X_{\sigma_{M\setminus G_n}}=X_{\zeta-}$ and $X_{\sigma_{M\setminus G_n}}\in M\setminus G_n$ $\P_x$-a.s.~on $\{\zeta<+\infty\}$, hence
\begin{align*}
r_p(X_{\zeta-})=\lim_{n\to\infty}r_p(X_{\sigma_{M\setminus G_n}})\geq\lim_{n\to\infty}{\color{black}{{\sf d}}}(p,G_n^c)=+\infty
\end{align*} 
holds $\P_x$-a.s.~on  $\{\zeta<+\infty\}$.  
This implies \eqref{eq:insidekilling} for all $x\in M$.
\end{proof}
}}
A function $f$ on $M$ is said to be \emph{$V$-subharmonic} if it belongs to $C^2(M)$ and $\Delta_Vf\geq0$ on $M$. A function $f$ on $M$ is said to be \emph{$V$-superharmonic} if $-f$ is $V$-subharmonic. A function $f$ on $M$ is said to be 
\emph{$V$-harmonic} if it is $V$-subharmonic and $V$-superharmonic.
A function $f$ on $M$ is said to be \emph{finely $V$-subharmonic} if it is finely continuous (see \cite{BG:Markov} for the definition of fine topology)
and $t\mapsto f(X_t)$ is a local (${\P}_x$-)submartingale on $[\,0,\,\zeta\,[$ for all $x\in M$, i.e., there exists an increasing sequence of stopping times $\{T_k\}$ with $T_k<\zeta$ for all $k\in \N$ and $\lim_{k\to\infty}T_k=\zeta$ such that $t\mapsto f(X_{t\land T_k})$ is a submartingale.  A function $f$ on $M$ is said to be \emph{finely $V$-superharmonic} if $-f$ is finely $V$-subharmonic. A function $f$ on $M$ is said to be \emph{finely $V$-harmonic} if it is finely $V$-subharmonic and finely $V$-superhamonic. 
If $f\in C^2(M)$, then the
both sides of \eqref{eq:Ito} are local (${\P}_x$-)martingales on $[\,0,\,\zeta\,[$. More precisely,
let $\{G_k\}$ be the increasing sequence of relatively compact open sets.  
Then there exists $\rho_k\in C_0^{\infty}(M)$ such  that
$\rho_k\equiv1$ on ${G}_k$. We set $f_k:=f\rho_k\in C_0^2(M)$.
Then $M_t^{f_k}$ is a martingale and $M_{t\land \tau_{G_k}}^f=M_{t\land \tau_{G_k}}^{f_{k}}$, hence $M_t^f=M_t^{f_{k}}$ for $t<\tau_{G_k}$.  This means that  $M_t^f$ is a local (${\P}_x$-)martingale on $[\,0,\,\zeta\,[$ by Theorem~\ref{thm:tight}, because $\tau_{G_k}<\zeta$.
In particular, every $V$-subharmonic (resp.~$V$-superharmonic, $V$-harmonic) functions are finely $V$-subharmonic (resp.~$V$-superharmonic, $V$-harmonic).

The following theorem can be proved in a similar way with
the case of Brownian motion on $M$.
\begin{thm}[{cf.~\cite[Theorem~3.5.1]{Hsu:2001}, \cite[proof of Corollary~3.6]{BRW:invariant}, \cite[(3.7.10)]{FYWang}}]\label{thm:KendallDecom}
Suppose that $(X_t)_{t\geq0}$ is a $\Delta_V$-diffusion process on $M$.
Then there exists a one-dimensional Euclidean Brownian motion $\beta$ and a non-degenerated continuous process $L_t$ which increases only when $X_t\in {\rm Cut}(p)$ 
such that
\begin{align}
r_p(X_t)-r_p(X_0)=\sqrt{2}\beta_t+\int_0^t\Delta_Vr_p(X_s)\1_{\{X_s\notin {\rm Cut}(p)\}}\d s-L_t,\qquad t<\zeta\label{eq:Kendall}
\end{align}
${\P}_x$-a.s.~for all $x\in M$. Here $\zeta:=\inf\{t>0\mid X_t\notin M\}$ is the life time of $(X_t)_{t\geq0}$.
\end{thm}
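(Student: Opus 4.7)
I would adapt the classical proof of Kendall's formula for Brownian motion on a Riemannian manifold, the essential new ingredient being to track the extra drift contributed by $V$. The radial function $r_p$ is smooth on $M\setminus(\cut(p)\cup\{p\})$ with $|\nabla r_p|\equiv 1$ there. Localising by the exhaustion $\{G_n\}$ and the cut-offs $\rho_k\in C_0^\infty(M)$ introduced in the discussion after \eqref{eq:Ito}, the It\^o formula gives, while $X$ remains on the smooth region,
\begin{align*}
r_p(X_t)-r_p(X_0)=\sqrt{2}\int_0^t\langle U_s^{-1}\nabla r_p(X_s),\d B_s\rangle+\int_0^t\Delta_V r_p(X_s)\,\d s.
\end{align*}

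To extend this across the cut locus I would use the semiconcavity of $r_p$ on $M\setminus\{p\}$: near each $x_0$ the distance function is locally an infimum of smooth upper barriers $\phi_\alpha$, with $\phi_\alpha(x_0)=r_p(x_0)$. Applying It\^o's formula to each barrier, patching via a partition of unity, and following the Tanaka-type argument of Kendall (as in \cite[Theorem~3.5.1]{Hsu:2001}), one obtains a continuous semimartingale decomposition
\begin{align*}
r_p(X_t)-r_p(X_0)=M_t+\int_0^t\Delta_V r_p(X_s)\1_{\{X_s\notin\cut(p)\}}\,\d s-L_t,
\end{align*}
where $M_t$ is a continuous local martingale on $[\,0,\,\zeta\,[$ and $L_t$ is a continuous, non-decreasing process whose support is contained in $\{t:X_t\in\cut(p)\}$. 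The vector field $V$ enters only through the smooth-region drift $\Delta_V r_p$; the approximating upper barriers and the concavity estimate that produce $L_t$ are geometric and unaffected by adding a $C^1$ drift.

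For the martingale part, $\d\langle M\rangle_t=2|\nabla r_p(X_t)|^2\d t$ on the smooth region. Since $\cut(p)\cup\{p\}$ has zero $\nu_g$-measure and the one-dimensional marginals of ${\bf X}^V$ are absolutely continuous with respect to $\nu_g$ (the strong Feller property recorded in the proof of Lemma~\ref{lem:noinsidekilling}), Lebesgue-a.e.\ $t<\zeta$ satisfies $X_t\notin\cut(p)$, so $\d\langle M\rangle_t=2\,\d t$. By L\'evy's characterisation, $\beta_t:=M_t/\sqrt{2}$ is a one-dimensional Brownian motion, and the full localisation up to $\zeta$ is provided by the tightness $\tau_{G_n}\nearrow\zeta$ in Theorem~\ref{thm:tight}.

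The main obstacle I expect is the rigorous semimartingale treatment of $r_p\circ X$ across $\cut(p)$: writing down the non-decreasing process $L_t$ and verifying that it charges only $\{t:X_t\in\cut(p)\}$ requires a careful barrier/Tanaka argument rather than a direct It\^o computation. In the Brownian case this is classical; the novelty here is that, because $V$ is $C^1$, the horizontal SDE for $U_t$ has a regular drift and the cut-locus correction carries over verbatim, so no new estimate is actually required beyond organising the classical proof around the operator $\Delta_V$ in place of $\Delta$.
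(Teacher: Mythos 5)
Your proposal is correct and follows essentially the same route as the paper, which itself proves nothing new here but defers to the classical Kendall-type argument (Itô's formula off the cut locus, an upper-barrier/Tanaka argument producing the non-decreasing term $L_t$ supported on $\{t:X_t\in\mathrm{Cut}(p)\}$, and Lévy's characterisation for the martingale part) as in Hsu's Theorem~3.5.1 and Wang's (3.7.10), noting only that the $C^1$ drift $V$ changes the smooth-region drift from $\Delta r_p$ to $\Delta_V r_p$ without affecting the cut-locus analysis. Your identification of the barrier argument as the only delicate point, and your observation that the drift is harmless there, match the paper's (implicit) reasoning.
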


{\color{black}{

\begin{lem}\label{lem:squaredistance}
Suppose {\bf (B3)}.
Let $\{G_n\}$ be an increasing sequence $\{G_n\}$ of relatively compact open sets  satisfying $\overline{G_n}\subset G_{n+1}\subset M$ for any $n\in\N$ and $M=\bigcup_{n=1}^{\infty}G_n$. 
\begin{align}
{\E}_x\left[r_p^2(X_{t\land\sigma_{M\setminus G_n}})\right]
&\leq \mathfrak{D}(t),
\label{eq:quadratic}
\\
{\E}_x\left[r_p^4(X_{t\land\sigma_{M\setminus G_n}})\right]
&\leq r_p(x)^4+4(3+D)\int_0^t {\mathfrak D}(s)\d s\notag \\
&\hspace{1cm}+4De^{4Dt}\int_0^t\left(r_p^4(x)+4(D+3)\int_0^s{\mathfrak D}(u)\d u\right)e^{-4Ds}\d s\label{eq:quaternic}
\end{align}
hold for all $x\in M$. Here 
$$
{\mathfrak D}(t):=r_p(x)^2+2(1+D)t+2De^{2Dt}\int_0^t(r_p^2(x)+2(1+D)s)e^{-2Ds}\d s.
$$ 
\end{lem}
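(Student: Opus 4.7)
The plan is to apply Itô's formula to $\phi\circ r_p(X_\cdot)$, successively with $\phi(r)=r^2$ and $\phi(r)=r^4$, using the Kendall decomposition \eqref{eq:Kendall} of Theorem~\ref{thm:KendallDecom}, and then to close each of the two resulting expectation inequalities by a Gr\"onwall argument. Fix $n\in\N$ and set $\tau:=t\wedge\sigma_{M\setminus G_n}$. Because $G_n$ is relatively compact and Lemma~\ref{lem:noinsidekilling} yields $\sigma_{M\setminus G_n}<\zeta$ ${\P}_x$-a.s., the stopped trajectory $X_{\cdot\wedge\tau}$ stays in $\overline{G_n}$, so $r_p(X_s)$ is uniformly bounded on $[0,\tau]$; this will make the $\d\beta$-integrals that arise honest $L^2$-martingales that vanish under ${\E}_x$.

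For \eqref{eq:quadratic}, Itô's formula with $\phi(r)=r^2$ (using $\d\langle r_p(X)\rangle_s=2\,\d s$) together with the nonnegativity of $L$ (which lets us drop $-2\int_0^{\tau}r_p(X_s)\,\d L_s\leq 0$) gives the pathwise upper bound
\begin{align*}
r_p^2(X_{\tau})\leq r_p^2(x)+2\sqrt{2}\!\int_0^{\tau}\!r_p(X_s)\,\d\beta_s+2\!\int_0^{\tau}\!r_p(X_s)\Delta_V r_p(X_s)\1_{\{X_s\notin\cut(p)\}}\d s+2\tau.
\end{align*}
The hypothesis {\bf (B3)} provides $r_p\Delta_V r_p\leq D(1+r_p^2)$ off $\cut(p)$; taking ${\E}_x$, using Fubini and $\tau\leq t$, and noting that the stochastic integral has zero expectation by the boundedness remark above, we arrive at
\begin{align*}
g(t):={\E}_x\bigl[r_p^2(X_{\tau})\bigr]\leq r_p(x)^2+2(1+D)t+2D\int_0^t g(s)\,\d s.
\end{align*}
The integral form of Gr\"onwall's lemma, applied to the nondecreasing function $a(t)=r_p(x)^2+2(1+D)t$ with rate $2D$, then delivers exactly $g(t)\leq\mathfrak{D}(t)$.

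For \eqref{eq:quaternic} the same procedure is carried out with $\phi(r)=r^4$, producing an extra quadratic-variation contribution $12\int_0^{\tau}r_p^2(X_s)\,\d s$ and the local-time term $-4\int_0^{\tau}r_p^3(X_s)\,\d L_s\leq 0$, which we again drop. Using the consequence $r_p^3\Delta_V r_p\leq D(r_p^2+r_p^4)$ of {\bf (B3)}, combining $12+4D=4(D+3)$ in the $r_p^2$ coefficient, taking ${\E}_x$, and substituting the first bound ${\E}_x\bigl[r_p^2(X_{s\wedge\sigma_{M\setminus G_n}})\bigr]\leq\mathfrak{D}(s)$ term-by-term, we obtain
\begin{align*}
h(t):={\E}_x\bigl[r_p^4(X_{\tau})\bigr]\leq r_p(x)^4+4(D+3)\int_0^t\mathfrak{D}(s)\,\d s+4D\int_0^t h(s)\,\d s.
\end{align*}
A second application of Gr\"onwall with $a(t):=r_p(x)^4+4(D+3)\int_0^t\mathfrak{D}(s)\,\d s$ and rate $4D$ produces exactly \eqref{eq:quaternic}. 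The only delicate ingredient is the vanishing of the expectations of $\int_0^{\tau}r_p^{2k-1}(X_s)\,\d\beta_s$ for $k=1,2$; this is ensured by the uniform boundedness of $r_p\circ X$ on the stopped interval $[0,\tau]$, itself a consequence of Lemma~\ref{lem:noinsidekilling}. Everything else is a routine Itô–Gr\"onwall calculation, and {\bf (B3)} is used only through the two inequalities $r_p\Delta_V r_p\leq D(1+r_p^2)$ and $r_p^3\Delta_V r_p\leq D(r_p^2+r_p^4)$.
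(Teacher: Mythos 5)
Your proposal is correct and follows essentially the same route as the paper: Itô's formula applied to $r_p^2$ and $r_p^4$ via the Kendall decomposition of Theorem~\ref{thm:KendallDecom}, dropping the nonnegative local-time terms, invoking {\bf (B3)} through $r_p\Delta_V r_p\leq D(1+r_p^2)$ (and its consequence for $r_p^3\Delta_V r_p$), and closing with two successive Gr\"onwall applications. Your justification that the stopped process stays in $\overline{G_n}$ (via Lemma~\ref{lem:noinsidekilling}), making the $\d\beta$-integrals true martingales, matches the paper's use of \eqref{eq:insidebehavior} to guarantee finiteness of the expectations before applying Gr\"onwall.
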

\begin{proof}[\bf Proof]
By Theorem~\ref{thm:KendallDecom}, 
the radial process $r_p(X_t)$ is a semimartingale on $[0,\zeta[$ in the sense that 
$t\mapsto r_p(X_{t\land \sigma_{M\setminus G_n}})$ is a semimartingale for each $n\in\mathbb{N}$.
By way of It\^o's formula, we have
\begin{align*}
r_p^2(X_t)-r_p^2(X_0)&=2\sqrt{2}\int_0^tr_p(X_s)\d\beta_s+
2t\\
&\hspace{1.5cm}+2\int_0^t r_p(X_s)\Delta_Vr_p(X_s)\1_{M\setminus {\rm Cut}(p)}(X_s)\d s-2\int_0^t r_p(X_s)\d L_s,\\
r_p^4(X_t)-r_p^4(X_0)&=4\sqrt{2}\int_0^tr_p^3(X_s)\d\beta_s+12\int_0^t r_p^2(X_s)\d s
\\
&\hspace{1.5cm}+4\int_0^t r_p^3(X_s)\Delta_Vr_p(X_s)\1_{M\setminus {\rm Cut}(p)}(X_s)\d s-4\int_0^t r_p^3(X_s)\d L_s,
\quad 
\end{align*}
$t\in[\,0,\,\zeta\,[$, 
${\P}_x$-a.s.~for $x\in M$.
From these equalities, we can obtain \eqref{eq:quadratic} and \eqref{eq:quaternic}.  
Indeed, by \eqref{eq:insidebehavior}, 
we see from {\bf (B3)} that 
\begin{align*}
{\E}_x\left[r_p^2(X_{t\land\sigma_{M\setminus G_n}})\right]&\leq r_p^2(x)+2t+2D{\E}_x\left[\int_0^{t\land\sigma_{M\setminus G_n}}(1+r_p^2(X_s))\d s\right]\\
&=r_p^2(x)+2(1+D)t+2D{\E}_x\left[\int_0^tr_p^2(X_s)\1_{\{s<\sigma_{M\setminus G_n}\}}\d s\right]\\
&\leq r_p^2(x)+2(1+D)t+2D\int_0^t{\E}_x\left[r_p^2(X_{s\land\sigma_{M\setminus G_n}}) \right]\d s.
\end{align*}
Applying the Gronwall's inequality to  the continuous function 
\begin{align*}
t\mapsto f(t):={\E}_x\left[r_p^2(X_{t\land\sigma_{M\setminus G_n}})\right]<+\infty
\end{align*}
under \eqref{eq:insidebehavior},  
we obtain 
\eqref{eq:quadratic}. For \eqref{eq:quaternic}, we have similarly  
\begin{align*}
{\E}_x\left[r_p^4(X_{t\land\sigma_{M\setminus G_n}})\right]&\leq r_p^4(x)+4(D+3)\int_0^t\mathfrak{D}(s)\d s+4
D\int_0^t{\E}_x\left[r_p^4(X_{s\land\sigma_{M\setminus G_n}})\right]\d s,
\end{align*}
and applying the Gronwall's inequality to the continuous function 
\begin{align*}
t\mapsto g(t):={\E}_x\left[r_p^4(X_{t\land\sigma_{M\setminus G_n}})\right]<+\infty
\end{align*}
under \eqref{eq:insidebehavior},  
we obtain \eqref{eq:quaternic}. 
\end{proof}

\begin{cor}\label{cor:conservative}
Suppose {\bf (B3)}. Then ${\bf X}^V$ is conservative. 
\end{cor}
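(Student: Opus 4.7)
The plan is to use the a priori second-moment bound \eqref{eq:quadratic} from Lemma~\ref{lem:squaredistance} to rule out explosion in finite time. Fix $x \in M$ and take the exhaustion $G_n := B_n(p)$, so that $M \setminus G_n = \{y \in M \mid r_p(y) \geq n\}$. On the event $\{\sigma_{M \setminus G_n} \leq t\}$, by continuity of the paths of ${\bf X}^V$ and the definition of the first hitting time of the closed set $M \setminus G_n$, we have $r_p(X_{t \wedge \sigma_{M \setminus G_n}}) \geq n$. Hence by Markov's inequality and Lemma~\ref{lem:squaredistance},
\begin{align*}
\P_x\bigl(\sigma_{M \setminus G_n} \leq t\bigr) \leq \frac{1}{n^2}\,\E_x\!\left[r_p^2(X_{t \wedge \sigma_{M \setminus G_n}})\right] \leq \frac{\mathfrak{D}(t)}{n^2}.
\end{align*}

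Next I would pass to the limit $n \to \infty$. Since $n \mapsto \sigma_{M \setminus G_n}$ is non-decreasing, the events $\{\sigma_{M \setminus G_n} \leq t\}$ are non-increasing in $n$, so by continuity of probability,
\begin{align*}
\P_x\!\left(\lim_{n\to\infty}\sigma_{M \setminus G_n} \leq t\right) = \lim_{n\to\infty}\P_x\bigl(\sigma_{M \setminus G_n} \leq t\bigr) \leq \lim_{n\to\infty}\frac{\mathfrak{D}(t)}{n^2} = 0.
\end{align*}
Finally I invoke Theorem~\ref{thm:tight}: since $\tau_{G_n} = \sigma_{M \setminus G_n} \wedge \zeta \leq \sigma_{M \setminus G_n}$ and $\tau_{G_n} \nearrow \zeta$ $\P_x$-a.s., we obtain $\zeta \leq \lim_{n\to\infty}\sigma_{M \setminus G_n}$. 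Therefore $\P_x(\zeta \leq t) = 0$ for every $t > 0$, and letting $t \to \infty$ yields $\P_x(\zeta = +\infty) = 1$, which is the conservativeness of ${\bf X}^V$.

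The substantive ingredient is already packaged in Lemma~\ref{lem:squaredistance}; once the uniform-in-$n$ second-moment bound $\mathfrak{D}(t)$ is in hand, the only thing to check is the elementary lower bound $r_p(X_{t \wedge \sigma_{M \setminus G_n}}) \geq n$ on the exit event, which is automatic for the radial exhaustion $G_n = B_n(p)$. Note that no separate argument for the absence of interior killing is needed here, because the joint implication $\lim_n \sigma_{M \setminus G_n} = +\infty$ obtained above, combined with \eqref{eq:nest2} of Lemma~\ref{lem:noinsidekilling}, forces $\zeta = +\infty$ identically. The only mildly delicate point in writing this out carefully is to make sure the use of Markov's inequality is applied to the stopped process (not to $X_t$ itself, which may not even be defined on $\{t \geq \zeta\}$), but this is handled transparently by working with $t \wedge \sigma_{M \setminus G_n}$, under which Lemma~\ref{lem:squaredistance} directly supplies the required bound.
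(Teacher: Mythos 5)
Your proof is correct and rests on the same two ingredients as the paper's: the uniform second-moment bound \eqref{eq:quadratic} of Lemma~\ref{lem:squaredistance} and the identification of $\zeta$ with $\lim_{n}\sigma_{M\setminus G_n}$. The paper phrases the final step via Fatou's lemma, bounding $\E_x[r_p^2(X_{\zeta-}):t\ge\zeta,\,\zeta<+\infty]\le\mathfrak{D}(t)<+\infty$ and combining this with the absence of interior killing \eqref{eq:insidekilling}; your Chebyshev estimate $\P_x(\sigma_{M\setminus G_n}\le t)\le\mathfrak{D}(t)/n^2$ is the same computation in a slightly different dress. One caution about the write-up: the sentence deducing $\P_x(\zeta\le t)=0$ from $\zeta\le\lim_{n}\sigma_{M\setminus G_n}$ is, taken on its own, a non sequitur --- the inequality points the wrong way, since $\lim_{n}\sigma_{M\setminus G_n}>t$ a.s.\ does not by itself preclude $\zeta\le t$. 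Your argument only closes because of the appeal to \eqref{eq:nest2} in the final paragraph, i.e., the a.s.\ equality $\lim_{n}\sigma_{M\setminus G_n}=\zeta$; and that equality is precisely the no-interior-killing content of Lemma~\ref{lem:noinsidekilling}, so the remark that ``no separate argument for the absence of interior killing is needed'' understates how essential that lemma is to your proof, exactly as it is to the paper's.
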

\begin{proof}[\bf Proof] 
By \eqref{eq:nest2} and \eqref{eq:quadratic}, 
\begin{align*}
\E_x[r_p^2(X_{\zeta-}): t\geq\zeta, \zeta<+\infty]\,&\hspace{-0.1cm}\stackrel{\text{\tiny\eqref{eq:nest2}}}{=}
\E_x[\lim_{n\to\infty} r_p^2(X_{\sigma_{M\setminus G_n}}): t\geq\zeta, \zeta<+\infty]\\
&=\E_x[\lim_{n\to\infty} r_p^2(X_{t\land \sigma_{M\setminus G_n}}): t\geq\zeta, \zeta<+\infty]\\
&\leq\varliminf_{n\to\infty}\E_x[r_p^2(X_{t\land \sigma_{M\setminus G_n}})]\,\\
&\hspace{-0.2cm}\stackrel{\text{\text{\tiny\eqref{eq:quadratic}}}}{\leq}\mathfrak{D}(t)<+\infty.
\end{align*}
This and \eqref{eq:insidekilling} together imply  $\P_x(t\geq \zeta,\zeta<+\infty)=0$ for all $x\in M$. 
Therefore, we obtain $\P_x(\zeta<+\infty)=0$ for all $x\in M$.  
\end{proof}

Thanks to the conservativeness of ${\bf X}^V$ under {\bf (B3)}, hereafter, we may write $\tau_G=\sigma_{M\setminus G}$ for any relatively compact open subset $G$ of $M$. 
}}

\section{Bochner Identity}\label{sec:Bochner}
In this section, based on the
Bochner identity for smooth map $u:M\to N$ in terms of usual Laplace-Bertrami operator $\Delta$, we establish the
Bochner identity for $u:M\to N$ in terms of $\Delta_V$.
We denote the Levi-Civita connection on $M$ by $\nabla^M$.
Let $\{e_i\}_{i=1}^n$ be the canonical ONB of $TM$, i.e., $e_i=\left(\partial/\partial x^i \right)_x$, and $u^{-1}TN$ the induced
vector bundle from $TN$ by $u$, i.e., the fibre bundle over $M$ with fibre $T_{u(x)}N$ for $x\in M$. Moreover $\nabla^{u^{-1}TN}$ denotes the induced connection from the Levi-Civita
connection $\nabla^N$ on $N$. Let $\nabla$ be the connection over $TM^*\otimes u^{-1}TN$ induced from
the dual connection
$\nabla^*$ over $TM^*$ and the connection $\nabla^{u^{-1}TN}$ over
$u^{-1}TN$ (see \cite[(3.23)]{Nishikawa} for the definition of $\nabla$).
The \emph{Hessian} of $u:M\to N$ is the symmetric bilinear map
${\rm Hess}\,u:TM\times TM\to TN$ defined by
${\rm Hess}\,u(X,Y):=(\nabla_X\d u)(Y)=
\nabla_X^{u^{-1}TN}(\d u(Y))-\d u(\nabla_X^MY)$ for $X,Y\in\Gamma(TM)$
(see \cite[pp.~93--94]{Nishikawa} for definitions $\nabla':=\nabla^N$ and $'\nabla:=\nabla^{u^{-1}TN}$).
The tension field of $u$ is given by
$\tau(u):=\sum_{i=1}^n{\rm Hess}\,u(e_i,e_i)=\sum_{i=1}^n(\nabla_{e_i}\d u)(e_i)$ and the energy density of $u$ is given by $e(u):=\frac12|\d u|^2=\frac12\sum_{i=1}^n\langle\d u(e_i),\d u(e_i)\rangle$.
\begin{lem}[Bochner Identity]\label{lem:BochnerIdentity}
{\rm For any smooth map $u:M\to N$, we have
\begin{align}
\frac12\Delta_V|\d u|^2
&=|{\rm Hess}\,u|^2+\sum_{i=1}^n\left\langle\d u(e_i),\nabla_{e_i}^{u^{-1}TN}(\tau_V(u))\right\rangle
\notag\\
&\hspace{1cm}+\sum_{i,j=1}^n
{\Ric}_V^{\infty}(e_i,e_j)\left\langle\d u(e_i),\d u(e_j)\right\rangle\label{eq:BochnerIdentity}
\\
&\hspace{2cm}-
\sum_{i,j=1}^n
\left\langle{}^N{\rm Riem}(\d u(e_i),\d u(e_j))\d u(e_j),\d u(e_i)\right\rangle.\notag
\end{align}
Here ${}^N{\rm Riem}$ is the Riemannian curvature tensor for $(N,h)$.
}
\end{lem}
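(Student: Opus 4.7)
The plan is to derive the $V$-Bochner identity by adding a correction term to the classical Bochner identity for the ordinary Laplace--Beltrami operator. Recall that for any smooth map $u:M\to N$ we have the well-known formula
\begin{equation*}
\tfrac12\Delta|\d u|^2=|{\rm Hess}\,u|^2+\sum_{i=1}^n\langle\d u(e_i),\nabla_{e_i}^{u^{-1}TN}\tau(u)\rangle+\sum_{i,j}{\rm Ric}_g(e_i,e_j)\langle\d u(e_i),\d u(e_j)\rangle-\sum_{i,j}\langle{}^N{\rm Riem}(\d u(e_i),\d u(e_j))\d u(e_j),\d u(e_i)\rangle,
\end{equation*}
which can be found e.g.\ in Nishikawa's book. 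Since $\Delta_V=\Delta-\langle V,\nabla\cdot\rangle$, the difference with the statement of the lemma is simply
\begin{equation*}
\tfrac12\Delta_V|\d u|^2-\tfrac12\Delta|\d u|^2=-\tfrac12\,V|\d u|^2.
\end{equation*}
So the whole task is to rewrite $-\tfrac12\,V|\d u|^2$ in a way that (i) converts $\tau(u)$ into $\tau_V(u)$ and (ii) converts ${\rm Ric}_g$ into ${\rm Ric}_V^{\infty}$.

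I would work at an arbitrary point $x_0\in M$ and choose a local orthonormal frame $\{e_i\}_{i=1}^n$ of $TM$ obtained by parallel transport along radial geodesics from $x_0$, so that $\nabla^M_{e_j}e_i(x_0)=0$. Writing $V=\sum_kV^ke_k$, this also gives $\nabla^M_Ve_i(x_0)=0$. Using the definition of the Hessian, at $x_0$,
\begin{equation*}
\nabla_V^{u^{-1}TN}(\d u(e_i))={\rm Hess}\,u(V,e_i)={\rm Hess}\,u(e_i,V)=\nabla_{e_i}^{u^{-1}TN}(\d u(V))-\d u(\nabla_{e_i}^MV),
\end{equation*}
where we used the symmetry of the Hessian (i.e.\ torsion-freeness of $\nabla^M$ and the compatibility of $\nabla^{u^{-1}TN}$ with $\nabla^N$). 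Pairing with $\d u(e_i)$ and summing gives
\begin{equation*}
\tfrac12\,V|\d u|^2=\sum_{i=1}^n\langle\nabla_{e_i}^{u^{-1}TN}(\d u(V)),\d u(e_i)\rangle-\sum_{i=1}^n\langle\d u(\nabla_{e_i}^MV),\d u(e_i)\rangle.
\end{equation*}

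For the second sum I would expand $\nabla_{e_i}^MV=\sum_j\langle\nabla_{e_i}V,e_j\rangle e_j$ and symmetrize in $(i,j)$ using that $\langle\d u(e_i),\d u(e_j)\rangle$ is symmetric:
\begin{equation*}
\sum_i\langle\d u(\nabla_{e_i}^MV),\d u(e_i)\rangle=\tfrac12\sum_{i,j}\bigl(\langle\nabla_{e_i}V,e_j\rangle+\langle\nabla_{e_j}V,e_i\rangle\bigr)\langle\d u(e_i),\d u(e_j)\rangle=\tfrac12\sum_{i,j}\mathcal{L}_Vg(e_i,e_j)\langle\d u(e_i),\d u(e_j)\rangle.
\end{equation*}
Substituting back, $-\tfrac12V|\d u|^2$ contributes $-\sum_i\langle\nabla_{e_i}^{u^{-1}TN}(\d u(V)),\d u(e_i)\rangle+\tfrac12\sum_{i,j}\mathcal{L}_Vg(e_i,e_j)\langle\d u(e_i),\d u(e_j)\rangle$. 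The first piece combines with the tension-field term in the classical Bochner identity by linearity of $\nabla^{u^{-1}TN}$ to produce $\sum_i\langle\d u(e_i),\nabla_{e_i}^{u^{-1}TN}(\tau(u)-\d u(V))\rangle=\sum_i\langle\d u(e_i),\nabla_{e_i}^{u^{-1}TN}\tau_V(u)\rangle$, and the second combines with ${\rm Ric}_g$ by the very definition ${\rm Ric}_V^{\infty}={\rm Ric}_g+\tfrac12\mathcal{L}_Vg$. This yields \eqref{eq:BochnerIdentity} at $x_0$, and since $x_0\in M$ was arbitrary, the identity holds pointwise on $M$.

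The only real subtlety is keeping the three distinct connections $\nabla^M$, $\nabla^N$, $\nabla^{u^{-1}TN}$ straight and invoking the symmetry of the Hessian correctly, which is why working in a frame parallel at $x_0$ is essential; otherwise one picks up commutator terms that eventually cancel, but the computation becomes notationally cumbersome. No deeper ingredients are needed beyond the classical Bochner identity and the definitions of ${\rm Ric}_V^{\infty}$ and $\tau_V(u)$.
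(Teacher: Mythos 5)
Your proposal is correct and follows essentially the same route as the paper: both start from the classical Bochner identity for $\Delta$ and reduce the problem to rewriting $-\tfrac12 V|\d u|^2$ via the symmetry of the second fundamental form $\nabla\d u$ (your ``symmetry of the Hessian'' in a frame parallel at $x_0$ is exactly the commutation identity $\nabla_{e_i}^{u^{-1}TN}(\d u(V))-\nabla_V^{u^{-1}TN}(\d u(e_i))=\d u([e_i,V])$ that the paper invokes from Nishikawa). The only difference is presentational: the paper carries out the cancellation invariantly with Lie brackets rather than in a normal frame.
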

\begin{proof}[\bf Proof]
The assertion seems to be known for experts. 
For completeness we give a proof. 
The proof of \eqref{lem:BochnerIdentity} is based on
the following Bochner identity for smooth map $u$
\begin{align}
\frac12\Delta|\d u|^2
&=|{\rm Hess}\,u|^2+\sum_{i=1}^n\left\langle\d u(e_i),\nabla_{e_i}^{u^{-1}TN}(\tau(u))\right\rangle
\notag\\
&\hspace{1cm}+\sum_{i,j=1}^n
{\Ric}_g(e_i,e_j)\left\langle\d u(e_i),\d u(e_j)\right\rangle\label{eq:BochnerWeitzen}
\\&\hspace{2cm}-
\sum_{i,j=1}^n
\left\langle{}^N{\rm Riem}(\d u (e_i),\d u(e_j))\d u(e_j),\d u(e_i)\right\rangle.\notag
\end{align}
The Bochner identity \eqref{eq:BochnerWeitzen} for smooth map $u:M\to N$ is well-known for experts (see~\cite[Proposition~1.5]{AliMasRigo}, \cite[Corollary~2.10]{Loustau}). From \eqref{eq:BochnerWeitzen}, we can deduce \eqref{eq:BochnerIdentity} as follows:

\begin{align*}
\frac12\Delta_V|\d u|^2&=\frac12\Delta|\d u|^2-\frac12\langle V,\nabla |\d u|^2\rangle
\\
&\hspace{-0.1cm}\stackrel{\text{\tiny \eqref{eq:BochnerWeitzen}}}{=}|{\rm Hess}\, u|^2+\sum_{i=1}^n
\left\langle \d u(e_i),\nabla_{e_i}^{u^{-1}TN}(\tau_V(u))\right\rangle\\
&\hspace{1cm}+\sum_{i,j=1}^n
{\Ric}_V^{\infty}(e_i,e_j)\left\langle \d u(e_i),\d u(e_j)\right\rangle 
\\
&\hspace{2cm}-
\sum_{i,j=1}^n
\left\langle {}^N{\rm Riem}(\d u(e_i),\d u(e_j))\d u(e_j),\d u(e_i)\right\rangle\\
&\hspace{1cm}+\sum_{i=1}^n\left\langle \d u (e_i),\nabla_{e_i}^{u^{-1}TN}(\d u(V))\right\rangle\\
&\hspace{2cm}+\frac12\sum_{i,j=1}^n\left(\left\langle \nabla_{e_i}^MV,e_j\right\rangle+
\left\langle \nabla_{e_j}^MV,e_i\right\rangle \right)\left\langle \d u(e_i),\d u(e_j)\right\rangle\\
&\hspace{3cm}-\frac12\langle V,\nabla|\d u|^2\rangle.
\end{align*}
So it suffices to prove that the last three terms vanish. Observe
\begin{align*}
|\d u|^2=\sum_{i=1}^n|\d u(e_i)|^2.
\end{align*}
By definition of the connection $\nabla$ in $TM^*\otimes u^{-1}TN$,
\begin{align*}
\frac12V|\d u|^2&=\sum_{i=1}^n\langle \nabla_V\d u(e_i),\d u(e_i)\rangle
\\
&=\sum_{i=1}^n\left\langle \nabla \d u(V,e_i),\d u(e_i)\right\rangle\\
&
=\sum_{i=1}^n\left\langle \nabla_V^{u^{-1}TN}\d u(e_i)-\d u(\nabla_V^Me_i),\d u(e_i)\right\rangle\qquad \text{(by  \cite[(3.25)]{Nishikawa})}\\
&
=\sum_{i=1}^n\left\langle\d u(e_i),\nabla_V^{u^{-1}TN}\d u(e_i)\right\rangle-\sum_{i=1}^n
\left\langle\d u(\nabla_V^Me_i),\d u(e_i) \right\rangle.
\end{align*}
Then the sum of last three terms becomes
\begin{align*}
&\sum_{i=1}^n\left\langle\d u(e_i),\nabla_{e_i}^{u^{-1}TN}(\d u(V))\right\rangle-\sum_{i=1}^n\left\langle
\d u(e_i),\nabla_V^{u^{-1}TN}\d u(e_i)\right\rangle\\
&\hspace{1cm}+\sum_{i=1}^n\left\langle \d u(\nabla_V^Me_i),\d u(e_i)\right\rangle-\sum_{i,j=1}^n\left\langle \nabla_{e_i}^MV,e_j\right\rangle\langle \d u(e_i),\d u(e_j)\rangle
\\
&=\sum_{i=1}^n\left\langle \d u(e_i),\nabla_{e_i}^{u^{-1}TN}(\d u(V))-\nabla_V^{u^{-1}TN}\d u(e_i)\right\rangle\\
&\hspace{1cm}+\sum_{i=1}^n\left\langle\d u(\nabla_V^Me_i),\d u(e_i)\right\rangle-
\sum_{i,j=1}^n\left\langle\nabla_{e_i}^MV,e_j\right\rangle\left\langle\d u(e_i),\d u(e_j)\right\rangle
\\
&=\sum_{i=1}^n\left\langle\d u(e_i),\d u([e_i,V])
\right\rangle\qquad(\text{by \cite[Lemma~3.7]{Nishikawa}})\\
&\hspace{1cm}+\sum_{i=1}^n\left\langle\d u(\nabla_V^Me_i),\d u(e_i)\right\rangle-
\sum_{i,j=1}^n\left\langle\nabla_{e_i}^MV,e_j\right\rangle\left\langle\d u(e_i),\d u(e_j)\right\rangle\\
&=\sum_{i=1}^n\left\langle\d u(e_i),\d u(\nabla_{e_i}^MV-\nabla_V^Me_i)
\right\rangle\\
&\hspace{1cm}+\sum_{i=1}^n\left\langle\d u(\nabla_V^Me_i),\d u(e_i)\right\rangle-\sum_{i=1}^n\left\langle\d u(e_i),\d u(\nabla_{e_i}^MV)\right\rangle\\&=0.
\end{align*}
\end{proof}
We need the following lemma to prove Corollary~\ref{cor:BochnerIdentity}.
\begin{lem}\label{lem:CauchySchwarz}
Let $H$ be a real separable Hilbert space and suppose that $h_{ij}\in H$ satisfies $h_{ij}=h_{ji}$ for $i,j=1,2,\cdots, n$. Then we have
\begin{align*}
\sum_{i,j=1}^n\|h_{ij}\|_H^2\geq\frac{1}{n}\left\|\sum_{i=1}^nh_{ii}\right\|_H^2.
\end{align*}
If further the symmetric matrix $(h_{ij})\in H^n\otimes H^n$ has a null eigenvalue
$0_H\in H$ with multiplicity $k\in\{1,2,\cdots, n-1\}$, then  we have
\begin{align*}
\sum_{i,j=1}^n\|h_{ij}\|_H^2\geq\frac{1}{n-k}\left\|\sum_{i=1}^nh_{ii}\right\|_H^2.
\end{align*}
\end{lem}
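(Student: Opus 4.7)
The first inequality is purely Cauchy--Schwarz applied to the diagonal: from $\bigl\|\sum_{i=1}^n h_{ii}\bigr\|_H^2\le n\sum_{i=1}^n\|h_{ii}\|_H^2$ and the trivial bound $\sum_{i=1}^n\|h_{ii}\|_H^2\le \sum_{i,j=1}^n\|h_{ij}\|_H^2$, we immediately get
\[
\sum_{i,j=1}^n\|h_{ij}\|_H^2\ \ge\ \frac{1}{n}\Bigl\|\sum_{i=1}^n h_{ii}\Bigr\|_H^2.
\]

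For the refined inequality under the eigenvalue hypothesis, the plan is to make an orthogonal change of basis in $\R^n$ that diagonalizes away the null directions, and then invoke the first part on the remaining $(n-k)\times(n-k)$ block. Interpret ``$(h_{ij})$ has null eigenvalue $0_H$ with multiplicity $k$'' as the existence of an orthonormal family $v_1,\dots,v_k\in\R^n$ (completed to an orthonormal basis $v_1,\dots,v_n$) such that $\sum_{b=1}^n(v_\ell)_b\,h_{ab}=0_H$ in $H$ for every $a$ and every $\ell\le k$. Define the transformed matrix
\[
h'_{ij}\ :=\ \sum_{a,b=1}^n (v_i)_a\,(v_j)_b\,h_{ab}\qquad(i,j=1,\dots,n),
\]
which is again symmetric because $(h_{ab})$ is. The key observation is that both relevant quantities are invariant under this orthogonal transformation: since $O:=(v_i)_{i}\in O(n)$, one has $\sum_{i,j}\|h'_{ij}\|_H^2=\sum_{i,j}\|h_{ij}\|_H^2$ (the Hilbert--Schmidt norm is $O(n)$-invariant) and $\sum_i h'_{ii}=\sum_i h_{ii}$ (the trace is $O(n)$-invariant).

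By the eigenvalue assumption, $h'_{i\ell}=\sum_a(v_i)_a\bigl(\sum_b(v_\ell)_b h_{ab}\bigr)=0_H$ for every $\ell\le k$ and every $i$; by symmetry also $h'_{\ell j}=0_H$ for $\ell\le k$. Consequently only the lower-right $(n-k)\times(n-k)$ block $(h'_{ij})_{i,j>k}$ can be nonzero, and $\sum_{i=1}^n h'_{ii}=\sum_{i=k+1}^n h'_{ii}$. Applying the already-proved first inequality to this smaller symmetric matrix yields
\[
\sum_{i,j=1}^n\|h_{ij}\|_H^2
=\sum_{i,j=k+1}^n\|h'_{ij}\|_H^2
\ge \frac{1}{n-k}\Bigl\|\sum_{i=k+1}^n h'_{ii}\Bigr\|_H^2
=\frac{1}{n-k}\Bigl\|\sum_{i=1}^n h_{ii}\Bigr\|_H^2,
\]
which is the claim.

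The only potentially delicate point is that the entries lie in the Hilbert space $H$ rather than in $\R$; but since all manipulations use only the real inner product structure of $\R^n$ and the norm on $H$ (never diagonalization of an $H$-valued matrix), the $H$-valued analogue of the classical trace/Hilbert--Schmidt invariance under orthogonal conjugation holds verbatim. There is no genuine obstacle, and the argument is essentially an orthogonal reduction followed by a one-line Cauchy--Schwarz.
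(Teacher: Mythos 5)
Your proof is correct, but it takes a genuinely different route from the paper's. For the first inequality the paper reduces to the scalar case: it expands each entry in a complete orthonormal basis $\{e_\ell\}$ of $H$ via Parseval's identity and invokes the real symmetric matrix fact $\|A\|^2=\mathrm{Tr}(A^2)\ge \mathrm{Tr}(A)^2/n$; your one-line estimate $\bigl\|\sum_i h_{ii}\bigr\|_H^2\le n\sum_i\|h_{ii}\|_H^2\le n\sum_{i,j}\|h_{ij}\|_H^2$ is more elementary, stays entirely in $H$, and does not even use the symmetry hypothesis. For the refined bound the paper again works componentwise, observing that $\mathrm{Ker}(h_{ij})=\bigcap_\ell\mathrm{Ker}(\langle h_{ij},e_\ell\rangle_H)$, so each scalar component matrix inherits a null eigenvalue of multiplicity at least $k$ and the scalar estimate with denominator $n-k$ applies; you instead conjugate by an orthogonal matrix of $\R^n$ adapted to the kernel, verify directly that the trace and the Hilbert--Schmidt norm of the $H$-valued matrix are preserved (both computations reduce to $\sum_i (v_i)_a(v_i)_b=\delta_{ab}$ and are carried out correctly), and apply the already-proved first part to the surviving $(n-k)\times(n-k)$ block. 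Your interpretation of the multiplicity-$k$ hypothesis as a $k$-dimensional subspace of $\R^n$ annihilated by the matrix agrees with the paper's definition of $\mathrm{Ker}(h_{ij})$, so the block reduction is legitimate. Your argument is self-contained and makes the improvement from $1/n$ to $1/(n-k)$ transparent, while the paper's componentwise reduction has the merit of importing the classical scalar trace inequality verbatim; both are complete proofs.
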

\begin{proof}[\bf Proof]
The first assertion can be easily confirmed for the case $H=\R$. In fact, in this case any real symmetric matrix $A=(a_{ij})$ satisfies $\|A\|^2={\rm Tr}(A^2)\geq\frac{{\rm Tr}(A)^2}{n}$. For general case, letting $\{e_{\ell}\}_{\ell=1}^{\infty}$ be the complete orthonormal base in $H$, the assertion follows from the Parseval's identity $\|h\|_H^2=\sum_{\ell=1}^{\infty}\langle h,e_{\ell}\rangle_H^2$ and the assertion for $H=\R$. Next we prove the second assertion.
In view of the Fourier expansion $h=\sum_{\ell=1}^{\infty}\langle h,e_{\ell}\rangle_He_{\ell}$ in $H$, we can prove
\begin{align*}
{\rm Ker}(h_{ij})=\bigcap_{\ell=1}^{\infty}{\rm Ker}(\langle h_{ij},e_{\ell}\rangle_H),
\end{align*}
where
\begin{align*}
{\rm Ker}(h_{ij}):=\{\phi\in \R^n\mid (h_{ij})\phi=0_{H^n}\}, \quad
{\rm Ker}(\langle h_{ij},e_{\ell}\rangle_H):=\{\phi\in \R^n\mid (\langle h_{ij},e_{\ell}\rangle_H)\phi=0_{\R^n}\}.
\end{align*}
Therefore, if $(h_{ij})$ has a null eigenvalue $0_H$ with multiplicity $k$, then
the real symmetric matrix $(\langle h_{ij},e_{\ell}\rangle_H)$  has the null eigenvalue $0$
with multiplicity $k$ for all $\ell\in\N$. The converse also holds.
The rest of the proof is similar to the proof of the first assertion.
\end{proof}
\begin{cor}\label{cor:BochnerIdentity}
Suppose that ${\Ric}_V^m\geq 0$ with $m\in\,[-\infty,\,0\,]\,\cup\,[\,n,+\infty\,]$ and ${\rm Sect}_N\leq0$. Then
any $V$-harmonic map $u:M\to N$ satisfies
\begin{align}
\Delta_V|\d u|^2\geq\frac{2m}{n(m-n)}|\d u(V)|^2\geq0.\label{eq:BochnerIneq}
\end{align}
If further the symmetric matrix $({\rm Hess}\,u(e_i,e_j))_{ij}$ for $V$-harmonic map $u:M\to N$ admits a null eigenvalue with multiplicity $k\in\{1,2,\cdots, n-1\}$ independent of the choice of fibre, then
$\Delta_V|\d u|^2\geq\frac{2(m-k)}{(n-k)(m-n)}|\d u(V)|^2\geq0$ under
${\Ric}_V^m\geq 0$ with $m\in ]-\infty,\,k\,]$ and ${\rm Sect}_N\leq0$.
\end{cor}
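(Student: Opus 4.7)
The plan is to derive the corollary as a direct consequence of the Bochner identity \eqref{eq:BochnerIdentity} by systematically eliminating or bounding each of its four terms. First, since $u$ is $V$-harmonic, $\tau_V(u)=0$, so the term $\sum_i\langle \d u(e_i),\nabla_{e_i}^{u^{-1}TN}(\tau_V(u))\rangle$ vanishes. Second, since ${\rm Sect}_N\leq 0$, the curvature term $-\sum_{i,j}\langle{}^N{\rm Riem}(\d u(e_i),\d u(e_j))\d u(e_j),\d u(e_i)\rangle$ is non-negative (standard fact for maps into non-positively curved targets), so it can be discarded from below. Thus
\[
\tfrac12\Delta_V|\d u|^2\;\geq\;|{\rm Hess}\,u|^2+\sum_{i,j=1}^n{\Ric}_V^{\infty}(e_i,e_j)\langle \d u(e_i),\d u(e_j)\rangle.
\]

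Next, I would rewrite ${\Ric}_V^{\infty}={\Ric}_V^m+\tfrac{V^*\otimes V^*}{m-n}$ (using the convention that the last term is $0$ when $m=\pm\infty$ or $m=n$). Since $({\Ric}_V^m(e_i,e_j))_{ij}$ is positive semidefinite by hypothesis and $(\langle\d u(e_i),\d u(e_j)\rangle)_{ij}$ is manifestly positive semidefinite (being a Gram matrix), their trace-pairing is non-negative. This yields
\[
\sum_{i,j=1}^n{\Ric}_V^{\infty}(e_i,e_j)\langle\d u(e_i),\d u(e_j)\rangle\;\geq\;\frac{|\d u(V)|^2}{m-n}.
\]
To handle the Hessian term, I apply Lemma~\ref{lem:CauchySchwarz} with $H=T_{u(x)}N$ and $h_{ij}={\rm Hess}\,u(e_i,e_j)$ (symmetric in $i,j$), noting that $V$-harmonicity gives $\sum_i {\rm Hess}\,u(e_i,e_i)=\tau(u)=\d u(V)$. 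This gives $|{\rm Hess}\,u|^2\geq \tfrac{1}{n}|\d u(V)|^2$.

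Combining these two bounds,
\[
\tfrac12\Delta_V|\d u|^2\;\geq\;\Bigl(\tfrac{1}{n}+\tfrac{1}{m-n}\Bigr)|\d u(V)|^2\;=\;\frac{m}{n(m-n)}|\d u(V)|^2,
\]
which is the desired inequality. The sign check $\tfrac{m}{n(m-n)}\geq 0$ splits naturally into the two ranges: for $m\in[\,n,+\infty\,]$ both $m$ and $m-n$ are non-negative, while for $m\in[-\infty,0\,]$ both are non-positive; the edge case $m=n$ is covered by the convention $V=0$. For the sharpened statement under a null eigenvalue of multiplicity $k$, I would run the identical argument but invoke the second inequality of Lemma~\ref{lem:CauchySchwarz}, replacing $\tfrac{1}{n}$ by $\tfrac{1}{n-k}$, which gives $\tfrac{m-k}{(n-k)(m-n)}|\d u(V)|^2$; the hypothesis $m\in\,]-\infty,k\,]$ (with $n>k$) again ensures non-negativity.

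The main (minor) obstacle is ensuring that the combined inequality works uniformly across the two disjoint ranges of $m$ despite the change of sign of $m-n$: the crucial point is that the Hessian estimate is unconditional (providing a lower bound of the correct magnitude $\tfrac{1}{n}|\d u(V)|^2$) while the $\Ric_V^m$ contribution is always non-negative; the algebraic identity $\tfrac{1}{n}+\tfrac{1}{m-n}=\tfrac{m}{n(m-n)}$ then makes the sign of the resulting coefficient transparent. Everything else is a routine application of previously established facts.
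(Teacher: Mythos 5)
Your proposal is correct and follows essentially the same route as the paper: drop the tension and target-curvature terms from the Bochner identity \eqref{eq:BochnerIdentity}, split ${\Ric}_V^{\infty}={\Ric}_V^m+\frac{V^*\otimes V^*}{m-n}$ and discard the non-negative trace-pairing with the Gram matrix, bound $|{\rm Hess}\,u|^2\geq\frac{|\tau(u)|^2}{n}=\frac{|\d u(V)|^2}{n}$ via Lemma~\ref{lem:CauchySchwarz}, and combine $\frac1n+\frac1{m-n}=\frac{m}{n(m-n)}$. The only cosmetic difference is that the paper first invokes \eqref{eq:order} to reduce to $m\in\,]-\infty,0]$, whereas you verify the sign of the coefficient directly on both ranges; both are fine.
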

\begin{proof}[\bf Proof]
We may assume $m\in]-\infty,\,0\,]$ by \eqref{eq:order}.
By Lemma~\ref{lem:CauchySchwarz},
\begin{align*}
|{\rm Hess}\,u|^2=\sum_{i,j=1}^n|{\rm Hess}\,u(e_i,e_j)|^2
\geq\frac{1}{n}\left|\sum_{i=1}^n{\rm Hess}\,u(e_i,e_i) \right|^2
=\frac{|\tau(u)|^2}{n}.
\end{align*}
Since $u:M\to N$ is $V$-harmonic, $\Ric_V^m\geq0$ and ${\rm Sect}_N\leq0$, we have
\begin{align*}
\frac12\Delta_V|\d u|^2&\geq |{\rm Hess}\,u|^2+\sum_{i,j=1}^n
{\Ric}_V^{\infty}(e_i,e_j)\langle \d u(e_i),\d u(e_j)\rangle\\
&=|{\rm Hess}\,u|^2+\sum_{i,j=1}^n
{\Ric}_V^m(e_i,e_j)\langle \d u(e_i),\d u(e_j)\rangle \\
&\hspace{1cm}+\frac{1}{m-n}\sum_{i,j=1}^n \langle V,e_i\rangle\langle V,e_j\rangle
\langle \d u(e_i),\d u(e_j)\rangle
\end{align*}
\begin{align*}
&\geq |{\rm Hess}\,u|^2+\frac{1}{m-n}\sum_{i,j=1}^n \langle V,e_i\rangle\langle V,e_j\rangle
\langle\d u(e_i),\d u(e_j)\rangle\\
&\geq \frac{|\tau(u)|^2}{n}+\frac{1}{m-n}|\d u(V)|^2
\\
&= \frac{|\d u(V)|^2}{n}+\frac{1}{m-n}|\d u(V)|^2\qquad (\text{by }\tau_V(u)=0)
\\&
=\frac{m}{n(m-n)}|\d u (V)|^2\geq0.
\end{align*}
In the second inequality above, we use the fact that any $n\times n$ symmetric non-negative definite real matrix $A=(a_{ij})$ satisfies $\sum_{i,j=1}^na_{ij}\langle h_i,h_j\rangle_H\geq0$ for any real Hilbert space $(H,\langle \cdot,\cdot\rangle)$ 
and $h_i\in H$ ($1\leq i\leq n$). If further $({\rm Hess}\,u(e_i,e_j))_{ij}$
for $V$-harmonic map $u:M\to N$ admits a null eigenvalue with multiplicity $k\in\{1,2,\cdots, n-1\}$ independent of the choice of fibre, then
$|{\rm Hess}\,u|^2\geq\frac{|\tau(u)|^2}{n-k}$, consequently
$\Delta_V|\d u|^2\geq \frac{2(m-k)}{(n-k)(m-n)}|\d u(V)|^2\geq0$ under $m\in]-\infty,\,k\,]$.
\end{proof}
\begin{remark}\label{rem:BochnerIdentity}
{\rm
\begin{enumerate}
\item For any smooth function $u:M\to\R$, the assertion of Lemma~\ref{lem:BochnerIdentity} is well-known:
\begin{align}
\frac12\Delta_V|\nabla u|^2=|{\rm Hess}\,u|^2+\langle \nabla\Delta_Vu,\nabla u\rangle+
{\Ric}_V^{\infty}(\nabla u,\nabla u).
\label{eq:BochnerIdentity*}
\end{align}
From \eqref{eq:BochnerIdentity*},
for $m\in\,[-\infty,\,0\,]\,\cup\,[\,n,+\infty\,]$, we have
\begin{align}
\frac12\Delta_V|\nabla u|^2\geq \frac{(\Delta_Vu)^2}{n}+\frac{2\Delta_Vu\langle V,\nabla u\rangle}{n}+\langle\nabla\Delta_Vu,\nabla u\rangle+{\Ric}_V^m(\nabla u,\nabla u),\label{eq:BochnerIdentity**}
\end{align}
which has been known for $m\in \,[\,n,+\infty\,]$. The inequality  \eqref{eq:BochnerIdentity**} also yields that any $V$-harmonic function $u:M\to\R$ satisfies $\Delta_V|\nabla u|^2\geq0$ under
${\Ric}_V^m\geq0$ with $m\in\,[-\infty,\,0\,]\,\cup\,[\,n,+\infty\,]$.
\item When $m\in \,[\,n, +\infty\,]$,  the third named author proved the following Bochner inequality  for $V=\nabla f$ (see p.~1303 in  \cite{Xdli:Liouville})
\begin{align}
\frac{1}{2} \Delta_V |\nabla u|^2 \geq
\Ric^{m}_V(\nabla u,\nabla u)+\frac{(\Delta_V u)^2}{m}+
\langle\nabla\Delta_V u,\nabla u\rangle. \label{eq:BochnernegativeN}
\end{align}
In ~\cite{Oh<0}, Ohta extended  $(\ref{eq:BochnernegativeN})$ to the case $m\in\,[\,-\infty,\,0\,[$, which is a special case of \eqref{eq:BochnerStrong}  below for $\eps=1$.
On the other hand, Kolesnikov-Milman~\cite{KolMil:2017} also
obtained \eqref{eq:BochnernegativeN} including the case $m=0$ with the convention $1/0-:=-\infty$.
Our inequality \eqref{eq:BochnerIdentity**} is different from  \eqref{eq:BochnernegativeN} at least for the case $m=0$.
The inequality \eqref{eq:BochnernegativeN} can be proved
for general $V$. Indeed, by the same argument as used for the proof of 
$(\ref{eq:BochnernegativeN})$  in  \cite[p.~1303]{Xdli:Liouville}  for $V=\nabla f$ and $m\in [\,n,\, +\infty\,]$,
the Bochner identity
\eqref{eq:BochnerWeitzen} for smooth function $u:M\to\R$ and the elementary inequality
$(a+b)^2\geq\frac{a^2}{1+\alpha}-\frac{b^2}{\alpha}$ for $a,b\in\R$ with $\alpha\in\,]-\infty,\,-1\,[\,\cup\,]\,0,\,+\infty\,[$
together show
\begin{align*}
\frac12\Delta_V|\nabla u|^2&\geq {\Ric}_V^{\infty}(\nabla u,\nabla u)+\frac{(\Delta_V u+\langle V,\nabla u\rangle)^2}{n}+\langle\nabla\Delta_V u,\nabla u\rangle\\
&\geq {\Ric}_V^m(\nabla u,\nabla u)+\frac{\langle V,\nabla u\rangle^2}{m-n}
+\frac{(\Delta_V u)^2}{n(1+\alpha)}-\frac{\langle V,\nabla u\rangle^2}{n\alpha}+\langle \nabla\Delta_V u,\nabla u\rangle\\
&={\Ric}_V^m(\nabla u,\nabla u)+\frac{(\Delta_V u)^2}{m}+\langle\nabla\Delta_V u,\nabla u\rangle,
\end{align*}
where we take $\alpha=\frac{m}{n}-1<-1$ for $m\in]-\infty,\,0\,[$ in the last equality.
\item When $V=\nabla f$ and $u:M\to \R$ is a smooth function,
we can deduce the following inequality for $m\in\,[\,-\infty,\,0\,]\,\cup\,[\,n,+\infty\,]$ and $\eps\in\R$ with
\begin{align*}
\varepsilon=0\, \text{ for }\, m=0,\,\,\, \varepsilon\in \left]-\sqrt{\frac{m}{m-n}},\sqrt{\frac{m}{m-n}} \right[  \, \text{ for }\,  m\ne 0,n,\,\,\, \varepsilon\in\R \, \text{ for } \, m=n:
\end{align*}
\begin{align}\notag
\frac{1}{2} \Delta_f |\nabla u|^2 & =   \Ric^{m}_{f}(\nabla u,\nabla u)+ \frac{1}{n}\left(1-\eps^2\frac{m-n}{m} \right)\,(\Delta_f u)^2\\ \notag
 &\hspace{3cm}+e^{-\frac{2(1-\eps)f}{n}}\langle\nabla (e^{\frac{2(1-\eps)f}{n}} \, \Delta_f u), \nabla u \rangle  \\ \notag
   &\hspace{1cm} +\frac{1}{n}\left(  \sqrt{\frac{m}{m-n}}\langle\nabla f,\nabla u \rangle +\eps \sqrt{\frac{m-n}{m}}\Delta_f u  \right)^2+\left\|\nabla^2u-\frac{\Delta u}{n}g \right\|^2\\
&\geq \Ric^{m}_f(\nabla u,\nabla u)+\frac{1}{n}\left(1-\eps^2\frac{m-n}{m} \right)(\Delta_f u)^2\notag\\
&\hspace{3cm}+e^{-\frac{2(1-\eps)f}{n}}\langle \nabla (e^{\frac{2(1-\eps)f}{n}} \, \Delta_f u), \nabla u \rangle.\label{eq:BochnerStrong}
\end{align}
The inequality \eqref{eq:BochnerStrong}
is meaningful even if $m=0$ and $\eps=0$ with the convention $\eps^2\frac{m-n}{m}=0$.
Our inequality \eqref{eq:BochnerIdentity**} for $V=\nabla f$ is a consequence of \eqref{eq:BochnerStrong} for $\eps=0$.
\item When $u:M\to N$ is $V$-harmonic, \eqref{eq:BochnerIdentity} is also proved in \cite[Lemma~1]{ChenJostQiu}.
\end{enumerate}
}
\end{remark}
\begin{lem}\label{lem:LapalcianDistaHarm}
Suppose that $(N,h)$ is a Cartan-Hadamard manifold.
Let $u:M\to N$ be a $V$-harmonic map. Then $\Delta_V {\color{black}{{\sf d}}}_N^{\,2}(u,o)\geq 2|\d u|^2$ for any $o\in N$.
\end{lem}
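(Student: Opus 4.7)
The plan is to reduce the claim to the standard Hessian comparison for the squared distance function on a Cartan-Hadamard manifold, combined with the composition formula for the $V$-Laplacian of $\phi\circ u$.

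First I would establish the composition identity. For any smooth $\phi\in C^2(N)$ and any smooth map $u:M\to N$, the classical chain rule for the Laplace-Beltrami operator gives
\begin{align*}
\Delta(\phi\circ u)=\sum_{i=1}^n{\rm Hess}\,\phi(\d u(e_i),\d u(e_i))+\langle \nabla^N\phi\circ u,\tau(u)\rangle.
\end{align*}
Since $\nabla(\phi\circ u)=(\d u)^{*}(\nabla^N\phi\circ u)$, one has $\langle V,\nabla(\phi\circ u)\rangle=\langle \nabla^N\phi\circ u,\d u(V)\rangle$, so subtracting yields
\begin{align*}
\Delta_V(\phi\circ u)=\sum_{i=1}^n{\rm Hess}\,\phi(\d u(e_i),\d u(e_i))+\langle \nabla^N\phi\circ u,\tau_V(u)\rangle.
\end{align*}
Because $u$ is $V$-harmonic, $\tau_V(u)=0$, so the second term drops out.

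Next I would apply this with $\phi(y)={\sf d}_N^{\,2}(y,o)$. Since $(N,h)$ is Cartan-Hadamard, the cut locus of $o$ is empty and $\phi$ is smooth on all of $N$. The Hessian comparison theorem in non-positive sectional curvature (compared against Euclidean space where ${\rm Hess}\,|y|^2=2h$) gives
\begin{align*}
{\rm Hess}\,\phi\geq 2h\quad\text{on}\quad N.
\end{align*}
Substituting into the composition identity yields
\begin{align*}
\Delta_V({\sf d}_N^{\,2}(u,o))=\sum_{i=1}^n{\rm Hess}\,\phi(\d u(e_i),\d u(e_i))\geq 2\sum_{i=1}^n|\d u(e_i)|^2=2|\d u|^2,
\end{align*}
which is exactly the claim.

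No step appears to be a serious obstacle: the chain rule for $\Delta(\phi\circ u)$ is standard, the passage to $\Delta_V$ is a one-line correction via the identity $\nabla(\phi\circ u)=(\d u)^{*}(\nabla^N\phi\circ u)$, and the Hessian bound on a Cartan-Hadamard manifold is a textbook consequence of the Rauch/Hessian comparison theorem. The only point requiring a line of care is justifying that the $V$-harmonicity of $u$ (rather than mere harmonicity) combined with the sign of the Hessian suffices, which is transparent from the decomposition above.
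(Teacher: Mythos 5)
Your proof is correct and follows essentially the same route as the paper: the paper also uses the composition formula for the Hessian of ${\sf d}_N^{\,2}(\cdot,o)\circ u$ (citing \cite[1.177]{AliMasRigo}), traces it, uses $\tau_V(u)=0$ to kill the first-order term, and invokes the Greene--Wu Hessian comparison ${\rm Hess}^{N}{\sf d}_N^{\,2}(\cdot,o)\geq 2h$ on a Cartan--Hadamard manifold. Your one-line reduction of $\Delta$ to $\Delta_V$ via $\langle V,\nabla(\phi\circ u)\rangle=\langle\nabla^N\phi\circ u,\d u(V)\rangle$ is exactly the step the paper performs implicitly when replacing $\tau(u)$ by $\tau_V(u)$.
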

\begin{proof}[\bf Proof]
By using \cite[1.177]{AliMasRigo},
the Hessian of the composite map of ${\color{black}{{\sf d}}}_N^{\,2}(\cdot,o):N\to\R$ and $u:M\to N$ is given by
\begin{align}
{\rm Hess}\,{\color{black}{{\sf d}}}_N^{\,2}(u,o)(e_i,e_j)={\rm Hess}^{N}{\color{black}{{\sf d}}}_N^{\,2}(\cdot,o)(\d u(e_i),\d u(e_j))+\d({\color{black}{{\sf d}}}_N^{\,2}(\cdot,o))({\rm Hess}\,u(e_i,e_j)).\label{eq:Hessian}
\end{align}
By taking a trace to \eqref{eq:Hessian}, we have
\begin{align*}
\Delta {\color{black}{{\sf d}}}_N^{\,2}(u,o)=\sum_{i=1}^n{\rm Hess}^{N} {\color{black}{{\sf d}}}_N^{\,2}(\cdot,o)(\d u(e_i),\d u(e_i))+\d {\color{black}{{\sf d}}}_N^{\,2}(\cdot,o)(\tau(u)).
\end{align*}
Thus we obtain
\begin{align*}
\Delta_V {\color{black}{{\sf d}}}_N^{\,2}(u,o)&=\sum_{i=1}^n{\rm Hess}^{N} {\color{black}{{\sf d}}}_N^{\,2}(\cdot,o)(\d u(e_i),\d u(e_i))+\d {\color{black}{{\sf d}}}_N^{\,2}(\cdot,o)(\tau_V(u))\\
&=\sum_{i=1}^n{\rm Hess}^{N} {\color{black}{{\sf d}}}_N^{\,2}(\cdot,o)(\d u(e_i),\d u(e_i))\\
&\geq 2\sum_{i=1}^n\langle \d u(e_i),\d u(e_i)\rangle=2|\d u|^2.
\end{align*}
In the last inequality above, we use the Greene-Wu Hessian comparison theorem under
${\rm Sect}_N\leq0$ (see \cite[Theorem~A]{GreeneWu}, \cite[p.~227]{SiuYau}, \cite[Chapter IV, Lemma~2.9]{Sakai}),
where we use the completeness and simple connectedness of $(N,h)$.
Note that ${\rm Hess}^{N}{\color{black}{{\sf d}}}_N(\cdot,o)^2(e_{\alpha}',e_{\beta}')=2\delta_{\alpha\beta}$ for $o\in N=\R^{\ell}$ and $\alpha,\beta=1,2,\cdots,\ell$ with $e_{\alpha}'=\left(\partial/\partial y^{\alpha} \right)_y$, $y\in\R^{\ell}$.
\end{proof}
\begin{remark}\label{rem:simplyconnected}
{\rm In the assertion of Lemma~\ref{lem:LapalcianDistaHarm}, we can relax the simple connectedness of $(N,h)$.
 Precisely to say, if ${\rm Sect}_N\leq0$ and ${\rm Im}(u)\cap {\rm Cut}(o)=\emptyset$, then
$\Delta_V{\color{black}{{\sf d}}}_N^2(u,o)\geq 2|\d u|^2$. This is a consequence of Greene-Wu's Hessian comparison theorem (e.g.~\cite[Chapter IV, Lemma~2.9]{Sakai}).
}
\end{remark}

\section{Polynomial growth of differential map for $V$-harmonic maps}\label{sec:quadraticgrowth}
In this section, we show that for each $i=1,2,3$, if a smooth $V$-harmonic map $u:M\to N$ has a growth condition 
{\bf (G{\boldmath$i$})}
under 
{\bf (B3)} and ${\rm Ric}_V^m\geq0$ with $m\in[-\infty,\,0\,]\,\cup\,[\,n,\,+\infty\,]$, then 
the differential map $\d u:TM\to TN$ also has the same growth {\bf (G{\boldmath$i$})}. 
For this, we need the following lemma. 

\begin{lem}\label{lem:lineargrowth}
Assume ${\rm Ric}_V^m\geq0$ with $m\in[-\infty,\,0\,]\,\cup\,[\,n,\,+\infty\,]$. 
Suppose that $u:M\to N$ is a smooth $V$-harmonic map  and {\bf (B3)}. 
Then for each $p\in M$, there exist $C=C(p,D)>0$ and $A=A(p,D)>0$ such that  
\begin{align*}
\sup_{x\in B_a(p)}|\d u|^2(x)\leq C(m_u(2a)+1)^2\qquad \text{ for }\qquad a>A. 
\end{align*}
\end{lem}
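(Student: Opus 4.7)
The plan is to adapt the Cheng--Yau maximum-principle gradient estimate for harmonic maps (the argument behind Theorem~\ref{thm:Cheng}) to the $V$-harmonic setting, combining the Bochner inequality from Corollary~\ref{cor:BochnerIdentity}, the Hessian comparison from Lemma~\ref{lem:LapalcianDistaHarm} (both implicit in the Hadamard-type assumption on $N$ operative in this section), and the Laplacian comparison {\bf (B3)}. Setting $b:=2(m_u(2a)+1)$, we have $\beta(x):=b^2-{\sf d}_N^2(u(x),o)\geq \tfrac34 b^2$ uniformly on $B_{2a}(p)$. Picking a smooth cutoff $\eta\colon[\,0,+\infty\,[\,\to[\,0,1\,]$ with $\eta\equiv 1$ on $[\,0,a\,]$, $\supp\eta\subset[\,0,2a\,]$, $|\eta'|\leq C/a$, $|\eta''|\leq C/a^2$, I would work with the Cheng-type auxiliary function
\[
F(x):=\eta(r_p(x))^2\,\frac{|\d u|^2(x)}{\beta(x)^2},
\]
which vanishes on $\partial B_{2a}(p)$ and hence attains its maximum at some interior $x_0$.

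At $x_0$ one has $\nabla F=0$ and $\Delta_V F\leq 0$. Expanding $\Delta_V F$ by the product rule, substituting the inequalities
\[
\Delta_V|\d u|^2\geq 0\ \ (\text{Corollary~\ref{cor:BochnerIdentity}}),\qquad \Delta_V{\sf d}_N^2(u,o)\geq 2|\d u|^2\ \ (\text{Lemma~\ref{lem:LapalcianDistaHarm}}),
\]
and then using $\nabla F(x_0)=0$ to trade the gradient cross-terms $\langle\nabla\eta^2,\nabla|\d u|^2\rangle$ and $\langle\nabla\eta^2,\nabla\beta^{-2}\rangle$ for pointwise algebraic expressions in the cutoff data, the standard Cheng--Yau manipulation (with one Cauchy--Schwarz absorption) yields a master inequality of the schematic form
\[
\frac{c\,|\d u|^4}{\beta^3}\,\eta^2\leq \frac{C\,|\d u|^2}{\beta^2}\Bigl(|\Delta_V(\eta^2)|+\frac{|\nabla\eta^2|^2}{\eta^2}\Bigr)\quad\text{at }x_0,
\]
equivalently $\beta(x_0)\,F(x_0)\leq C\bigl(|\Delta_V(\eta^2)|+|\nabla\eta^2|^2/\eta^2\bigr)(x_0)$.

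Invoking {\bf (B3)} together with the chain rule $\Delta_V\eta^2(r_p)=(\eta^2)''(r_p)+(\eta^2)'(r_p)\Delta_Vr_p$, and noting that on $\supp\eta'$ we have $r_p\in[\,a,2a\,]$, one obtains the cutoff bounds
\[
|\Delta_V\eta^2(r_p)|\leq \frac{C}{a^2}+\frac{C}{a}\cdot\frac{D(1+r_p^2)}{r_p}\leq C(1+D),\qquad \frac{|\nabla\eta^2|^2}{\eta^2}\leq \frac{C}{a^2},
\]
for all sufficiently large $a$. Together with $\beta(x_0)\geq \tfrac34 b^2$, this yields $F(x_0)\leq C(1+D)/b^2$, whence
\[
\sup_{B_a(p)}|\d u|^2\leq b^4\,F(x_0)\leq C(1+D)\,b^2=C(p,D)\bigl(m_u(2a)+1\bigr)^2,
\]
which is the desired estimate.

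The main obstacle is the quadratic growth of $\Delta_Vr_p$ permitted by {\bf (B3)}, which is strictly weaker than the classical linear Laplacian comparison: in Cheng's original setting $\eta'\Delta r_p=O(1/a^2)$ produces the sharper bound $|\d u|^2\lesssim b^2/a^2$, whereas here the $r_p^2$ term in {\bf (B3)} forces $\eta'\Delta_Vr_p=O(1)$, destroying the $a$-decay and leaving only the $O(b^2)$ conclusion asserted by the lemma (which turns out to be exactly what is needed for the sublinear-growth Liouville application). A secondary technical matter, the non-differentiability of $r_p$ across $\cut(p)$, is handled by the standard Calabi trick of perturbing the basepoint slightly along a minimizing geodesic to $x_0$, applying the computation to the resulting smooth radial function, and passing to the limit in the inequalities.
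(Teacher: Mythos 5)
Your proposal is correct and follows essentially the same route as the paper: a Cheng--Yau maximum-principle argument applied to a cutoff times $|\d u|^2/(b^2-{\sf d}_N^2(u,o))^k$ with $b$ proportional to $m_u(2a)+1$, using the Bochner inequality, the target Hessian comparison $\Delta_V{\sf d}_N^2(u,o)\geq 2|\d u|^2$, condition {\bf (B3)} to control the cutoff's $V$-Laplacian, and Calabi's trick at the cut locus (the paper takes $k=1$ where you take $k=2$, a cosmetic difference). The only nitpick is that {\bf (B3)} gives a one-sided bound on $\Delta_Vr_p$ rather than on $|\Delta_V\eta^2|$, but since $\eta'\leq 0$ this is exactly the side needed to bound $\Delta_V\eta^2$ from below, which is all the argument uses.
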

Here $m_u(2a):=\sup_{B_{2a}(p)}{\color{black}{{\sf d}}}_N(u,o)$. In particular, for each $i=1,2,3$ 
if {\bf (G{\boldmath$i$})} holds for $u$, then it holds for $|\d u|$.
\begin{proof}[\bf Proof]
Let $\tilde{\varphi}(r)$ be a $C^2$-function defined on $[\,0,\,+\infty\,[$ such that $\tilde{\varphi}(r)>0$ for $r\in[0,2[$ and 
\begin{align*}
\tilde{\varphi}(r):=\left\{\begin{array}{cl}1 & \quad\text{ if }\quad r\in[\,0,\,1\,], \\0 & \quad \text{ if }\quad r\in[\,2,+\infty\,[\end{array}\right.
\end{align*}
with 
\begin{align*}
0\geq\frac{\tilde{\varphi}'(r)}{\sqrt{\tilde{\varphi}(r)}}\geq -C_1
\end{align*}
and
\begin{align*}
\tilde{\varphi}''(r)\geq -C_2
\end{align*}
for some constants $C_1,C_2>0$ (see \cite[proof of Theorem~1.2]{LiYau} or 
\cite[proof of Theorem~1.5]{Xdli:Liouville}). We set 
\begin{align*}
\varphi(x):=\tilde{\varphi}\left(\frac{r_p(x)}{a} \right).
\end{align*}
We then see that on $B_{2a}(p)\setminus {\rm Cut}(p)$ 
\begin{align}
|\nabla \varphi|\leq\frac{C_1}{a}\quad \text{ and }\quad\frac{|\nabla \varphi|^2}{\varphi}\leq \frac{C_1^2}{a^2},\label{eq:gradestvarphi}
\end{align}
and
\begin{align}
\Delta_V\varphi=\frac{\tilde{\varphi}'(r_p/a)\Delta_Vr_p}{a}+\frac{\tilde{\varphi}''(r_p/a)|\nabla r_p|^2}{a^2}\geq-\frac{C_1D(1+2a^2)+C_2}{a^2}\label{eq:Laplacianvarphi}
\end{align}
on $B_{2a}(p)$. Here we use {\bf (B3)}. 
Set $b:=\sqrt{5}(m_u(2a)+1)$, $F(x)=\frac{|\d u|^2(x)}{b^2-{\color{black}{{\sf d}}}_N^{\,2}(u(x),o)}$ and $G:=\varphi F$. 
Note that $F\geq0$ on $B_{2a}(p)$ because $b^2-{\color{black}{{\sf d}}}_N^{\,2}(u(x),o)\geq 5>0$ on $B_{2a}(p)$. 
Moreover, 
\begin{align}
\frac{{\color{black}{{\sf d}}}_N^{\,2}(u,o)}{b^2-{\color{black}{{\sf d}}}_N^{\,2}(u,o)}\leq \frac{m_u(2a)^2}{b^2-m_u(2a)^2}\leq \frac{1}{4}\quad\text{ on }\quad B_{2a}(p).\label{eq:FracEst}
\end{align} 

Then, $G$ has a maximum at $\bar{x}\in B_{2a}(p)$. 
At this point $r_p$ may not be twice 
differentiable. We will remedy this by the Calabi's argument (see \cite{Calabi:strongmax}): 
Let $\gamma:[0,r_p(\bar{x})]\to M$ be the minimal geodesic joining  $\gamma_0=p$ and 
$\gamma_{r_p(\bar{x})}=\bar{x}$. Let $\eps>0$ be small. Along $\gamma$, $\gamma_{\eps}$ is not conjugate to $\bar{x}$. So there exists a geodesic cone $\mathscr{C}$ with vertex $\gamma_{\eps}$ and 
contains a neighborhood of $\bar{x}$ such that $x\mapsto r_{\gamma_{\eps}}(x)$ is smooth inside 
$\mathscr{C}$. Let $\tilde{r}_{\eps}(x):=r_{\gamma_{\eps}}(x)+\eps$. Then triangle 
inequality states that 
\begin{align}
r_p(x)\leq\tilde{r}_{\eps}(x)\quad\text{ for }\quad x\in\mathscr{C}, \qquad r_p(\bar{x})=\tilde{r}_{\eps}(\bar{x}).
\label{eq:Calabi}
\end{align}
Then $\tilde{G}_{\eps}:=\tilde{\varphi}\left(\frac{\tilde{r}_{\eps}}{a} \right)F$ is smooth near $\bar{x}$ and also attains the same maximum value at $\bar{x}$. Letting $\eps\to0$, we may assume without loss of generality that 
$r_p$ is already smooth near $\bar{x}$.

If $F(\bar{x})=0$, we have $|\d u|\equiv0$ on $B_{2a}(p)$. Then we have the conclusion. 
So we may assume $F(\bar{x})>0$, hence $|\d u|(\bar{x})>0$. 
At $\bar{x}$, we have 
\begin{align*}
0\geq \Delta_V(\varphi F)\quad\text{ and }\quad \nabla(\varphi F)=0.
\end{align*}
The latter one is equivalent to $0=\frac{\nabla\varphi}{\varphi}+\frac{\nabla F}{F}$ at $\bar{x}$, i.e., 
\begin{align}
\frac{\nabla {\color{black}{{\sf d}}}_N^{\,2}(u,o)}{b^2-{\color{black}{{\sf d}}}_N^{\,2}(u,o)}=-\frac{\nabla\varphi}{\varphi}-\frac{\nabla |\d u|^2}{|\d u|^2}\label{eq:Plugg}
\end{align}
at $\bar{x}$. 
These imply that at $\bar{x}$ with \eqref{eq:Laplacianvarphi}
\begin{align*}
0&\geq (\Delta_V\varphi)F+2\langle\nabla\varphi,\nabla F\rangle+\varphi(\Delta_VF)\\
&\geq -\frac{C_1D(1+2a^2)+C_2}{a^2}F+2\langle\nabla\varphi,\nabla(\varphi F)\rangle-2\frac{|\nabla \varphi|^2}{\varphi}F\\
&\hspace{1cm}+\varphi\left(
\frac{\Delta_V{\color{black}{{\sf d}}}_N^{\,2}(u,o)}{b^2-{\color{black}{{\sf d}}}_N^{\,2}(u,o)}
+2\left\langle \nabla|\d u|^2, \nabla\frac{1}{b^2-{\color{black}{{\sf d}}}_N^{\,2}(u,o)}\right\rangle
+|\d u|^2\Delta_V\frac{1}{b^2-{\color{black}{{\sf d}}}_N^{\,2}(u,o)} 
\right)\\
&= -\frac{C_1D(1+2a^2)+C_2}{a^2}F-2\frac{|\nabla \varphi|^2}{\varphi}F\\
&\hspace{2cm}+\varphi \left\{
\frac{\Delta_V{\color{black}{{\sf d}}}_N^{\,2}(u,o)}{b^2-{\color{black}{{\sf d}}}_N^{\,2}(u,o)}
+2\left\langle\nabla|\d u|^2, \frac{\nabla {\color{black}{{\sf d}}}_N^{\,2}(u,o)}{(b^2-{\color{black}{{\sf d}}}_N^{\,2}(u,o))^2}\right\rangle \right.
\\
&\hspace{4cm}\left.
+|\d u|^2\left(\frac{2|\nabla {\color{black}{{\sf d}}}_N^{\,2}(u,o)|^2}{(b^2-{\color{black}{{\sf d}}}_N^{\,2}(u,o))^3}+\frac{\Delta_V{\color{black}{{\sf d}}}_N^{\,2}(u,o)}{(b^2-{\color{black}{{\sf d}}}_N^{\,2}(u,o))^2} \right)
\right\}.
\end{align*}
Substituting \eqref{eq:Plugg} into the the second term in the brace above, 
the last equals 
\begin{align*}
\hspace{-3cm}-&\frac{C_1D(1+2a^2)+C_2}{a^2}F-2\frac{|\nabla \varphi|^2}{\varphi}F\\
&\hspace{1cm}+\varphi \frac{\Delta_V{\color{black}{{\sf d}}}_N^{\,2}(u,o)}{b^2-{\color{black}{{\sf d}}}_N^{\,2}(u,o)}
-\frac{2\varphi}{b^2-{\color{black}{{\sf d}}}_N^{\,2}(u,o)}\left(\frac{|\nabla |\d u|^2|^2}{|\d u|^2}
-\frac{\langle\nabla|\d u|^2,\nabla\varphi\rangle}{\varphi}\right)\\
&\hspace{2cm}+\varphi\frac{|\d u|^2}{b^2-{\color{black}{{\sf d}}}_N^{\,2}(u,o)}\left(\frac{2|\nabla {\color{black}{{\sf d}}}_N^{\,2}(u,o)|^2}{(b^2-{\color{black}{{\sf d}}}_N^{\,2}(u,o))^2}+\frac{\Delta_V {\color{black}{{\sf d}}}_N^{\,2}(u,o)}{b^2-{\color{black}{{\sf d}}}_N^{\,2}(u,o)} \right).
\end{align*}
By Lemma~\ref{lem:LapalcianDistaHarm}, we know $\Delta_V{\color{black}{{\sf d}}}_N^2(u,o)\geq 2|\d u|^2\geq0$, the last 
equation is greater than 
\begin{align*}
&\hspace{-3cm}-\frac{C_1D(1+2a^2)+C_2}{a^2}F-2\frac{|\nabla \varphi|^2}{\varphi}F\\
&\hspace{-1.6cm}-2\varphi F\frac{|\nabla|\d u|^2|^2}{|\d u|^4}-2\varphi F\left\langle\frac{\nabla|\d u|^2}{|\d u|^2},\frac{\nabla\varphi}{\varphi} \right\rangle\\
&\hspace{-0.6cm}+\varphi F\left( \frac{8{\color{black}{{\sf d}}}_N^{\,2}(u,o)|\d u|^2}{(b^2-{\color{black}{{\sf d}}}_N^{\,2}(u,o))^2}+\frac{2|\d u|^2}{b^2-{\color{black}{{\sf d}}}_N^{\,2}(u,o)}\right).
\end{align*}
Multiplying $\varphi(\bar{x})$ in both sides, we have 
\begin{align*}
0&\geq -\frac{C_1D(1+2a^2)+C_2}{a^2}\varphi F-2\frac{|\nabla \varphi|^2}{\varphi}\varphi F\\
&\hspace{1cm}-2\varphi\cdot\varphi F\cdot\frac{|\nabla|\d u|^2|^2}{|\d u|^4}-2\varphi F
\left\langle\frac{\nabla|\d u|^2}{|\d u|^2},\nabla \varphi \right\rangle\\
&\hspace{2cm}+\varphi\cdot\varphi F\left(\frac{8{\color{black}{{\sf d}}}_N^{\,2}(u,o)}{b^2-{\color{black}{{\sf d}}}_N^{\,2}(u,o)}F+2F \right)\\
&\geq -\frac{C_1D(1+2a^2)+C_2}{a^2}\varphi F-2\frac{|\nabla \varphi|^2}{\varphi}\varphi F\\
&\hspace{1cm}-2\varphi\cdot\varphi F\cdot\frac{|\nabla|\d u|^2|^2}{|\d u|^4}-2\varphi F
\frac{|\nabla|\d u|^2|}{|\d u|^2}|\nabla \varphi| \\
&\hspace{2cm}+\varphi\cdot\varphi F\left(\frac{8{\color{black}{{\sf d}}}_N^{\,2}(u,o)}{b^2-{\color{black}{{\sf d}}}_N^{\,2}(u,o)}F+2F \right).
\end{align*}
Applying \eqref{eq:gradestvarphi} and \eqref{eq:Plugg}, we have 
\begin{align*}
0&\geq -\frac{C_1D(1+2a^2)+C_2}{a^2}\varphi F-2\frac{C_1^2}{a^2}\varphi F\\
&\hspace{1cm}-2\varphi^2 F\left(\frac{4{\color{black}{{\sf d}}}_N^{\,2}(u,o)|\d u|^2}{b^2-{\color{black}{{\sf d}}}_N^{\,2}(u,o))^2}+\frac{4{\color{black}{{\sf d}}}_N(u,o)|\d u||\nabla\varphi|}{(b^2-{\color{black}{{\sf d}}}_N^{\,2}(u,o))\varphi}+\frac{|\nabla\varphi|^2}{\varphi^2} \right)\\
&\hspace{2cm}-2\varphi F\left(\frac{|\nabla\varphi|^2}{\varphi}+\frac{2{\color{black}{{\sf d}}}_N(u,o)|\d u|}{b^2-{\color{black}{{\sf d}}}_N^{\,2}(u,o)} \right)\\
&\hspace{3cm}+\varphi F\left(\frac{8{\color{black}{{\sf d}}}_N^{\,2}(u,o)}{b^2-{\color{black}{{\sf d}}}_N^{\,2}(u,o)}\varphi F+2\varphi F \right)\\
&\geq -\frac{C_1D(1+2a^2)+C_2+2C_1^2}{a^2}\varphi F\\
&\hspace{1cm}-2\varphi^2 F\left(4\frac{{\color{black}{{\sf d}}}_N^{\,2}(u,o)}{b^2-{\color{black}{{\sf d}}}_N^{\,2}(u,o)}F+4\sqrt{\frac{{\color{black}{{\sf d}}}_N^{\,2}(u,o)}{b^2-{\color{black}{{\sf d}}}_N^{\,2}(u,o)}}\sqrt{F}\frac{|\nabla\varphi|}{\varphi}+\frac{|\nabla\varphi|^2}{\varphi^2} \right)\\
&\hspace{2cm}-2\varphi F\cdot\frac{|\nabla\varphi|^2}{\varphi}
-4\varphi F|\nabla\varphi|\frac{{\color{black}{{\sf d}}}_N(u,o)|\d u|}{b^2-{\color{black}{{\sf d}}}_N^{\,2}(u,o)}\\
&\hspace{3cm}+\varphi F\left(\frac{8{\color{black}{{\sf d}}}_N^2{\,}(u,o)}{b^2-{\color{black}{{\sf d}}}_N^{\,2}(u,o)}\varphi F+2\varphi F \right).
\end{align*}
Then 
\begin{align*}
0&\geq -\frac{C_1D(1+2a^2)+C_2+2C_1^2}{a^2}\varphi F\\
&\hspace{1cm}-8(\varphi F)^2\frac{{\color{black}{{\sf d}}}_N^{\,2}(u,o)}{b^2-{\color{black}{{\sf d}}}_N^{\,2}(u,o)}-8(\varphi F)\sqrt{\frac{{\color{black}{{\sf d}}}_N^{\,2}(u,o)}{b^2-{\color{black}{{\sf d}}}_N^{\,2}(u,o)}}\sqrt{\varphi F}\frac{|\nabla\varphi|}{\sqrt{\varphi}}-4(\varphi F)\frac{|\nabla\varphi|^2}{\varphi}\\
&\hspace{2cm}-4(\varphi F)\cdot\frac{|\nabla\varphi|}{\sqrt{\varphi}}\sqrt{\frac{{\color{black}{{\sf d}}}_N^{\,2}(u,o)}{b^2-{\color{black}{{\sf d}}}_N^{\,2}(u,o)}}\sqrt{\varphi F}+\varphi F\left(8\frac{{\color{black}{{\sf d}}}_N^{\,2}(u,o)}{b^2-{\color{black}{{\sf d}}}_N^{\,2}(u,o)}\varphi F+2\varphi F \right)\\
&=-\frac{C_1D(1+2a^2)+C_2+2C_1^2}{a^2}\varphi F\\
&\hspace{1cm}-4(\varphi F)\frac{|\nabla\varphi|^2}{\varphi}-12(\varphi F)\cdot\frac{|\nabla\varphi|}{\sqrt{\varphi}}\sqrt{\frac{{\color{black}{{\sf d}}}_N^{\,2}(u,o)}{b^2-{\color{black}{{\sf d}}}_N^{\,2}(u,o)}}\sqrt{\varphi F}+2(\varphi F)^2.
\end{align*}
Applying \eqref{eq:gradestvarphi} and \eqref{eq:FracEst}, and   
dividing by $G(\bar{x})=\varphi(\bar{x})F(\bar{x})>0$, we have at $\bar{x}$
\begin{align*}
0&\geq -\frac{C_1D(1+2a^2)+C_2+6C_1^2}{a^2}-\frac{6C_1}{a}\sqrt{\varphi F}+
2\varphi F\\
&\geq -\frac{C_1D(1+2a^2)+C_2+6C_1^2}{a^2}-\frac{6C_1}{a}\sqrt{\varphi F}+
\varphi F.
\end{align*}
This implies that 
\begin{align*}
\sqrt{G(\bar{x})}\leq 
\sqrt{\frac{12C_1^2}{a^2}+\frac{C_1D(1+2a^2)+C_2+6C_1^2}{a^2}}
=O(1).
\end{align*}
Therefore, 
\begin{align*}
\frac{1}{b^2}\sup_{B_a(p)}|\d u|^2\leq\sup_{B_a(p)}\frac{\varphi|\d u|^2}{b^2-{\color{black}{{\sf d}}}_N^{\,2}(u,o)}\leq 
\sup_{B_{2a}(p)}\frac{\varphi|\d u|^2}{b^2-{\color{black}{{\sf d}}}_N^{\,2}(u,o)}=G(\bar{x})=O(1),
\end{align*}
hence 
\begin{align*}
\sup_{B_a(p)}|\d u|^2\leq b^2O(1)=5(m_u(2a)+1)^2O(1).
\end{align*}
\end{proof}

Finally, we show the following lemma for the upper bound. 

\begin{lem}\label{lem:squaredistance*}
We have the following:
\begin{enumerate}
\item Suppose {\bf (B1)}.
Then, there exists $D>0$ such that for any relatively compact open set $G$
\begin{align}
{\E}_x\left[r_p^2(X_{t\land\tau_G})\right]
\leq r_p^2(x)+2(1+D)t\label{eq:quadratic1}
\end{align} 
holds for all~$x\in M$. 
\item Suppose {\bf (B2)}.
Then, there exists $D>0$ such that for any relatively compact open set $G$ 
\begin{align}
{\E}_x\left[r_p(X_{t\land\tau_G})-\log(1+r_p(X_{t\land\tau_G}))\right]
\leq r_p(x)-\log(1+r_p(x))+(1+D)t\label{eq:quadratic3}
\end{align} 
holds for all~$x\in M$. 
\item Suppose {\bf (B3)}.
Then, there exists $D>0$ such that for any relatively compact open set $G$
\begin{align}
{\E}_x\left[\log(1+r_p^2(X_{t\land\tau_G}))\right]
\leq \log(1+r_p^2(x))+2(1+D)t\label{eq:quadratic4}
\end{align} 
holds for all~$x\in M$. 
\end{enumerate}
\end{lem}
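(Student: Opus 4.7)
The plan is to apply It\^o's formula to $F_i(r_p(X_t))$ for carefully chosen $C^2$-functions
\[
F_1(r):=r^2,\qquad F_2(r):=r-\log(1+r),\qquad F_3(r):=\log(1+r^2),
\]
and then take expectation after stopping at the first exit time $\tau_G$ of the relatively compact open set $G$. Each $F_i$ is designed so that: (a) $F_i'\geq 0$ on $[\,0,+\infty\,[$, which makes the local time term $-\int_0^{\cdot}F_i'(r_p(X_s))\,\d L_s$ non-positive and hence discardable; (b) $F_i''$ is bounded from above; and (c) under condition {\bf (B$i$)}, the product $F_i'(r_p)\,\Delta_V r_p$ is bounded by a universal constant. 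Since Kendall's decomposition (Theorem~\ref{thm:KendallDecom}) already expresses $r_p(X_t)$ as a continuous semimartingale, the one-dimensional It\^o formula for $C^2$-functions applies directly, despite the non-smoothness of $r_p$ on $\mathrm{Cut}(p)$.

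Explicitly, $F_1'(r)=2r$, $F_1''(r)=2$, so {\bf (B1)} yields $F_1'(r_p)\,\Delta_V r_p=2(r_p\,\Delta_V r_p)\leq 2D$. Next, $F_2'(r)=r/(1+r)\in[\,0,\,1\,[$ and $F_2''(r)=(1+r)^{-2}\leq 1$, so {\bf (B2)} yields
\[
F_2'(r_p)\,\Delta_V r_p=\frac{r_p\,\Delta_V r_p}{1+r_p}\leq D.
\]
Finally, $F_3'(r)=2r/(1+r^2)\in[\,0,\,1\,]$ and $F_3''(r)=2(1-r^2)/(1+r^2)^2\leq 2$, so {\bf (B3)} yields $F_3'(r_p)\,\Delta_V r_p=2(r_p\,\Delta_V r_p)/(1+r_p^2)\leq 2D$. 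Noting that the quadratic variation of the martingale part of $r_p(X_t)$ equals $2t$, It\^o's formula applied to \eqref{eq:Kendall} gives
\begin{align*}
F_i(r_p(X_t))-F_i(r_p(x))&=\sqrt{2}\int_0^t F_i'(r_p(X_s))\,\d\beta_s+\int_0^t F_i''(r_p(X_s))\,\d s\\
&\quad+\int_0^t F_i'(r_p(X_s))\,\Delta_V r_p(X_s)\1_{M\setminus\mathrm{Cut}(p)}(X_s)\,\d s\\
&\quad-\int_0^t F_i'(r_p(X_s))\,\d L_s
\end{align*}
on $[\,0,\,\zeta\,[$.

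Replacing $t$ by $t\wedge\tau_G$, the stochastic integral becomes a genuine martingale because $r_p$, and hence $F_i'(r_p)$, is bounded on $\overline{G}$; the local time term is non-positive by (a); the $F_i''$-integral contributes at most $2t$, $t$, or $2t$ by (b); and the $\Delta_V r_p$-integral contributes at most $2Dt$, $Dt$, or $2Dt$ by (c). Summing, the right-hand side is controlled by $F_i(r_p(x))$ plus $2(1+D)t$, $(1+D)t$, and $2(1+D)t$, respectively, matching the stated constants. Taking expectation yields the three claimed inequalities. I do not anticipate any real obstacle: the only point requiring attention is the justification that the stochastic integral is a true martingale after stopping at $\tau_G$, which is immediate from the relative compactness of $G$ and continuity of $F_i'\circ r_p$.
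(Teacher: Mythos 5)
Your proposal is correct and follows essentially the same route as the paper: the paper's proof writes out exactly the three It\^o expansions of $r_p^2(X_t)$, $r_p(X_t)-\log(1+r_p(X_t))$ and $\log(1+r_p^2(X_t))$ from Kendall's decomposition and then bounds the drift, local-time and $F_i''$ terms as you do. The only (immaterial) quibble is under {\bf (B1)}: since $r_p\Delta_V r_p\leq D(1+r_p^{0})=2D$, your bound $F_1'(r_p)\Delta_V r_p\leq 2D$ should read $\leq 4D$, but this is absorbed by the ``there exists $D>0$'' in the statement.
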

\begin{proof}[\bf Proof]

By It\^o's formula, we have
\begin{align*}
r_p^2(X_t)&=r_p^2(X_0)+2\sqrt{2}\int_0^tr_p(X_s)\d\beta_s+
2t\\
&\hspace{1cm}+2\int_0^t r_p(X_s)\Delta_Vr_p(X_s)\1_{M\setminus {\rm Cut}(p)}(X_s)\d s-2\int_0^t r_p(X_s)\d L_s,\\
r_p(X_t)-\log(1+r_p(X_t))&=r_p(X_0)-\log(1+r_p(X_0))\\
&\hspace{0.5cm}+\sqrt{2}\int_0^t\frac{r_p(X_s)\d \beta_s}{1+r_p(X_s)}+\int_0^t
\frac{\d s}{(1+r_p(X_s))^2}\\
&\hspace{1cm}+\int_0^t\frac{r_p(X_s)\Delta_Vr_p(X_s)}{1+r_p(X_s)}
\1_{M\setminus {\rm Cut}(p)}(X_s)\d s-\int_0^t\frac{r_p(X_s)\d L_s}{1+r_p(X_s)},\\
\log(1+r_p^2(X_t))=\log&(1+r_p^2(X_0))+2\sqrt{2}\int_0^t\frac{r_p(X_s)}{1+r_p^2(X_s)}\d \beta_s\\
&\hspace{-0.4cm}+2\int_0^t
\frac{1-r_p^2(X_s)}{(1+r_p^2(X_s))^2}\d s
+2\int_0^t\frac{r_p(X_s)\Delta_Vr_p(X_s)}{1+r_p^2(X_s)}
\1_{M\setminus {\rm Cut}(p)}(X_s)\d s\\
&\hspace{2cm}-\int_0^t\frac{2r_p(X_s)}{1+r_p^2(X_s)}\d L_s,\\
\end{align*}
$t\in[\,0,\,+\infty\,[$, 
${\P}_x$-a.s.~for $x\in M$.
From these formulae,  
we can obtain the conclusion. 
\end{proof}

\section{Lower bound}
Let $(X_t)_{t\geq0}$ be the $\Delta_V$-diffusion process on $M$.
In this section, we show a lower bound of ${\E}_x[{\color{black}{{\sf d}}}_N^{\,2}(u(X_{t\land\tau_{G_n}}),o)]$
for a reference point $o\in N$ and $V$-harmonic map $u:M\to N$ with the increasing sequence $\{G_n\}$ of relatively compact open sets satisfying $\overline{G_n}\subset G_{n+1}\subset M$ for $n\in\N$ and 
$M=\bigcup_{n=1}^{\infty}G_n$. 
The proof of the
lower estimate ${\E}_x[{\color{black}{{\sf d}}}_N(u(X_t),o)^2]\geq {\color{black}{{\sf d}}}_N(u(x),o)^2+|{\rm d}u|^2(x)t$ for
harmonic map $u:M\to N$ in terms of Brownian motion $(X_t)_{t\geq0}$ on $M$ under ${\Ric}_g\geq0$ in \cite{Staff:Liouville} is incomplete, because of the lack of 
the gradient estimate for harmonic map $u$ like Lemma~\ref{lem:lineargrowth}. In \cite{Staff:Liouville}, 
such a gradient estimate for harmonic function $u:M\to\R$ 
due to Li-Yau~\cite{LiYau} was 
only used.

\begin{lem}\label{lem:subharmograd}

Assume that ${\Ric}_V^m\geq0$ with $m\in\,[\,-\infty,\,0\,]\,\cup\,[\,n,+\infty\,]$ and
${\rm Sect}_N\leq0$. 
Let $u:M\to N$ be a $V$-harmonic map.
Suppose {\bf (G1)} for $u$  and {\bf (B3)}. 
Then, 
\begin{align}
|\d u|^2(x)\leq {\E}_x\left[|\d u|^2(X_t)\right]\label{eq:sumartingale}
\end{align}
holds for all $x\in M$.

\end{lem}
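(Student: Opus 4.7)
The plan is to exploit the $V$-subharmonicity of $|\d u|^2$ and then pass from a localized submartingale bound to \eqref{eq:sumartingale} using conservativeness together with uniform integrability of a suitable family. First, since ${\rm Ric}_V^m\geq 0$ for the specified range of $m$, ${\rm Sect}_N\leq 0$, and $u$ is $V$-harmonic, Corollary~\ref{cor:BochnerIdentity} gives $\Delta_V|\d u|^2\geq 0$, so $|\d u|^2\in C^\infty(M)$ is $V$-subharmonic. Applying It\^o's formula \eqref{eq:Ito} with the standard cut-off procedure described in Section~\ref{sec:diffusion}, the process $t\mapsto |\d u|^2(X_{t\wedge \tau_{G_n}})$ is a $\P_x$-submartingale for any exhaustion $\{G_n\}$ by relatively compact open sets, whence
\[
\E_x\!\left[|\d u|^2(X_{t\wedge \tau_{G_n}})\right]\geq |\d u|^2(x) \qquad \text{for every } n\in\N \text{ and } x\in M.
\]
Under {\bf (B3)}, Corollary~\ref{cor:conservative} furnishes the conservativeness of ${\bf X}^V$, hence $\tau_{G_n}\uparrow +\infty$ $\P_x$-a.s.; by continuity of $|\d u|^2$ we therefore obtain $|\d u|^2(X_{t\wedge\tau_{G_n}})\to |\d u|^2(X_t)$ $\P_x$-a.s.\ as $n\to\infty$.

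The remaining delicate point is to interchange limit and expectation, and this is precisely where {\bf (G1)} enters. From Lemma~\ref{lem:lineargrowth} combined with the sublinearity $m_u(a)=o(a)$, we get the global polynomial bound $|\d u|^2(y)\leq C(1+r_p^2(y))$ on $M$, and squaring, $|\d u|^4(y)\leq C'(1+r_p^4(y))$. The quartic radial moment estimate \eqref{eq:quaternic} of Lemma~\ref{lem:squaredistance} then gives
\[
\sup_n \E_x\!\left[|\d u|^4(X_{t\wedge \tau_{G_n}})\right]<+\infty,
\]
which is $L^2$-boundedness, hence uniform integrability, of the family $\{|\d u|^2(X_{t\wedge\tau_{G_n}})\}_n$. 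Together with the $\P_x$-a.s.\ convergence established above, this yields
\[
|\d u|^2(x)\leq \lim_{n\to\infty}\E_x\!\left[|\d u|^2(X_{t\wedge\tau_{G_n}})\right]=\E_x\!\left[|\d u|^2(X_t)\right],
\]
as desired.

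The main obstacle is precisely this last uniform-integrability step: the raw Bochner inequality only provides a \emph{local} submartingale bound, and the sublinear growth of $u$ alone is insufficient to control $\E_x[|\d u|^2(X_t)]$. What makes the argument go through is the two-step amplification in which (i) Lemma~\ref{lem:lineargrowth} transfers the growth condition {\bf (G1)} from $u$ to the energy density $|\d u|$, and (ii) Lemma~\ref{lem:squaredistance} supplies a quartic radial moment, yielding $L^2$-boundedness rather than merely $L^1$-boundedness. This is exactly the gap in \cite[\S 2]{Staff:Liouville}, where only a Li--Yau type gradient estimate for harmonic \emph{functions} was available, which is insufficient in the target-manifold setting; our Lemma~\ref{lem:lineargrowth} repairs this issue under the relaxed curvature condition ${\rm Ric}_V^m\geq 0$ for $m\in[-\infty,0]\cup[n,+\infty]$.
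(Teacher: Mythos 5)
Your proposal is correct and follows essentially the same route as the paper's proof: Bochner inequality (Corollary~\ref{cor:BochnerIdentity}) to get $V$-subharmonicity of $|\d u|^2$, the localized submartingale inequality via It\^o's formula, and then uniform integrability of $\{|\d u|^2(X_{t\wedge\tau_{G_n}})\}_n$ obtained by transferring {\bf (G1)} to $|\d u|$ through Lemma~\ref{lem:lineargrowth} and invoking the quartic moment bound \eqref{eq:quaternic}, combined with conservativeness to pass to the limit. The only cosmetic difference is that the paper splits the expectation over $\{r_p\leq A\}$ and $\{r_p>A\}$ rather than quoting a single global polynomial bound, which is equivalent.
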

\begin{proof}[\bf Proof]
Applying Corollary~\ref{cor:BochnerIdentity} to the $V$-harmonic map $u$, we have
\begin{align*}
\Delta_V |\d u |^2\geq0\quad\text{ on }\quad M.
\end{align*}
Then It\^o's formula \eqref{eq:Ito} tells us that there exists a local  martingale $m_t^{|\d u|^2}$
such that
\begin{align*}
|\d u|^2(X_t)-|\d u|^2(X_0)=m_t^{|\d u|^2}+\int_0^t\Delta_V |\d u|^2(X_s)\d s,\qquad t\in\,[\,0,\,+\infty\,[
\end{align*}
holds $\P_x$-a.s.~for all $x\in M$.
In fact the local ${\P}_x$-martingale $m_t^{|\d u|^2}$
 is given by
\begin{align*}
m_t^{|\d u|^2}:=\sqrt{2}\int_0^t\left\langle U_s^{-1}\nabla|\d u|^2(X_s),\d B_s\right\rangle.
\end{align*}
Let $G$ be a relatively compact open set. 
We see that 
\begin{align*}
|\d u|^2(X_{t\land\tau_G})-|\d u|^2(X_0)=m_{t\land\tau_G}^{|\d u|^2}+\int_0^{t\land\tau_G}\Delta_V |\d u|^2(X_s)\d s 
\end{align*}
holds $\P_x$-a.s.~for all $x\in M$. 
Since ${\E}_x[m_{t\land\tau_G}^{|\d u|^2}]=0$, we have 
\begin{align*}
|\d u|^2(x)\leq {\E}_x\left[|\d u|^2(X_{t\land\tau_G})\right]
\end{align*}
for $x\in M$. 
Now we consider an increasing sequence $\{G_n\}$ of relatively compact open sets satisfying $\overline{G_n}\subset G_{n+1}\subset M$ for $n\in\N$ and 
$M=\bigcup_{n=1}^{\infty}G_n$.
We prove that 
\begin{align}
\sup_{n\in\N}{\E}_x\left[|\d u|^4(X_{t\land\tau_{G_n}})\right]<\infty\label{eq:uniformbdd}
\end{align}
holds for all $x\in M$. 
By Lemma~\ref{lem:lineargrowth} and {\bf (G1)} for $u$, we have {\bf (G1)} for $|\d u|$, hence there exists $A=A(p)>0$ such that 
\begin{align*}
|\d u|^4(x)\leq r_p^4(x)\quad \text{ for }\quad r_p(x)>A.
\end{align*}
Then 
\begin{align*}
{\E}_x\left[|\d u|^4(X_{t\land\tau_{G_n}})\right]&={\E}_x\left[|\d u|^4(X_{t\land\tau_{G_n}}):r_p(X_{t\land\tau_{G_n}})\leq A\right]\\&\hspace{2cm}+{\E}_x\left[|\d u|^4(X_{t\land\tau_{G_n}}):r_p(X_{t\land\tau_{G_n}})>A\right]\\
&\leq \sup_{z\in B_A(p)}|\d u|^4(z)+{\E}_x[r_p^4(X_{t\land\tau_{G_n}})].
\end{align*}
By \eqref{eq:quaternic}
\begin{align*}
{\E}_x\left[|\d u|^4(X_{t\land\tau_{G_n}})\right]&\leq 
\sup_{z\in B_A(p)}|\d u|^4(z)
+r_p(x)^4+4(3+D)\int_0^t {\mathfrak D}(s)\d s \\
&\hspace{1cm}+4De^{4Dt}\int_0^t\left(r_p^4(x)+4(D+3)\int_0^s{\mathfrak D}(u)\d u\right)e^{-4Ds}\d s
\end{align*}
holds for all $x\in M$. 
This implies \eqref{eq:uniformbdd}. Therefore $\{|\d u|^2(X_{t\land\tau_{G_n}})\}_{n=1}^{\infty}$ is 
uniformly ${\P}_x$-integrable for all $x\in M$. Since 
\begin{align*}
{\P}_x\left(|\d u|(X_t)=\lim_{n\to\infty}|\d u|(X_{t\land\tau_{G_n}})\right)=1\quad\text{ for \ all }\quad x\in M
\end{align*}
by {\color{black}{Corollary}}~\ref{cor:conservative} and 
Theorem~\ref{thm:tight}, $\{|\d u|(X_{t\land\tau_{G_n}})\}_{n=1}^{\infty}$ $L^2({\P}_x)$-converges to $|\d u|(X_t)$ for all $x\in M$, hence 
\begin{align*}
|\d u|^2(x)\leq \lim_{n\to\infty}{\E}_x[|\d u|^2(X_{t\land\tau_{G_n}})]={\E}_x[|\d u|^2(X_t)]
\end{align*}
for all $x\in M$. 
\end{proof}

\begin{lem}\label{lem:growthEst}
Assume that ${\Ric}_V^m\geq0$ for $m\in\,[-\infty,\,0\,]\,\cup\,[\,n,+\infty\,]$ and
$(N,h)$ is a Cartan-Hadamard manifold.  Let $u:M\to N$ be a $V$-harmonic map. 
Suppose {\bf (G1)} for $u$ and {\bf (B3)}.
Then, for any increasing sequence $\{G_n\}$ of relatively compact open sets satisfying $\overline{G_n}\subset G_{n+1}\subset M$ for $n\in\N$ and 
$M=\bigcup_{n=1}^{\infty}G_n$, 
\begin{align}
2t
|\d u|^2(x)
\leq\sup_{n\in \N}{\E}_x
\left[{\color{black}{{\sf d}}}_N^{\,2}(u(X_{t\land\tau_{G_n}}),o)\right]
\label{eq:growthEst}
\end{align} 
for all $x\in M$. 
\end{lem}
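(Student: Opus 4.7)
The plan is to combine the submartingale property of $|\d u|^2$ established in Lemma~\ref{lem:subharmograd} with the Hessian comparison for ${\sf d}_N^2(\cdot,o)$ from Lemma~\ref{lem:LapalcianDistaHarm}, via It\^o's formula applied to the composition $F\circ u$ where $F(y):={\sf d}_N^{\,2}(y,o)$. Since $(N,h)$ is Cartan-Hadamard, $F$ is smooth on all of $N$, so $F\circ u\in C^2(M)$.

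First, I would apply It\^o's formula \eqref{eq:Ito} to $F\circ u$ and stop at $\tau_{G_n}$. Although $F\circ u$ is not compactly supported, it is bounded on $\overline{G_n}$, so the local martingale part $m_{t\wedge\tau_{G_n}}^{F\circ u}$ is a true martingale with mean zero, yielding
\begin{align*}
{\E}_x[{\sf d}_N^{\,2}(u(X_{t\land\tau_{G_n}}),o)] = {\sf d}_N^{\,2}(u(x),o)+{\E}_x\left[\int_0^{t\land\tau_{G_n}}\Delta_V(F\circ u)(X_s)\,\d s\right].
\end{align*}
Invoking Lemma~\ref{lem:LapalcianDistaHarm}, $\Delta_V(F\circ u)\geq 2|\d u|^2$, and dropping the nonnegative term ${\sf d}_N^{\,2}(u(x),o)$ gives
\begin{align*}
{\E}_x[{\sf d}_N^{\,2}(u(X_{t\land\tau_{G_n}}),o)] \geq 2\,{\E}_x\left[\int_0^{t\land\tau_{G_n}}|\d u|^2(X_s)\,\d s\right].
\end{align*}

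Next, I would pass to the supremum over $n$. By Fubini's theorem the right side equals $2\int_0^t {\E}_x[|\d u|^2(X_s)\1_{\{s<\tau_{G_n}\}}]\,\d s$. Under {\bf(B3)}, Corollary~\ref{cor:conservative} ensures that ${\bf X}^V$ is conservative, and by Theorem~\ref{thm:tight} one has $\tau_{G_n}\uparrow\zeta=+\infty$ $\P_x$-a.s.~so $\1_{\{s<\tau_{G_n}\}}\uparrow 1$. Monotone convergence then gives
\begin{align*}
\sup_{n\in\N}{\E}_x\left[\int_0^{t\land\tau_{G_n}}|\d u|^2(X_s)\,\d s\right]=\int_0^t{\E}_x[|\d u|^2(X_s)]\,\d s.
\end{align*}

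Finally, I would apply Lemma~\ref{lem:subharmograd}, which under the current hypotheses ${\rm Ric}_V^m\geq0$, {\bf(G1)} for $u$ and {\bf(B3)} gives ${\E}_x[|\d u|^2(X_s)]\geq |\d u|^2(x)$ for every $s\geq 0$. Integrating over $[0,t]$ yields the lower bound $t|\d u|^2(x)$, and combining everything produces
\begin{align*}
\sup_{n\in\N}{\E}_x[{\sf d}_N^{\,2}(u(X_{t\land\tau_{G_n}}),o)]\geq 2\int_0^t{\E}_x[|\d u|^2(X_s)]\,\d s\geq 2t|\d u|^2(x),
\end{align*}
which is the claim. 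The main delicate point I expect is step three: carefully justifying the interchange of supremum (over $n$) with the time integral and the expectation. This requires both the conservativeness supplied by {\bf(B3)} and the quantitative control on $|\d u|^2(X_s)$ coming from {\bf(G1)} via Lemma~\ref{lem:lineargrowth}---exactly the ingredients that underlie the uniform integrability argument in the proof of Lemma~\ref{lem:subharmograd}---so that the limit in the stopping time does not produce any loss.
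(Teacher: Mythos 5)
Your proposal is correct and follows essentially the same route as the paper's proof: It\^o's formula stopped at $\tau_{G_n}$ with the zero-mean martingale term, Lemma~\ref{lem:LapalcianDistaHarm} to bound $\Delta_V{\sf d}_N^{\,2}(u,o)$ from below by $2|\d u|^2$, monotone convergence justified by conservativeness (Corollary~\ref{cor:conservative} and Theorem~\ref{thm:tight}) to remove the stopping times, and Lemma~\ref{lem:subharmograd} to obtain $\E_x[|\d u|^2(X_s)]\geq|\d u|^2(x)$. The only cosmetic difference is the direction of the chain of inequalities (the paper starts from $2t|\d u|^2(x)$ and works upward, also retaining the term $-{\sf d}_N^{\,2}(u(x),o)$ that you discard).
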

\begin{proof}[\bf Proof]
It\^o's formula \eqref{eq:Ito} tells us that there exists a local  martingale $m_t=m_t^{{\color{black}{{\sf d}}}_N^{\,2}(u,o)}$ 
such that
\begin{align*}
{\color{black}{{\sf d}}}_N^{\,2}(u(X_t),o)-{\color{black}{{\sf d}}}_N^{\,2}(u(X_0),o)=m_t+\int_0^t\Delta_V {\color{black}{{\sf d}}}_N^{\,2}(u(X_s),o)\d s,\qquad t\in\,[\,0,\,\infty\,[
\end{align*}
holds $\P_x$-a.s.~for all $x\in M$.
In fact $m_t$ is given by
\begin{align*}
m_t:=\sqrt{2}\int_0^t\left\langle U_s^{-1}\nabla {\color{black}{{\sf d}}}_N^{\,2}(u(X_s),o),\d B_s\right\rangle.
\end{align*}
Let $G$ be a relatively compact open set. 
By $\E_x[m_{t\land\tau_G}]=0$, we have
\begin{align}
\E_x\left[{\color{black}{{\sf d}}}_N^{\,2}(u(X_{t\land\tau_G}),o) \right]-{\color{black}{{\sf d}}}_N^{\,2}(u(x),o)=\E_x\left[\int_0^{t\land\tau_G}\Delta_V{\color{black}{{\sf d}}}_N^{\,2}(u,o)(X_s)\d s\right].\label{eq:EqBoth}
\end{align}
By \eqref{eq:sumartingale}, {\color{black}{Corollary}}~\ref{cor:conservative} and Theorem~\ref{thm:tight}, 
we can deduce that 
\begin{align*}
2t
|\d u|^2(x)
&\leq 2\int_0^t\E_x
[|\d u|^2(X_s)]\d s\\
&=2{\E}_x
\left[\int_0^t|\d u|^2(X_s)\d s\right]\\
&=2\sup_{n\in\N}\E_x
\left[\int_0^{t\land\tau_{G_n}}|\d u|^2(X_s)\d s \right]\\
&\leq\sup_{n\in\N}\E_x
\left[\int_0^{t\land\tau_{G_n}}\Delta_V{\color{black}{{\sf d}}}_N^{\,2}(u,o)(X_s)\d s \right]\quad \text{(by Lemma~\ref{lem:LapalcianDistaHarm})}\\
&\leq \sup_{n\in\N}\E_x
[{\color{black}{{\sf d}}}_N^{\,2}(u(X_{t\land\tau_{G_n}}),o)]-{\color{black}{{\sf d}}}_N^{\,2}(u(x),o)
\quad (\text{by \eqref{eq:EqBoth}})
\end{align*} 
for all $x\in M$. 
\end{proof}
\section{Proof of Theorem~\ref{thm:Liouville1}}
\begin{proof}[\bf Proof of Theorem~\ref{thm:Liouville1}]
Under ${\rm Ric}_V^m\geq0$, for each $i=1,2,3$, the condition $\text{\bf (A\text{\boldmath$i$})}$ implies $\text{\bf (B\text{\boldmath$i$})}$ by Theorem~\ref{thm:(A)0}.  
Suppose that $u:M\to N$ is a $V$-harmonic map having growth conditions. 
Since $o(\sqrt{a})$ (resp.~$o(\sqrt{\log a})$) implies $o(\sqrt{a-\log(1+a)})$ 
(resp.~$o(\sqrt{\log(1+a^2)}$), 
for any $\eps>0$, there exists $A>0$ such that for any $a>A$, 
\begin{align}
m_{{\color{black}{{\sf d}}}_N^{\,2}(u,o)}(a)\leq\eps \left\{\begin{array}{cl} a & \text{ under {\bf (G1)}}, \\ 
\sqrt{a-\log(1+a)} & \text{ under {\bf (G2)}}, 
 \\ \sqrt{\log(1+a^2)} & \text{ under {\bf (G3)}}. \end{array}\right.\label{eq:sublinearet}
\end{align}
Moreover, for $a\leq A$,
$m_{{\color{black}{{\sf d}}}_N^{\,2}(u,o)}(a)\leq m_{{\color{black}{{\sf d}}}_N^{\,2}(u,o)}(A)$. 
We find that 
\begin{align*}
{\bf E}_x
\left[{\color{black}{{\sf d}}}_N^{\,2}(u(X_{t\land\tau_{G_n}}),o)\right]&={\bf E}_x
\left[{\color{black}{{\sf d}}}_N^{\,2}
(u(X_{t\land\tau_{G_n}}),o):r_p(X_{t\land\tau_{G_n}})\leq A\right]\\&\hspace{0.3cm}
+{\bf E}_x
\left[{\color{black}{{\sf d}}}_N^{\,2}
(u(X_{t\land\tau_{G_n}}),o):r_p(X_{t\land\tau_{G_n}})>A\right]\\
&\leq \sup_{z\in B_A(p)}{\color{black}{{\sf d}}}_N^{\,2}(u(z),o)\\&\hspace{1cm}+{\bf E}_x
\left[{\color{black}{{\sf d}}}_N^{\,2}
(u(X_{t\land\tau_{G_n}}),o):r_p(X_{t\land\tau_{G_n}})>A\right]\\
&\hspace{-0.1cm}\stackrel{\eqref{eq:sublinearet}}{\leq} \sup_{z\in B_A(p)}{\color{black}{{\sf d}}}_N^{\,2}(u(z),o)\\
&\hspace{0.4cm}+\eps^2
\left\{\begin{array}{lc}{\bf E}_x
\left[r_p^2(X_{t\land\tau_{G_n}})\right] & \text{ under }\text{\bf (G1)}, \\ 
{\bf E}_x
\left[r_p(X_{t\land\tau_{G_n}})-\log(1+r_p(X_{t\land\tau_{G_n}}))\right]
 & \text{ under }\text{\bf (G2)}, \\ {\bf E}_x
 \left[\log(1+r_p^2(X_{t\land\tau_{G_n}}))\right]& \text{ under }\text{\bf (G3)}\end{array}\right.
\\
&\leq \sup_{z\in B_A(p)}{\color{black}{{\sf d}}}_N^{\,2}(u(z),o)\\
&\hspace{0.4cm}+\eps^2
\left\{\begin{array}{lc}r_p^2(x)+2(1+D)t & \text{ under }\text{\bf (B1)}, \\ 
r_p(x)-\log(1+r_p(x))+(1+D)t
 & \text{ under }\text{\bf (B2)}, \\ 
 \log(1+r_p^2(x))+2(1+D)t& \text{ under }\text{\bf (B3)}\end{array}\right.
\end{align*}
for all $x\in M$. 
In the last inequality, we use Lemma~\ref{lem:squaredistance*}. 
By Lemma~\ref{lem:growthEst}, 
\begin{align}
2t|\d u|^2(x)
&\leq \sup_{n\in\N} 
{\bf E}_x
\left[{\color{black}{{\sf d}}}_N^{\,2}(u(X_{t\land\tau_{G_n}}),o)\right]\notag\\
&\leq
\sup_{z\in B_A(p)}{\color{black}{{\sf d}}}_N^{\,2}(u(z),o)+\eps^2(r_p^2(x)+2(1+D)t)\label{eq:UL}
\end{align} 
for all $x\in M$ 
under {\bf(B{\boldmath$i$})} and the growth condition {\bf(G{\boldmath$i$})} for each $i=1,2,3$.  
Dividing \eqref{eq:UL} by $t$ and letting $t\to\infty$, 
we see 
\begin{align*}
2|\d u|^2(x)\leq 2\eps^2(1+D)
\end{align*}
for all $x\in M$. Since $\eps>0$ is arbitrary, we have 
$|\d u|^2(x)=0$ for all $x\in M$.
\end{proof}
\begin{remark}\label{rem:relaxation}
{\rm By Remark~\ref{rem:simplyconnected}, the conclusion of Theorem~\ref{thm:Liouville1} 
under {\bf(A{\boldmath$i$})} and {\bf(G{\boldmath$i$})} for each $i=1,2,3$ 
also holds if
${\rm Ric}_V^m\geq0$, ${\rm Sect}_N\leq0$ and $u:M\to N$ is a $V$-harmonic map satisfying 
${\rm Im}(u)\cap {\rm Cut}(o)=\emptyset$ for some point $o\in N$.
}
\end{remark}
\begin{remark}
{\rm In Theorem~\ref{thm:Liouville1}, under {\bf(A{\boldmath$i$})} and {\bf(G{\boldmath$i$})} for each $i=1,2,3$, 
if further the symmetric matrix $({\rm Hess}\,u(e_i,e_j))$ for the $V$-harmonic map $u$ 
admits a null eigenvalue with multiplicity $k\in\{1,2,\cdots,$ $ n-1\}$ independent of the choice of fibre, then the same conclusion holds
under ${\Ric}_V^m\geq0$ with $m\in]-\infty,\,k\,]$ and ${\rm Sect}_N\leq0$ with 
$\pi_1(N)=\{e\}$.
}
\end{remark}

\section{Proofs of Theorem~\ref{thm:Liouville2}, Corollaries~\ref{cor:ChenJostQiu}, \ref{cor:New} 
and Theorem ~\ref{thm:Liouville3}
}\label{sec:ProofsOfThmsCors}
Our proof relies on the convex geometry on the regular geodesic ball on complete Rimannian manifolds with positive upper curvature, first introduced by Kendall~\cite{Kend:probconvI,Kend:hemisphere}. In our situation, more precisely, there exists a convex function $\Phi: B_R(o) \times B_R(o) \rightarrow \R$, such that for any geodesic $\gamma$ in $B_R(o) \times B_R(o)$, there exists a positive constant $C$,
\begin{align}\label{convexity.func}
\frac{\d^2}{\d t^2}\Phi(\gamma(t)) \geq C\left(\frac{\d}{\d t}\Phi(\gamma(t))  \right)^{2}.
\end{align}
Moreover, $\Phi$ is a nonnegative and bounded function. It is also symmetric $\Phi(x, y) = \Phi(y, x)$, and vanishes only on its diagonal, i.e. $\Phi(x, y) = 0$ if and only if $x=y$. For the concrete construction of $\Phi(x, y)$, see \cite{Kend:hemisphere}.

\begin{lem}\label{lem:submartingaleProperty}
Fix $q \in B_R(o)$, then $\Phi(u(x),q)$ can be considered as a function on $M$.  Moreover, we have
\begin{align}\label{subharmonic}
\Delta_V\Phi( u(x),q) \geq 0.
\end{align}
\end{lem}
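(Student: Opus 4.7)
The plan is to reduce the subharmonicity of $\Phi(u(\,\cdot\,),q)$ on $M$ to the geodesic convexity of $\Phi(\,\cdot\,,q)$ on the target, and then to use the composition formula for the tension field together with the $V$-harmonicity of $u$.

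First, fix $q\in B_R(o)$ and set $\psi:=\Phi(\,\cdot\,,q)\in C^{\infty}(B_R(o))$. I would observe that for any geodesic $\eta:I\to B_R(o)$, the curve $t\mapsto(\eta(t),q)$ is a geodesic in the Riemannian product $B_R(o)\times B_R(o)$ (the constant factor has zero covariant acceleration). Applying \eqref{convexity.func} to this product geodesic, and noting that along a geodesic $\eta$ one has $\frac{d^2}{dt^2}\psi(\eta(t))={\rm Hess}^N\psi(\dot\eta,\dot\eta)$ and $\frac{d}{dt}\psi(\eta(t))=d\psi(\dot\eta)$, we get
\begin{align*}
{\rm Hess}^N\psi(v,v)\geq C\bigl(d\psi(v)\bigr)^2\geq 0\qquad\text{for every }v\in T_yN,\ y\in B_R(o).
\end{align*}
Hence $\psi$ is a nonnegative-Hessian (i.e., geodesically convex) function on $B_R(o)$.

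Next, I would apply the standard composition formula for the Laplacian of a function composed with a smooth map. Choosing a local orthonormal frame $\{e_i\}_{i=1}^n$ on $M$ and using \eqref{eq:Hessian} (with $\psi$ in place of ${\sf d}_N^{\,2}(\cdot,o)$), taking a trace gives
\begin{align*}
\Delta(\psi\circ u)=\sum_{i=1}^n{\rm Hess}^N\psi(du(e_i),du(e_i))+d\psi(\tau(u)).
\end{align*}
Subtracting $\langle V,\nabla(\psi\circ u)\rangle=d\psi(du(V))$ from both sides yields
\begin{align*}
\Delta_V(\psi\circ u)=\sum_{i=1}^n{\rm Hess}^N\psi(du(e_i),du(e_i))+d\psi(\tau_V(u)).
\end{align*}

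Finally, since $u$ is $V$-harmonic, $\tau_V(u)=0$, so the second term vanishes, and by the pointwise inequality ${\rm Hess}^N\psi\geq 0$ established above every summand of the first term is nonnegative. Therefore $\Delta_V\Phi(u(\,\cdot\,),q)=\Delta_V(\psi\circ u)\geq 0$ on $M$, as required. There is no real obstacle here; the only subtle point is the identification of geodesics in $B_R(o)\times B_R(o)$ with one factor frozen, which is precisely what converts Kendall's product-convexity \eqref{convexity.func} into fibrewise convexity of $\psi$.
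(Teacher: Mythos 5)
Your proposal is correct and follows essentially the same route as the paper: both reduce the claim to the fibrewise convexity ${\rm Hess}^N\Phi(\cdot,q)(v,v)\geq C\langle\nabla^N\Phi(\cdot,q),v\rangle^2\geq 0$ extracted from \eqref{convexity.func} along geodesics with the second argument frozen, and then trace this against $\d u(e_i)$ via the composition formula, with the $\d\psi(\tau_V(u))$ term vanishing by $V$-harmonicity. The only difference is that you spell out the chain-rule derivation of $\Delta_V(\psi\circ u)$ and the identification of $(\eta(t),q)$ as a product geodesic, which the paper leaves implicit.
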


\begin{proof}[\bf Proof]
Recall that for harmonic map $u$, we have
\begin{align}\label{eqDelta}
\Delta_V\Phi(u(x),q) = \sum^{n}_{i=1} {\rm Hess}^{N} \Phi(\cdot,q)
(\d u(e_{i}), \d u(e_{i})).
\end{align}
Here ${\rm Hess}^{N}\Phi(\cdot,q)$ is the Hessian of $\Phi(\cdot,q)$ with respect to 
$(N,h)$. 
Notice that for a geodesic $\gamma(t) \in B_R(o)$, we have
$$
\left.\frac{\d^2}{\d t^2}\right|_{t = t_{0}}\Phi(\gamma(t),q)  =  {\rm Hess}^{N} \Phi(\cdot,q)
(\dot{\gamma}(t_{0}), \dot{\gamma}(t_{0})).
$$
Now we consider the geodesic $\gamma(t)$,  such that for $\gamma(t_{0}) = u(x)$ and $\dot{\gamma}(t_{0}) = X$, where $X \in T_{u(x)}N$.  Then,
\begin{align*}
\left.\frac{\d^2}{\d t^2}\right|_{t = t_{0}}\Phi(\gamma(t),q)  &=  {\rm Hess}^{N} \Phi(\cdot,q)(X, X), \\
\left.\frac{\d}{\d t}\right|_{t = t_{0}}\Phi(\gamma(t),q)  &= \langle \nabla^{N} \Phi(\cdot,q), X\rangle.
\end{align*}
Therefore by \eqref{convexity.func} and \eqref{eqDelta}, we derive that
\begin{align*}
\Delta_V\Phi(u(x),q) \geq C\sum^{n}_{i=1} \langle \nabla^{N} \Phi(u(x),q), \d u(e_{i}) \rangle^{2} \geq 0.
\end{align*}

\end{proof}

Now we are in the position to prove Theorem~\ref{thm:Liouville2}.

\begin{proof}[\bf Proof of Theorem~\ref{thm:Liouville2}]
Recall that ${\bf X}^V$ is conservative by {\color{black}{Corollary}}~\ref{cor:conservative}. 
According to \eqref{subharmonic}, we know that $\Phi(u,q)$ can be seen as a bounded $V$-subharmonic function on $M$. Then it suffice to prove a special case of Theorem~\ref{thm:Liouville1}, with $N = \R$ and $v(x) := \Phi(u(x),q)$ is a non-negative, bounded $V$-subharmonic function on $M$.

 As before, let $X_{t}$ be the diffusion on $M$ whose generator is $\Delta_{V}$ and  with initial position $X_{0} = x$. Now apply It\^o's formula for $v(X_{t})$, 
 we derive that
\begin{align*}
m_t:=v(X_{t}) - v(X_{0})-\int^{t}_{0}\Delta_{V}v(X_{s})\d s
\end{align*}
is a local (${\P}_x$-)martingale. 
Thus, by Lemma~\ref{lem:submartingaleProperty} $v(X_{t})$ is a non-negative, bounded, local submartingale. 
Let  $\{G_n\}$ the increasing sequence of relatively compact open sets satisfying $\overline{G_n}\subset G_{n+1}\subset M$ for $n\in\N$ and 
$M=\bigcup_{n=1}^{\infty}G_n$.
Then $n \mapsto v(X_{\tau_{G_{n}}})$ is a discrete time submartingale. By the martingale convergence theorem, there exists a unique limit of $v(X_{\tau_{G_{n}}})$, $\P_{x}$-a.s.

Now let ${\rm Im }(u) \subset B_R(o)$ be the image of $u$ and we may assume that it is a compact subset of $B_R(o)$.  Then for each $\omega \in \Omega$, there exists a subsequence $n_k=n_k(\omega)$ such that the limit
$$
\lim_{k \to \infty} u(X_{\tau_{G_{n_k}}}(\omega))=L(\omega)\in B_R(o) \subset N
$$
exists.

Therefore, we may conclude that for each $\omega \in\Omega$,
$$
\lim_{n \to\infty} v(X_{\tau_{G_{n}}}(\omega)) = \lim_{k \to\infty}\Phi(u(X_{\tau_{G_{n_k}}}(\omega)),q) = \Phi(L(\omega),q).
$$
Moreover, 
we also have
$$
\lim_{n \to\infty} v(X_{\tau_{G_{n}}} \circ \theta_t\,)  =  \Phi( L \circ \theta_t,q).
$$
On the other hand, for large $n\in\N$, $t<\tau_{G_n}$, hence $t+\tau_{G_n}\circ\theta_t=\tau_{G_n}$ 
$\P_x$-a.s. so that
$$
\lim_{n \to\infty} v(X_{\tau_{G_{n}}}) \circ \theta_t  = \lim_{n \to\infty} v(X_{t+\tau_{G_{n}}\circ\,\theta_t} )=\lim_{n\to\infty}v(X_{\tau_{G_n}})
=  \Phi( L ,q)
$$
holds $\P_x$-a.s.~for all $x\in M$. Thus
$$
 \Phi(L  \circ \theta_t,q) = \Phi( L ,q)
$$
holds for 
any $q \in B_R(o)$. Taking $q = L(\omega)$, we have
$$
\Phi(L  \circ \theta_t(\omega),L(\omega))=0\qquad\text{ for }\qquad t\in[\,0,+\infty\,[
$$
implying that $L  \circ \theta_t(\omega)=L(\omega)$. Therefore $L \in \bigcap_{0 < t < \infty} \mathscr{I}_{t}$, where $\mathscr{I}_{t}$ is the invariant $\sigma$-field of $\theta_{t}$.
We claim that $\mathscr{I}$ is trivial, leading to that $L$ is constant $L\equiv L_{\infty}$, $\P_{x}$-a.s..

To prove the triviality of $\mathscr{I}$, first notice that for any $A \in \mathscr{I}$, we have $\P_{x}(A\,|\,\mathscr{F}_t) = \P_{X_{t}}(A)$, 
meaning that the function $x \mapsto \P_{x}(A)$ is finely $V$-harmonic on $M$, i.e.
$t\mapsto \P_{X_t}(A)$ is a local (${\P}_x$-)martingale. 
Indeed, for any increasing sequence $\{G_n\}$ of relatively compact open sets satisfying $\overline{G_n}\subset G_{n+1}\subset M$ for $n\in \N$ and $M=\bigcup_{n=1}^{\infty}G_n$, 
$Y_t:=\P_{X_{t\land\tau_{G_n}}}(A)$ is a bounded martingale: for $s<t$
\begin{align*}
\E_x[Y_{t}\,|\,\mathscr{F}_s]&
=\E_x\left[\P_{X_{t\land \tau_{G_n}}}(A)\,\left|\,\mathscr{F}_s \right]\right.\\
&=\E_x\left[\P_x(A\,|\,\mathscr{F}_{t\land\tau_{G_n}})\,\left|\,\mathscr{F}_s \right]\right.\\
&=\P_x(A\,|\,\mathscr{F}_{s\land\tau_{G_n}})=Y_{s}.
\end{align*}
Meanwhile by taking $N = \R$ in Theorems 3.1 and 3.2 in Kendall's paper~\cite{Kend:probconvII}, we see that $x \mapsto \P_{x}(A)$ is in fact smooth on $M$. Indeed, $\P_{x}(A)$ is finely $V$-harmonic and the vector field $V$ is of course locally bounded. It is worth to point out that in the proof of \cite[Theorem 3.1]{Kend:probconvII}, we do not need  to assume the continuity of the map.

Therefore, the local (${\P}_x$-)martingale property and the smoothness  give rise to that $\P_{x}(A)$ is a $V$-harmonic function. By Theorem~\ref{thm:Liouville1}, we derive that $\E_{x}[\1_{A}] \equiv c$ for some constant $0 \leq c \leq 1$.

On the other hand, we have $c^{2} = c$. To see this, the invariance of $A$ and Markov property give rise to
\begin{align*}
c &= \E_{x}[\1_{A}] =  \E_{x}[ \E_{x}[\1^{2}_{A}\,|\, \mathscr{I}\,]\,] =  \E_{x}[\1_{A} \E_{x}[\1_{A}\,|\, \mathscr{I}\,]\,]  =  \E_{x}[\1_{A} \lim_{t \to \infty}\E_{x}[\1_{A}\,|\, \mathscr{I}_{t}\,]\,]\\
&=  \E_{x}[\1_{A} \lim_{t \to \infty}\E_{x}[\1_{A} \circ \theta_{t}\,|\, \mathscr{I}_{t}\,]\,] = \E_{x}[\1_{A} \lim_{t \to \infty}\E_{X_{t}}[\1_{A}]\,] = c^{2},
\end{align*}
implying that either $c = 0$ or $c=1$.

Thus,
\begin{align*}
\lim_{n \to\infty}\Phi(u(X_{\tau_{G_n}}),L_{\infty}) =\Phi(L(\omega),L_{\infty})=0.
\end{align*}

Consequently, submartingale property of $n\mapsto \Phi(u(X_{\tau_{G_n}}),L_{\infty})$ gives rise to
\begin{align*}
\Phi( u(x),L_{\infty})\leq \lim_{n \rightarrow \infty }\E_x\left[\Phi( u(X_{\tau_{G_n}}),L_{\infty}) \right] = 0,
\end{align*}
leading to  $\Phi( u(x),L_{\infty}) = 0$, $\P_x$-a.s.~for all $x\in M$, since $\Phi(x,y)$ is nonnegative. Therefore by the vanishing property, we have
$$
u \equiv L_{\infty}.
$$
\end{proof}
\begin{proof}[\bf Proof of Corollary~\ref{cor:ChenJostQiu}]
By Remark~\ref{rem:LaplacianCompaInfinity}, ${\rm Ric}_V^{\infty}\geq0$ implies $\text{\bf (B3)}$ and 
Theorem~\ref{thm:Liouville2} remains valid by replacing $\text{\bf (A3)}$ with $\text{\bf (B3)}$. 
Then we obtain the conclusion. 
\end{proof}

\begin{proof}[\bf Proof of Corollary~\ref{cor:New}]
By Corollary~\ref{cor:Vbounded}, 
${\rm Ric}_V^m\geq0$ with the boundedness of $V$ implies $\text{\bf (B3)}$ and 
Theorem~\ref{thm:Liouville2} remains valid by replacing $\text{\bf (A3)}$ with $\text{\bf (B3)}$. 
Then we obtain the conclusion. 
\end{proof}

To prove Theorem~\ref{thm:Liouville3}, we need the following:
\begin{lem}\label{lem:recurrent}
Suppose that the $\Delta_V$-diffusion process ${\bf X}^{V}=(\Omega,X_t,{\P}_x)$ is recurrent. Then every bounded finely $V$-subharmonic {\rm(}or $V$-superharmonic{\rm)} function is constant. In particular, every bounded smooth $V$-subharmonic {\rm(}or $V$-superharmonic{\rm)} function is constant.
Conversely, if every bounded finely $V$-subharmonic function is constant, then ${\bf X}^{V}$ is recurrent.
\end{lem}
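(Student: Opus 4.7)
The plan is to pass between ``bounded finely $V$-subharmonic'' and ``bounded excessive'' via a simple reflection, and then invoke the definitions on either side.

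For the forward implication, let $u$ be a bounded finely $V$-subharmonic function, set $c:=\sup_{M}u$, and put $v:=c-u$. Then $v\geq 0$ is bounded and finely continuous, and since $s\mapsto u(X_s)$ is a local $({\P}_x\text{-})$submartingale on $[\,0,\zeta\,[$, the process $s\mapsto v(X_s)$ is a local $({\P}_x\text{-})$supermartingale on $[\,0,\zeta\,[$. I will verify that $v$ is excessive in the sense of the paper, whereupon recurrence of ${\bf X}^V$ gives $v\equiv$ constant and hence $u\equiv$ constant. The finely $V$-superharmonic case reduces by considering $-u$. The ``in particular'' assertion is then immediate: for $u\in C^2(M)$ bounded with $\Delta_V u\geq 0$, the localized It\^o decomposition recorded in Section~\ref{sec:diffusion} writes $s\mapsto u(X_s)$ on $[\,0,\zeta\,[$ as a local martingale plus a non-decreasing process, so $u$ is finely $V$-subharmonic.

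The excessivity of $v$ has two ingredients. Pick a localizing sequence $T_k\nearrow\zeta$ such that $s\mapsto v(X_{s\wedge T_k})$ is a bona fide supermartingale (available because $v$ is bounded). Using $\{t<\zeta\}=\bigcup_k\{t<T_k\}$ together with $v\geq 0$ to drop the indicator,
\begin{align*}
{\E}_x[v(X_t):t<\zeta]=\lim_{k\to\infty}{\E}_x[v(X_{t\wedge T_k}):t<T_k]\leq \lim_{k\to\infty}{\E}_x[v(X_{t\wedge T_k})]\leq v(x).
\end{align*}
For the limit at $t=0$, fine continuity of $v$ together with $\P_x(\zeta>0)=1$ gives $v(X_t)\1_{\{t<\zeta\}}\to v(x)$ $\P_x$-a.s.~as $t\downarrow 0$, and dominated convergence (via boundedness of $v$) yields $\lim_{t\downarrow 0}{\E}_x[v(X_t):t<\zeta]=v(x)$. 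These together are precisely excessivity.

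For the converse, assume every bounded finely $V$-subharmonic function is constant and let $g$ be an arbitrary excessive function. I truncate to $g_N:=g\wedge N$, $N>0$. Since constants are excessive and the minimum of two excessive functions is excessive, $g_N$ is bounded excessive; standard potential theory (via Meyer's hypothesis (L), already verified for ${\bf X}^V$ during the proof of Lemma~\ref{lem:noinsidekilling}) guarantees that $g_N$ is finely continuous. The supermartingale property of $s\mapsto g_N(X_s)$ on $[\,0,\zeta\,[$ means $-g_N$ is finely $V$-subharmonic, and $-g_N\in[-N,0]$ is bounded. By hypothesis $g_N\equiv c_N$ is constant, and a short dichotomy as $N\to\infty$ (either $g\equiv+\infty$, or $g(x_0)<\infty$ for some $x_0$, in which case taking $N>g(x_0)$ forces $c_N=g(x_0)$ and then $g(y)=c_N$ for every $y$) identifies $g$ as constant. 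Hence ${\bf X}^V$ is recurrent.

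I expect the main obstacle to be the careful localization in the excessivity check: upgrading the local supermartingale structure of $s\mapsto v(X_s)$ on $[\,0,\zeta\,[$ to the integral inequality ${\E}_x[v(X_t):t<\zeta]\leq v(x)$ relies on both boundedness (to produce a true supermartingale after stopping at $T_k$) and nonnegativity (to remove the indicator $\1_{\{t<T_k\}}$). The remaining ingredients — right-continuity at $t=0$, the sign flip for the superharmonic case, and the truncation step in the converse — are then routine.
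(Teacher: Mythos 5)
Your proposal is correct and follows essentially the same route as the paper's proof: reflect a bounded finely $V$-subharmonic function to a non-negative bounded finely $V$-superharmonic one, check excessivity via a localized supermartingale inequality (the paper localizes with the exhaustion times $\tau_{G_n}$ where you use a generic sequence $T_k$, and you spell out the $t\downarrow 0$ limit that the paper leaves implicit), and for the converse truncate a general excessive function to a bounded one, identify it as finely $V$-superharmonic, and conclude constancy by the same dichotomy the paper phrases as a two-point contradiction.
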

\begin{proof}[\bf Proof]
Suppose that ${\bf X}^{V}$ is recurrent.
The recurrence of ${\bf X}^{V}$ implies ${\P}_x(\zeta=+\infty)=1$ for $x\in M$ (see \cite[(3.4) Lemma]{Get:TranRec}).
First note that any smooth bounded $V$-subharmonic function is finely $V$-subharmonic. This is easily confirmed by way of It\^o's formula.
Let $f$ be a bounded finely $V$-subharmonic (resp.~$V$-superharmonic) function, then $g:=\|f\|_{\infty}-f$ (resp.~$g=\|f\|_{\infty}+f$) is a non-negative
finely $V$-superharmonic function.
Let $\{G_n\}$ be the increasing sequence of relatively compact open sets 
satisfying $\overline{G_n}\subset G_{n+1}\subset M$ for $n\in N$ and $M=\bigcup_{n=1}^{\infty}G_n$. 
Then $t\mapsto g(X_{t\land\tau_{G_n}})$ is a bounded supermartingle, hence ${\E}_x[g(X_t):t<\tau_{G_n}]\leq g(x)$ for any $t>0$ and $x\in M$.
Letting $k\to\infty$, we have ${\E}_x[g(X_t)]\leq g(x)$ for any $t>0$ and $x\in M$.
Since $g$ is finely continuous, $g$ is an excessive function with respect to ${\bf X}^{V}$. Therefore we obtain the first conclusion. Conversely suppose that
any bounded finely $V$-subharmonic function is constant.
Let $g$ be a bounded excessive function with respect to ${\bf X}^{V}$.
Then
$\E_x[g(X_{t\land \tau_{G_n}})]\leq g(x)$ for all $t>0$ and $x\in M$ by
\cite[Chapter II, (2.8) Proposition]{BG:Markov}.
From this, we can deduce that
$t\mapsto g(X_{t\land \tau_{G_n}})$ is a bounded supermartingale, that is,
$g$ is finely $V$-superharmonic.
By assumption, $g$ is constant. For general excessive function $g$,
for each $\ell>0$, $g\land \ell$ is a bounded excessive function and it is a constant. If $g$ takes different values $g(x_1)$ and $g(x_2)$ with $g(x_1)<g(x_2)$, then $g(x)\land\left(\frac{g(x_1)+g(x_2)}{2}\right)$ has the constant value $c$.
Then one can obtain $c=g(x_1)=\frac{g(x_1)+g(x_2)}{2}$ so that $g(x_1)=g(x_2)$.
This is a contradiction. Therefore $g$ is constant, hence ${\bf X}^{V}$ is recurrent.
\end{proof}
\begin{proof}[\bf Proof of Theorem~\ref{thm:Liouville3}]
By Lemma~\ref{lem:recurrent}, $\Phi(u,o)$ is constant.
Thanks to the concrete expression of $\Phi$ shown in \cite[Theorem~3]{Kend:hemisphere}, ${\color{black}{{\sf d}}}_N(u,o)$ is constant. Then
one can conclude that $u(M)\subset \partial B_r(o)\subset B_R(o)$ for some $r>0$.
Fix an $x_0\in M$.
Since $B_R(o)\cap {\rm Cut}(o)=\emptyset$, $u(x_0)$ and $o$ can be joined by a unique minimal geodesic in $B_R(o)$. On this geodesic arc, we can choose a point $\hat{o}\in B_R(o)$ which is different from $o$ such that we can find another geodesic ball $B_{\hat{r}}(\hat{o})$ satisfying $B_r(o)\subset B_{\hat{r}}(\hat{o})\subset B_R(o)$.
Applying Lemma~\ref{lem:submartingaleProperty} to  $\Phi(u,\hat{o})$, it is constant
so that ${\color{black}{{\sf d}}}_N(u,\hat{o})\equiv \tilde{r}$, that is, $u(M)\subset \partial B_{\tilde{r}}(\hat{o})$ for some $\tilde{r}>0$.  Then we see
$\tilde{r}={\color{black}{{\sf d}}}_N(u(x_0),\hat{o})=r-{\color{black}{{\sf d}}}_N(o,\hat{o})$. Obviously, $u(x_0)\in \partial B_r(o)\cap\partial B_{\tilde{r}}(\hat{o})$. Suppose there exists a different
$u(x_1)\ne u(x_0)$ such that $u(x_1)\in \partial B_r(o)\cap\partial B_{\tilde{r}}(\hat{o})$. Since $u(x_1)\ne u(x_0)$, $u(x_1)$ does not lie on the minimal geodesic joining $o$ and $u(x_0)$. Then $r={\color{black}{{\sf d}}}_N(u(x_1),o)<{\color{black}{{\sf d}}}_N(u(x_1),\hat{o})+{\color{black}{{\sf d}}}_N(\hat{o},o)=
\tilde{r}+{\color{black}{{\sf d}}}_N(\hat{o},o)$. This is a contradiction. Hence $\partial B_r(o)\cap\partial B_{\tilde{r}}(\hat{o})=\{u(x_0)\}$,
consequently, $u(M)\subset \partial B_r(o)\cap\partial B_{\tilde{r}}(\hat{o})=\{u(x_0)\}$ implies $u(M)=\{u(x_0)\}$.
\end{proof}

\providecommand{\bysame}{\leavevmode\hbox to3em{\hrulefill}\thinspace}
\providecommand{\MR}{\relax\ifhmode\unskip\space\fi MR }
\providecommand{\MRhref}[2]{%
  \href{http://www.ams.org/mathscinet-getitem?mr=#1}{#2}
}
\providecommand{\href}[2]{#2}

\end{document}